\documentclass[reqno,12pt]{amsart}
\usepackage{amsmath,amssymb,epsfig,color,here,wasysym,stmaryrd,slashbox}

\textwidth 15.6cm
\textheight 22.4cm
\hoffset-1.7cm \voffset-.5cm

\numberwithin{equation}{section}

\newtheorem{theorem}{Theorem}[section]

\newtheorem{lemma}[theorem]{Lemma}
\newtheorem{proposition}[theorem]{Proposition}
\newtheorem{definition}[theorem]{Definition}
\newtheorem{corollary}[theorem]{Corollary}

\theoremstyle{remark}
\newtheorem{example}[theorem]{Example}

\newcommand{\di}{\displaystyle}

\newcommand{\vanish}[1]{}
\newcommand{\Sym}[1]{\operatorname{Sym}\left( #1\right)}

\begin{document}

\title{Counting partitions of a fixed genus}

\author[Robert Cori and G\'abor Hetyei]{Robert Cori \and G\'abor Hetyei}

\address{Labri, Universit\'e Bordeaux 1, 33405 Talence Cedex, France.
\hfill\break
WWW: \tt http://www.labri.fr/perso/cori/.}

\address{Department of Mathematics and Statistics,
  UNC-Charlotte, Charlotte NC 28223-0001.
WWW: \tt http://www.math.uncc.edu/\~{}ghetyei/.}

\date{\today}
\subjclass [2010]{Primary 05C30; Secondary 05C10, 05C15}

\keywords {set partitions, noncrossing partitions, genus of a hypermap}

\begin{abstract}
We show that, for any fixed genus $g$, the ordinary generating function
for the genus $g$ partitions of an $n$-element set into $k$ blocks is 
algebraic. The proof involves showing that each such partition may be
reduced in a unique way to a primitive partition and that the number of
primitive partitions of a given genus is finite. We illustrate our
method by finding the generating function for genus $2$ partitions,
after identifying all genus $2$ primitive partitions, using a computer-assisted
search. 
\end{abstract}

\maketitle

\section*{Introduction}

Many recent developments in discrete mathematics concern the use of topological
concepts in order to obtain combinatorial properties of finite
objects. The combinatorial definition of the genus of graphs and their
embeddings using maps and hypermaps, allowed Jackson and
Visentin~\cite{Jackson-Visentin1, Jackson-Visentin2} to
obtain enumerative results using the theory of characters in the algebra
of the symmetric group. Their  
approach inspires to check if other combinatorial 
objects with certain values for the genus have deep  combinatorial
properties. A genus may be defined for a  partition by considering a
partition as a permutation whose cycles consists of the blocks written
in increasing order, and by using the combinatorial definition that
exists for graph embeddings. Notice, the noncrossing ones are exactly
those that have genus $0$. The noncrossing partitions first defined by
Kreweras~\cite{Kreweras} have a lot of interesting properties for which
many authors obtained remarkable results. In her M.S.\ thesis Martha
Yip showed that genus $1$ partitions have also nice combinatorial
properties, for instance a lattice structure was obtained. 

 In this paper we are interested in enumerative results. It is well
known that the number $d_{n,k}$ of genus 0 partitions on $n$ elements
containing $k$ blocks are the Narayana numbers. Their  generating power series
$$D(x,y) = \sum_{n,k \geq 0} d_{n,k} x^ny^k$$
is algebraic and is given by a rational function of  $x,y$ and
$t=\sqrt{(1-x-xy)^2-4x^2y}$.  The same statement also holds for genus
$1$ partitions: an explicit formula to this effect was conjectured by 
M.\ Yip and proved in~\cite{Cori-Hetyei}. The main result of this paper
is Theorem~\ref{thm:main}, stating that for any integer
$g \geq 0$, the generating function counting all partitions of a fixed
genus by the size of the underlying set and the number of parts is
algebraic and is given by a rational function of  $x,y$ and
$t=\sqrt{(1-x-xy)^2-4x^2y}$. We also obtain an explicit formula for
$g=2$, albeit the proof of this formula relies on computer assistance.  

We arrive at our main result after several reduction steps. First, in
Section~\ref{sec:er}, we introduce two types of {\em elementary reductions}
that do not change the genus: the removal of a fixed point and the
contraction of $i$ and $i+1$ whenever $i+1$ immediately follows $i$ in
the same part. The resulting class of reduced partitions is slightly
smaller than the one we introduced in~\cite{Cori-Hetyei} when we counted
genus $1$ partitions. The formula connecting the generating functions of
all, respectively reduced partitions is also an easy substitution rule,
involving the generating function of noncrossing partitions.
The key idea behind our main result is the notion of {\em parallel edges} and
{\em primitive partitions}, introduced in Section~\ref{sec:primitive}. When a
permutation $\alpha$ sends $i$ into $j+1$ and $j$ into $i+1$, we call the
directed edges $(i,j+1)$ and $(j,i+1)$ parallel. These may be
represented by parallel line segments in a diagram, and merging the two
parts of a partition along such a pair of parallel edges does not change
the genus.
The existence of a parallel pair of edges in a partition $\alpha$ on
$\{1,\ldots,n\}$ is equivalent to the existence of a $2$-cycle in the
permutation $\alpha^{-1}\zeta_n$ used in the definition of the genus.
We call a partition primitive if it has no parallel edges. An
easy cycle counting argument shows that for any fixed genus there are
only finitely many primitive partitions.

Our paper is organized as follows:
In the first three  sections we give the notation used in the paper then we give precise description  on the way
to obtain a reduced partition from a general one. We also consider the case when
the partition has blocks of size 2, and show that we can associate to any partition one with blocks of
size 2 for which the reduction process translates in a more tractable way. It is clear that
these partitions are exactly the set of fixed point free involutions also called matchings.

Section~\ref{sec:pcount} is devoted to the proof of a formula that allows to compute the generating function of general partitions from that of involutions.

In Section ~\ref{sec:primitive} we introduce the semi-primitive
  and primitive partitions and prove that the number of primitive
  partitions of fixed genus is finite. Hence their generating function
  is a polynomial.

In Section~\ref{sec:primred}
we show a way to count all reduced partitions that yield the same
primitive partition, after the removal of parallel edges. We obtain that
the generating function of all reduced partitions of a fixed genus is a
rational function, whose denominator is a power of $1-x^2y$. Combining
this result with the main theorem  the main
result of our paper follows immediately.

The last section containing general results on partitions of a fixed arbitrary
genus is Section~\ref{sec:extract}. Here we show how to obtain a formula
for the number of all partitions of a fixed genus, assuming we found the
generating function for the number of reduced partitions, after finding
all primitive partitions, using the results of
Section~\ref{sec:primred}. Our proposed method has two key ingredients:
first we rewrite the generating function of all partitions of a fixed
genus as a linear combination of Laurent polynomials multiplied by
negative powers of $\sqrt{(x+xy-1)^2-4x^2y}$, then we may use a formula
published by Gessel~\cite{Gessel} to extract the coefficients of the
monomials in the negative powers of $\sqrt{(x+xy-1)^2-4x^2y}$.

The remaining sections illustrate the power and limitations of our
proposed method. In Section~\ref{sec:gen1} we show that there are only
two primitive partitions of genus $1$, which may be easily found ``by
hand''. Computing the generating function counting all partitions of
genus $1$ takes only a few lines, quickly reproducing the main result
of~\cite{Cori-Hetyei}. In Section~\ref{sec:gen2} we see that the
situation for partitions of genus $2$ is already much more
complicated. There are $3032$ primitive partitions of genus $2$, which we
were able to find after proving that they all belong to one of four
types and relied on a computer on finding all partitions of each given
type. While computing the generating function of reduced partitions, one
of these types gives rise to finitely many {\em semiprimitive} partitions (see
Sections~\ref{sec:primitive} and ~\ref{sec:primred}) which we
counted manually. Implementing the method outlined in
Section~\ref{sec:extract} could be performed by hand, but the length of
the calculation would stretch the limits of a publication of reasonable
length. We relied on the assistance of Maple to find an explicit formula
for the generating function of all partitions of genus $2$. Extracting
the coefficients using Gessel's formula can be performed automatically by hand,
but the resulting formula is somewhat lengthy. The numbers we
obtain from our formulas agree with the numbers obtained by directly
counting genus $2$ partitions of up to $12$ elements, using another
computer program.

Our work raises several exciting questions. Is there a way to describe
the structure of the primitive partitions of a fixed genus, and is there
a way to count them without using a computer-assisted search? While counting
genus $2$ partitions we mostly relied on bounding the number of cycles
whose length is greater than $2$ to facilitate the work of our
computer. Perhaps there are more ways to reduce even the primitive
partitions even further. Another line of future research could involve
the study of types of generating functions that arise: these are all
linear combinations of Laurent polynomials, multiplied by a single
negative power of $\sqrt{(x+xy-1)^2-4x^2y}$. A list of some areas where
negative powers of $\sqrt{(x+xy-1)^2-4x^2y}$ occur in generating
functions was already given in~\cite{Cori-Hetyei}: these range from
counting faces in root polytopes~\cite{Hetyei-L,Simion} through counting
convex polyominoes~\cite{Gessel} to counting Jacobi
configurations~\cite{Strehl}. Finding a more compact formula for even
genus $2$ partitions depends on an ability to manipulate such
expressions more nimbly.

\section{Preliminaries}

The study of hypermaps goes back to the sixties, they serve as a tool to
encode a topological representations of a hypergraph, embedded in a
surface.  A hypermap is a pair of permutations $(\sigma,\alpha)$ on a set of
points  $\{1,2, \ldots, n\}$, such that $\sigma$ and $\alpha$ generate a
transitive subgroup of the symmetric group.

The {\em genus} $g(\sigma,\alpha)$  of a hypermap $(\sigma,\alpha)$ is
defined by the equation 
\begin{equation}
\label{eq:genusdef}
n + 2 -2g(\sigma,\alpha) = z(\sigma) + z(\alpha) + z(\alpha^{-1}
\sigma),
\end{equation}
where $z(\alpha)$ denotes the number of cycles of the permutation
$\alpha$. The genus is always a nonnegative integer, see~\cite{Jacques}.

An important special type of a hypermap is when one of the permutations
constituting it is 
$$
\zeta_n=(1,2,\ldots,n).
$$
Note that the cyclic subgroup generated by $\zeta_n$ is already
transitive, the same holds even more for the subgroup generated by
$\zeta_n$ and $\alpha$, for any permutation $\alpha$.

\begin{definition}
\label{def:permgenus}
The genus $g(\alpha)$ of a permutation $\alpha$ on the set $\{1,\ldots,n\}$
is the genus of the hypermap $(\zeta_n,\alpha)$. 
\end{definition}
As a direct consequence of (\ref{eq:genusdef}), the genus of the
permutation $\alpha$ is given by
\begin{equation}
\label{eq:permgenus}  
n + 1 -2g(\alpha) = z(\alpha) + z(\alpha^{-1} \zeta_n).
\end{equation}
Hypermaps of the form $(\zeta_n, \alpha)$ are often called
hypermonopoles (for instance in \cite{CautisJackson} or \cite{Yip}). 
To count permutations of a fixed genus, a general machinery was built by
S.~Cautis and D.~M.~Jackson~\cite{CautisJackson} and explicit
formulas were given by A.~Goupil and
G.~Schaeffer~\cite{Goupil-Schaeffer}. 

The subject of this paper is counting {\em partitions} of a fixed
genus. We call a permutation $\alpha$ a partition if in each of its
cycles, listing the elements beginning with the least element of the
cycle results in an increasing list of integers. Note that partitions of
the set $\{1,\ldots,n\}$, as defined above, are in an obvious bijection
with the the set-partitions of $\{1,\ldots,n\}$: we identify the set-partition
$P=(P_i)_{i=1,\ldots,k}$ of the set $\{1,2,\ldots n\}$ with the
permutation $\alpha_P$ having $k$ cycles, whose cycles are lists of the
elements of the blocks $P_i$, the elements are listed in increasing
order within each block. The genus of a partition $P$ is the genus of
the permutation $\alpha_P$.  

While counting partitions of a fixed genus, we will use the fact that
cyclic renumbering of the points takes partitions into partitions and it
preserves the genus.
\begin{proposition}
\label{prop:cyc}
If $\alpha$ is a partition of $\{1,\ldots,n\}$ of genus $g$, then the
same holds for $\zeta_n\alpha\zeta_n^{-1}$.  
\end{proposition}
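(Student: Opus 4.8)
The plan is to establish two things: that $\beta \df \zeta_n\alpha\zeta_n^{-1}$ is again a partition in the sense defined above, and that it has the same genus as $\alpha$. The genus statement is the quicker of the two. Since conjugation preserves cycle type, we have $z(\beta)=z(\alpha)$. For the second term in the genus formula~(\ref{eq:permgenus}), I compute
$$
\beta^{-1}\zeta_n=\zeta_n\alpha^{-1}\zeta_n^{-1}\zeta_n=\zeta_n\alpha^{-1},
$$
and observe that $\zeta_n\alpha^{-1}=\zeta_n(\alpha^{-1}\zeta_n)\zeta_n^{-1}$ is conjugate to $\alpha^{-1}\zeta_n$, hence $z(\beta^{-1}\zeta_n)=z(\alpha^{-1}\zeta_n)$. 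Substituting both equalities into~(\ref{eq:permgenus}) yields $g(\beta)=g(\alpha)$.

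The part requiring more care is showing that $\beta$ is a partition. Since $\zeta_n$ acts on points by $i\mapsto i+1$ for $i<n$ and $n\mapsto 1$, conjugation by $\zeta_n$ sends a cycle $(a_1,\ldots,a_k)$ of $\alpha$ to the cycle $(\zeta_n(a_1),\ldots,\zeta_n(a_k))$ of $\beta$; in other words each block of $\alpha$ is replaced by its image under the cyclic increment $i\mapsto i+1 \pmod n$. I would then verify the defining property of a partition --- that listing each cycle from its least element produces an increasing sequence --- by splitting into two cases according to whether the block contains the point $n$.

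If a block $B=\{b_1<\cdots<b_k\}$ does not contain $n$, then its image is $\{b_1+1<\cdots<b_k+1\}$, the relative order is preserved, and the corresponding cycle of $\beta$ is $(b_1+1,\ldots,b_k+1)$, already listed from its least element in increasing order. If instead $n\in B$, so $b_k=n$, then $\zeta_n$ sends $n$ to $1$, and the cycle of $\beta$ is $(b_1+1,\ldots,b_{k-1}+1,1)$; rotating this cyclic word to begin at its least element $1$ gives $(1,b_1+1,\ldots,b_{k-1}+1)$, which is increasing because $1<b_1+1<\cdots<b_{k-1}+1\leq n$. In both cases the defining condition holds, so $\beta$ is a partition.

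The main obstacle, such as it is, lies in the second case of the partition verification: the single point where the cyclic renumbering wraps around, sending $n$ to $1$, is exactly where one must check that the least element of the block migrates correctly and that the increasing order is restored after rotation. Everything else follows directly from the conjugation identity and the invariance of cycle counts under conjugation.
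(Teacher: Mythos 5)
Your proof is correct: the conjugation identity $\beta^{-1}\zeta_n=\zeta_n\alpha^{-1}=\zeta_n(\alpha^{-1}\zeta_n)\zeta_n^{-1}$ gives $z(\beta^{-1}\zeta_n)=z(\alpha^{-1}\zeta_n)$, which together with $z(\beta)=z(\alpha)$ preserves the genus by~(\ref{eq:permgenus}), and your two-case check of the wrap-around at $n$ correctly settles the partition property. The paper states Proposition~\ref{prop:cyc} with no proof at all, treating it as immediate, and your argument is precisely the standard verification the authors leave to the reader.
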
  

In Section~\ref{sec:gen2} we will use the following variant of
\cite[Lemma~5]{Chapuy}, shown in~\cite[Lemma 1.5]{Cori-Hetyei}.
An element $i$ of ${1,2,\ldots ,n}$ is a {\em  back point} of the permutation
$\alpha$ if $\alpha(i) < i$ and $\alpha(i)$ is not the smallest element in
its cycle (i.e., there exists $k$ such that $\alpha^k(i) <\alpha (i)$).
\begin{lemma}
\label{lem:backp}
For any  permutation $\alpha\in \Sym{n}$, 
the sum of the number of back points of the permutation $\alpha$ and the
number of those of $\alpha^{-1} \zeta_n$ is equal
to $2g(\alpha)$. 
\end{lemma}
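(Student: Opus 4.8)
The plan is to compare both sides of the claim to the genus formula (\ref{eq:permgenus}), which turns the statement into the equivalent assertion that $b(\alpha)+b(\alpha^{-1}\zeta_n)=n+1-z(\alpha)-z(\alpha^{-1}\zeta_n)$, where $b(\cdot)$ denotes the number of back points. My first step is a purely local bookkeeping identity valid for an \emph{arbitrary} $\beta\in\Sym{n}$: writing $d(\beta):=\#\{i:\beta(i)<i\}$ for the number of descents and $f(\beta)$ for the number of fixed points, I claim $b(\beta)=d(\beta)-z(\beta)+f(\beta)$. I would prove this cycle by cycle. A fixed point is neither a descent nor a back point. In a cycle of length at least $2$ with minimum $m$, the unique predecessor $\beta^{-1}(m)$ gives a descent that lands on $m$, which by definition is \emph{not} a back point, whereas every other descent of the cycle lands above the cycle minimum and hence \emph{is} a back point. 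Thus each cycle of length at least $2$ contributes exactly one descent that fails to be a back point, so $d(\beta)-b(\beta)$ equals the number of such cycles, namely $z(\beta)-f(\beta)$.

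Setting $w(\beta):=d(\beta)+f(\beta)=\#\{i:\beta(i)\le i\}$ and applying the local identity to both $\alpha$ and $\beta:=\alpha^{-1}\zeta_n$, the cycle terms cancel against those in (\ref{eq:permgenus}), and the whole claim collapses to the single clean identity $w(\alpha)+w(\alpha^{-1}\zeta_n)=n+1$. Equivalently, introducing the number of excedances $e(\gamma):=n-w(\gamma)=\#\{i:\gamma(i)>i\}$, I must show $e(\alpha)+e(\alpha^{-1}\zeta_n)=n-1$. This is the heart of the matter and the only place where the special structure of $\zeta_n$ intervenes; I expect it to be the main obstacle, since the counts for $\alpha$ and for $\alpha^{-1}\zeta_n$ are genuinely entangled through the cyclic shift.

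To establish the excedance identity I would compute the excedances of $\beta=\alpha^{-1}\zeta_n$ explicitly. For $j<n$ we have $\beta(j)=\alpha^{-1}(j+1)$, so $\beta(j)>j$ is equivalent to $\alpha^{-1}(j+1)\ge j+1$; and for $j=n$ we have $\beta(n)=\alpha^{-1}(1)\le n$, so $n$ is never an excedance. Re-indexing by $k=j+1$ gives $e(\beta)=\#\{k\in\{2,\ldots,n\}:\alpha^{-1}(k)\ge k\}$. Since $\alpha^{-1}(1)\ge 1$ always holds, adjoining $k=1$ shows that $e(\beta)+1$ equals the number of weak excedances of $\alpha^{-1}$. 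I would finish with the standard symmetry: the substitution $k=\alpha(j)$ rewrites the condition $\alpha^{-1}(k)\ge k$ as $\alpha(j)\le j$, so the weak excedances of $\alpha^{-1}$ are counted by $w(\alpha)=n-e(\alpha)$. Hence $e(\beta)=n-e(\alpha)-1$, which is exactly $e(\alpha)+e(\alpha^{-1}\zeta_n)=n-1$. Combining the three steps with (\ref{eq:permgenus}) then yields $b(\alpha)+b(\alpha^{-1}\zeta_n)=2g(\alpha)$, as required.
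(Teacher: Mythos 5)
Your proof is correct, and it is worth noting that the paper itself does not prove Lemma~\ref{lem:backp} at all: it simply cites \cite[Lemma~1.5]{Cori-Hetyei}, which in turn is presented there as a variant of \cite[Lemma~5]{Chapuy}. So your argument is genuinely different in kind: it is a self-contained, purely elementary proof where the paper defers to external references. Your two key steps both check out. The local identity $b(\beta)=d(\beta)-z(\beta)+f(\beta)$ is verified correctly cycle by cycle: in each cycle of length at least $2$ with minimum $m$, the unique anti-excedance landing on $m$ (namely $\beta^{-1}(m)$) is excluded from being a back point by the definition in the paper, and every other anti-excedance lands strictly above $m$ and hence is a back point, so each such cycle accounts for exactly one anti-excedance that is not a back point. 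The entangled part, $e(\alpha)+e(\alpha^{-1}\zeta_n)=n-1$, is also handled correctly: your re-indexing shows $e(\alpha^{-1}\zeta_n)+1$ equals the number of weak excedances of $\alpha^{-1}$, and the substitution $k=\alpha(j)$ identifies that count with $\#\{j:\alpha(j)\le j\}=n-e(\alpha)$; combining with the genus formula (\ref{eq:permgenus}) then gives the lemma. What your approach buys is transparency and independence: a reader can check the lemma without consulting \cite{Cori-Hetyei} or \cite{Chapuy}, and the decomposition into a cycle-local identity plus a global excedance symmetry isolates exactly where the special structure of $\zeta_n$ enters (only in the shift re-indexing), which is a clean way to see why the constant $n+1$ in (\ref{eq:permgenus}) matches the constant in $w(\alpha)+w(\alpha^{-1}\zeta_n)=n+1$.
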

In particular, since a partition has no back points,
Lemma~\ref{lem:backp} has the following consequence.
\begin{corollary}
\label{cor:backp}
If the partition $\alpha$ of $\{1,\ldots,n\}$ has genus $g$ then the
permutation $\alpha^{-1} \zeta_n$ has $2g$ back points.  
\end{corollary}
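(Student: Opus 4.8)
The plan is to derive Corollary~\ref{cor:backp} directly from Lemma~\ref{lem:backp} by showing that a partition, as defined in this paper, has no back points at all. Once that is established, the sum in Lemma~\ref{lem:backp} reduces to just the back points of $\alpha^{-1}\zeta_n$, which must therefore equal $2g(\alpha)$.

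First I would recall the definition of a back point: an element $i$ is a back point of $\alpha$ if $\alpha(i) < i$ and $\alpha(i)$ is not the smallest element in its cycle. The crucial observation is that for a partition, the two conditions defining a back point are mutually exclusive. Indeed, suppose $\alpha$ is a partition and $\alpha(i) < i$. By the definition of a partition, each cycle listed from its least element is strictly increasing; equivalently, starting from the minimum $m$ of a cycle, the successive images $m, \alpha(m), \alpha^2(m), \ldots$ increase until the cycle wraps back to $m$ via its largest element. Hence the only element $i$ in a cycle whose image decreases, i.e. satisfies $\alpha(i) < i$, is the \emph{largest} element of that cycle, and its image $\alpha(i)$ is precisely the \emph{smallest} element $m$ of the cycle. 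Therefore whenever $\alpha(i) < i$ holds, $\alpha(i)$ \emph{is} the smallest element in its cycle, so the second condition for being a back point fails.

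This shows no $i$ can be a back point of a partition $\alpha$, so the number of back points of $\alpha$ is zero. Substituting into Lemma~\ref{lem:backp}, the number of back points of $\alpha^{-1}\zeta_n$ alone equals $2g(\alpha) = 2g$, which is exactly the claim of Corollary~\ref{cor:backp}.

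The argument is essentially immediate, so there is no genuine obstacle; the only point requiring a moment of care is verifying cleanly that in a partition the sole element with a smaller image is the maximum of its cycle and that this image is the cycle minimum. This follows at once from the increasing-listing characterization of the cycles, so I would state it as the key lemma-free observation and then simply invoke Lemma~\ref{lem:backp}.
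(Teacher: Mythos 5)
Your proposal is correct and follows exactly the paper's route: the paper derives the corollary from Lemma~\ref{lem:backp} together with the observation that a partition has no back points, which the paper simply asserts and you verify in detail (the only element of a cycle with a smaller image is the cycle's maximum, whose image is the cycle's minimum). Your spelled-out verification of that observation is sound, so the proof stands as written.
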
  

It was shown in  \cite[Theorem~1]{Cori} that a partitions of 
genus $0$ are exactly the noncrossing partitions.
A formula for the number of partitions of genus $1$ of a given number of
elements with a given number of parts was conjectured by
M.\ Yip~\cite{Yip}, and shown by the present
authors~\cite{Cori-Hetyei}.

\section{Elementary reductions and reduced permutations}
\label{sec:er}

In this section we will describe a reduction process that is similar to
the one we used in~\cite{Cori-Hetyei} to prove the formula for the
partitions of genus one, but the class of reductions used will be
slightly larger this time. Here and in the rest of the paper we will
often write $i+j$ instead of $\zeta_n^{j}(i)$, that is, all additions and
subtractions are understood to be taken modulo the number of elements in
the underlying set. 
\begin{figure}[h]
\begin{center}
\input{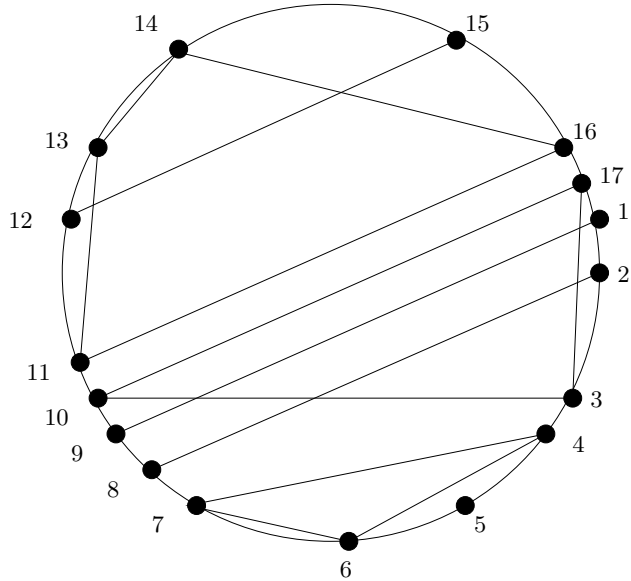}
\end{center}
\caption{The general partition $\alpha_0$ of genus $2$}
\label{fig:general}
\end{figure}

Consider the partition
$$
\alpha_0=(1,9)(2,8)(3,10,17)(4,6,7)(5)(11,13,14,16)(12,15)
$$
This partition on $17$ elements
satisfies
\begin{equation}
\label{eq:a0}  
\alpha_0^{-1}\zeta_{17}=(1,8)(2,17,9,3,7)(4,5)(6)(10,16)(11,15,14,12)(13) 
\end{equation}
and it has genus $2$ by (\ref{eq:permgenus}). The partition $\alpha_0$
is shown in Figure~\ref{fig:general}. We may represent every permutation
$\alpha$ in a similar way: we put the elements $1,\ldots,n$ on a circle in
clockwise order and for each $i$ put a directed edge beginning at $i$
and ending at $\alpha(i)$. We may always omit indicating the loops
corresponding to the fixed points of $\alpha$, and for the two-cycles we
may always use a single undirected edge to replace a pair of directed
edges between the same pair of points. Finally, for partitions we may
also omit indicating the orientation on the remaining edges, as each cycle of
length at least $3$ of a permutation that is a partition is oriented
clockwise. We call the resulting figure the {\em diagram of the
  partition}. 

We will repeatedly use reduction steps of two kinds: one removes a fixed
point of $\alpha$, and the other one removes a fixed point of
$\alpha^{-1}\zeta_n$. We define these steps for permutations in general,
and the main result in this section will be about classes of
permutations that do not have to be partitions. 

\begin{definition}
\label{def:er1}
Let $\alpha$ be any permutation of $\{1,\ldots,n\}$.   
An {\em elementary reduction of the first kind} is the removal of a
fixed point of $\alpha$:  given an $i$ such that $\alpha(i)=i$, we
remove the cycle $(i)$ from the cycle decomposition of $\alpha$ and
replace all $j>i$ by $j-1$, thus obtaining the cycle decomposition of a
permutation $\alpha'$ of $\{1,\ldots,n-1\}$. We call the inverse of this
operation, assigning $\alpha$ to $\alpha'$, an {\em elementary
  extension of the first kind}.     
\end{definition}  
For example, the permutation $\alpha_0$ given in (\ref{eq:a0}) (shown in
Figure~\ref{fig:general}) has $5$ as a fixed point. Removing this fixed
point is an elementary 
reduction of the first kind, giving rise to the permutation
\begin{equation}
\label{eq:ar1}  
(1,8)(2,7)(3,9,16)(4,5,6)(10,12,13,15)(11,14).
\end{equation}
Removing a fixed point in such a manner does not change the genus of a
permutation, because of the following two, obvious observations.
\begin{lemma}
\label{lem:fixp}  
Let $\alpha$ be a permutation of $\{1,\ldots,n\}$. Then $i$ is a fixed
point of $\alpha$ if and only if $\alpha^{-1}\zeta_n$ takes $i-1$ into
$i$. 
\end{lemma}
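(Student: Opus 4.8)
The plan is to prove the biconditional in Lemma~\ref{lem:fixp} directly from the definitions, simply by evaluating the permutation $\alpha^{-1}\zeta_n$ at the point $i-1$. The core observation is that the condition ``$\alpha^{-1}\zeta_n$ takes $i-1$ into $i$'' can be rewritten by composing with $\alpha$ on the left, turning it into a condition purely about $\alpha$.

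First I would recall the convention fixed in Section~\ref{sec:er} that all point labels are read modulo $n$, so that $\zeta_n(i-1)=i$ holds for every $i$, including the wrap-around case $i=1$ where $i-1$ means $n$. This lets me treat all $i$ uniformly without separating out boundary cases. Then I would compute $(\alpha^{-1}\zeta_n)(i-1)=\alpha^{-1}(\zeta_n(i-1))=\alpha^{-1}(i)$. Hence the statement ``$\alpha^{-1}\zeta_n$ takes $i-1$ into $i$'' is exactly the statement $\alpha^{-1}(i)=i$.

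The final step is to note that $\alpha^{-1}(i)=i$ is equivalent to $\alpha(i)=i$, since a permutation fixes a point if and only if its inverse does (apply $\alpha$ to both sides). Chaining the two equivalences gives: $i$ is a fixed point of $\alpha$ $\iff$ $\alpha^{-1}(i)=i$ $\iff$ $(\alpha^{-1}\zeta_n)(i-1)=i$, which is precisely the claim.

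There is essentially no obstacle here; the lemma is labeled ``obvious'' in the text and the only thing to be careful about is the indexing convention, i.e.\ making sure that the modular interpretation of $i-1$ is applied consistently so that $\zeta_n(i-1)=i$ is literally an identity rather than something requiring a case split at the endpoints. Everything else is a one-line composition of permutations.
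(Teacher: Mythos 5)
Your proof is correct and is exactly the intended argument: the paper states this lemma without proof as an ``obvious observation,'' and the one-line computation $(\alpha^{-1}\zeta_n)(i-1)=\alpha^{-1}(\zeta_n(i-1))=\alpha^{-1}(i)$, together with $\alpha^{-1}(i)=i\iff\alpha(i)=i$, is precisely the justification it has in mind (your composition order, with $\zeta_n$ acting first, matches the paper's convention, as one can check against the worked example $\alpha_0$). Your attention to the modular convention for $i-1$ is also consistent with the paper's stated convention in Section~\ref{sec:er}.
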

\begin{lemma}
Let $\alpha$ be a permutation of $\{1,\ldots,n\}$, satisfying
$\alpha(i)=i$ and let $\alpha'$ be the permutation of $\{1,\ldots,n-1\}$
obtained from $\alpha$ by removing the fixed point $i$. Then the cycle
decomposition of $\alpha'^{-1}\zeta_{n-1}$ is obtained from the cycle
decomposition of $\alpha^{-1}\zeta_n$ by removing $i$ from the cycle
containing it and decreasing all $j>i$ by one. 
\end{lemma}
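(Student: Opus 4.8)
The plan is to reduce the whole statement to two conjugation identities. Write $\beta=\alpha^{-1}\zeta_n$ and $\beta'=\alpha'^{-1}\zeta_{n-1}$, and let $\phi\colon\{1,\ldots,n\}\setminus\{i\}\to\{1,\ldots,n-1\}$ be the order-preserving relabeling bijection, so that $\phi(j)=j$ for $j<i$ and $\phi(j)=j-1$ for $j>i$. Since $\alpha(i)=i$, the permutation $\alpha$ restricts to a permutation of $\{1,\ldots,n\}\setminus\{i\}$, and by the definition of the elementary reduction of the first kind we have $\alpha'=\phi\alpha\phi^{-1}$, hence $\alpha'^{-1}=\phi\alpha^{-1}\phi^{-1}$. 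Observe that conjugation by $\phi$ is exactly the operation ``decrease every $j>i$ by one'' on cycle decompositions. Thus, if $\widehat\beta$ denotes the permutation of $\{1,\ldots,n\}\setminus\{i\}$ obtained from $\beta$ by splicing $i$ out of the cycle containing it, the assertion of the lemma is precisely the identity $\beta'=\phi\widehat\beta\phi^{-1}$.

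First I would deal with the cyclic successor. The only place where $\zeta_{n-1}$ and $\zeta_n$ fail to correspond under $\phi$ is at $i$, so I introduce the \emph{skip-$i$ successor} $\delta$ on $\{1,\ldots,n\}\setminus\{i\}$, defined by $\delta(i-1)=i+1$ and $\delta(m)=m+1=\zeta_n(m)$ for every other $m$, all arithmetic being taken modulo $n$ as stipulated at the start of the section. A short case check comparing $\phi(\delta(\phi^{-1}(m)))$ with $m+1$ yields the conjugation identity $\zeta_{n-1}=\phi\delta\phi^{-1}$: removing one point from a cyclic order and relabeling order-preservingly returns the standard cyclic order on the remaining points. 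Substituting then gives $\beta'=\phi\alpha^{-1}\phi^{-1}\cdot\phi\delta\phi^{-1}=\phi(\alpha^{-1}\delta)\phi^{-1}$, so the task reduces to identifying $\alpha^{-1}\delta$ with $\widehat\beta$.

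Next I would check $\alpha^{-1}\delta=\widehat\beta$ pointwise on $\{1,\ldots,n\}\setminus\{i\}$. Applying Lemma~\ref{lem:fixp} to the fixed point $i$ of $\alpha$ shows $\beta(i-1)=i$, so in $\beta$ the predecessor of $i$ is $i-1$ and its successor is $\beta(i)=\alpha^{-1}(i+1)$; consequently $\widehat\beta(i-1)=\alpha^{-1}(i+1)$ while $\widehat\beta(m)=\beta(m)$ for every other $m$ (note $\alpha^{-1}(i+1)\neq i$, so $i$ really lies in a cycle of length at least two and the splice-out is well defined). On the other hand, $\alpha^{-1}\delta(i-1)=\alpha^{-1}(i+1)$, and for $m\neq i-1$ we have $\delta(m)=\zeta_n(m)$, whence $\alpha^{-1}\delta(m)=\beta(m)$. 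The two permutations agree everywhere, so $\alpha^{-1}\delta=\widehat\beta$ and therefore $\beta'=\phi\widehat\beta\phi^{-1}$, which is the claim.

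The only genuine obstacle is bookkeeping rather than ideas. One must keep the modular conventions straight and confirm that the boundary cases $i=1$ and $i=n$, where $i-1$ or $i+1$ wraps around, are already absorbed by reading all additions and subtractions modulo $n$; in each of these cases one verifies directly that $\beta(i-1)=i$ still holds. Beyond this routine case analysis, the conceptual content is entirely carried by the two conjugation identities $\alpha'^{-1}=\phi\alpha^{-1}\phi^{-1}$ and $\zeta_{n-1}=\phi\delta\phi^{-1}$.
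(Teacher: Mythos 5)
Your proof is correct. Note that the paper offers no proof of this lemma at all: it is presented as one of ``two obvious observations,'' so there is nothing to compare against beyond the implicit computation that $\alpha^{-1}\zeta_n(i-1)=i$ and $\alpha^{-1}\zeta_n(i)=\alpha^{-1}(i+1)$, which is exactly what your splice-out step captures. Your conjugation framework ($\alpha'^{-1}=\phi\alpha^{-1}\phi^{-1}$, $\zeta_{n-1}=\phi\delta\phi^{-1}$, and the pointwise identification $\alpha^{-1}\delta=\widehat\beta$) is a clean and complete formalization of that intuition, and you correctly handle the two points the paper glosses over: that $i$ cannot be a fixed point of $\alpha^{-1}\zeta_n$ (so the splice is well defined), and that the wrap-around cases $i=1$, $i=n$ are absorbed by the modular conventions.
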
  
As a consequence of these two lemmas, the permutation $\alpha'$,
obtained from $\alpha$ the removal of the fixed point $i$ satisfies
$$
z(\alpha')=z(\alpha)-1 \quad\mbox{and}\quad
z(\alpha'^{-1}\zeta_{n-1})=z(\alpha^{-1}\zeta_n).   
$$
The number of permuted elements decreased by one, so the genus remains
unchanged by (\ref{eq:permgenus}). 
The second elementary reduction we will use is completely analogous: it
removes a fixed point of $\alpha^{-1}\zeta_n$. Note that, in analogy to
Lemma~\ref{lem:fixp}, the following observation holds.

\begin{lemma}
  \label{lem:dfixp}
Let $\alpha$ be a permutation of $\{1,\ldots,n\}$. Then $i$ is a fixed
point of $\alpha^{-1}\zeta_n$ if and only if $\alpha(i)=i+1$.
\end{lemma}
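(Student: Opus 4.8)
The plan is to prove this by a direct unwinding of the definition of the composite permutation $\alpha^{-1}\zeta_n$, exactly paralleling the reasoning behind Lemma~\ref{lem:fixp}. The statement is a single functional identity, so no structural machinery is needed; the whole content is one short computation, carried out carefully with respect to the composition convention and the modular arithmetic.

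First I would pin down the composition convention. The identity asserted in Lemma~\ref{lem:fixp} --- that $\alpha(i)=i$ is equivalent to $\alpha^{-1}\zeta_n$ sending $i-1$ to $i$ --- forces the right-to-left reading of products, under which $(\alpha^{-1}\zeta_n)(x)=\alpha^{-1}(\zeta_n(x))$. Since $\zeta_n=(1,2,\ldots,n)$ acts by $\zeta_n(x)=x+1$, with all arithmetic taken modulo $n$ as declared at the start of Section~\ref{sec:er}, this yields the basic evaluation $(\alpha^{-1}\zeta_n)(x)=\alpha^{-1}(x+1)$.

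Applying this at $x=i$, the point $i$ is a fixed point of $\alpha^{-1}\zeta_n$ precisely when $\alpha^{-1}(i+1)=i$. Applying the bijection $\alpha$ to both sides --- a step that is reversible, so that the equivalence holds in both directions at once --- turns this into $i+1=\alpha(i)$, which is exactly the claimed condition $\alpha(i)=i+1$.

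I expect no genuine obstacle here. The only points that demand attention are keeping the order of composition consistent with the convention underlying Lemma~\ref{lem:fixp}, and reading every index modulo $n$ so that the boundary case $i=n$ (where $i+1$ wraps around to $1$) is subsumed uniformly. In fact the statement is the precise dual of Lemma~\ref{lem:fixp}: there $\alpha$ carried the fixed point while $\alpha^{-1}\zeta_n$ performed an adjacent shift, whereas here the roles are interchanged, with $\alpha^{-1}\zeta_n$ carrying the fixed point and $\alpha$ performing the adjacent shift. One could therefore invoke this symmetry in place of recomputing, but the direct verification above is shorter than setting up the duality explicitly.
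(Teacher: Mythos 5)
Your proof is correct and is exactly the direct verification the paper intends: the paper states Lemma~\ref{lem:dfixp} without proof as an obvious observation analogous to Lemma~\ref{lem:fixp}, and your one-line computation $(\alpha^{-1}\zeta_n)(i)=\alpha^{-1}(i+1)=i \iff \alpha(i)=i+1$ (with indices mod $n$) is precisely that argument, including the correct reading of the composition convention.
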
  
Keeping in mind Lemma~\ref{lem:dfixp}, we call an element $i$
satisfying $\alpha(i)=i+1$ a {\em dual fixed point} of $\alpha$.
\begin{definition}
\label{def:er2}
Let $\alpha$ be any permutation of $\{1,\ldots,n\}$.   
An {\em elementary reduction of the second kind} is the removal of a
dual fixed point of $\alpha$ as follows:  given an $i<n$ such that
$\alpha(i)=i+1$, in the decomposition of $\alpha$ we
remove $i$ from the cycle containing it, and replace all $j>i$ by
$j-1$, thus obtaining the cycle decomposition of a permutation $\alpha'$
of $\{1,\ldots,n-1\}$. We call the inverse of this
operation, assigning $\alpha$ to $\alpha'$, an {\em elementary
  extension of the second kind}.     
\end{definition}  

Just like an elementary reduction of the first kind, an elementary
reduction of the second kind does not change the genus. 
For the permutation $\alpha_0$ given in (\ref{eq:a0}), the permutation
$\alpha_0^{-1}\zeta_{17}$ has two fixed points: we have $\alpha_0(6)=7$
and $\alpha_0(13)=14$. After removing the fixed point $5$, for 
resulting permutation $\alpha_1$ not only $5$ and $12$ are dual fixed points
of $\alpha$, but we also have $\alpha_1(4)=5$. We may continue removing
fixed points and dual fixed points, until we end up with a permutation
having no fixed points nor dual fixed points. Note that in some cases we
may end up removing all permuted elements, and so we will consider the
degenerate bijection of the empty set with itself a permutation. This
case does not arise if we start with a permutation that has positive genus.

\begin{definition}
We call a permutation $\alpha$ of $\{1,\ldots,n\}$ {\em reduced} if
neither $\alpha$ nor $\alpha^{-1}\zeta_n$ has any fixed points. 
\end{definition}  

In the case of $\alpha_0$ given in (\ref{eq:a0}) we may perform
elementary reductions in many ways, but after removing all fixed points
and dual fixed points, we will always end up with the
same reduced permutation $\alpha_1$, shown in Figure~\ref{fig:reduced}. 

\begin{figure}[h]
\begin{center}
\input{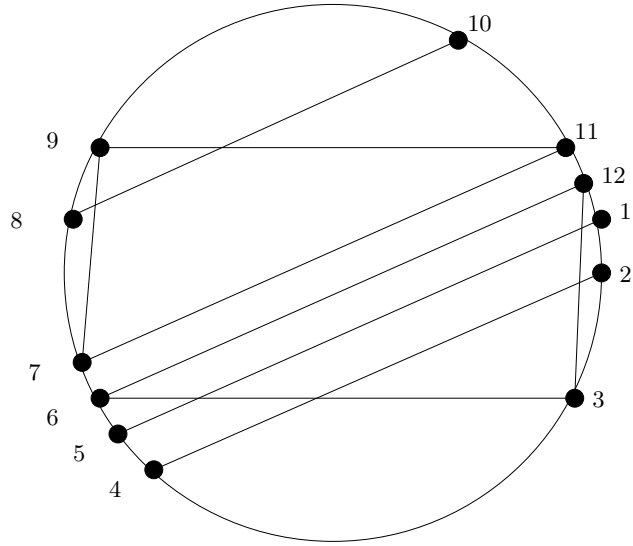}
\end{center}
\caption{The reduced partition $\alpha_1$ obtained from $\alpha_0$}
\label{fig:reduced}
\end{figure}

It is a trivial fact that each elementary reduction steps takes a
partition into a partition. Furthermore, even though there may be many
ways to perform the elementary reductions, if we keep performing these
operations until no such operation can be performed, the resulting
reduced permutation is unique.

\begin{theorem}
\label{thm:ured}
  Let $\alpha$ be any permutation on $\{1,\ldots,n\}$. Let us keep
  performing elementary reductions until we arrive at a reduced
  permutation. The resulting permutation is unique, regardless of the
  order in which the elementary reductions were performed.
\end{theorem}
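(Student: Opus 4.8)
The plan is to treat the two elementary reductions as an abstract rewriting system on the collection of all permutations (of $\{1,\ldots,m\}$ for every $m$) and to verify the two hypotheses of Newman's Lemma, the Diamond Lemma: termination and local confluence. Termination is immediate, since each elementary reduction decreases the cardinality of the underlying set by one; hence, starting from $\alpha\in\Sym{n}$, no chain of reductions can have length exceeding $n$. By Newman's Lemma, a terminating rewriting system has unique normal forms precisely when it is locally confluent, so it suffices to prove the following: whenever two \emph{distinct} elementary reductions can be applied to a permutation $\alpha$, producing $\beta_1$ and $\beta_2$, there is a permutation $\gamma$ reachable from each of $\beta_1$ and $\beta_2$ by further elementary reductions.

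To establish local confluence I would fix $\alpha\in\Sym{n}$ and two distinct reduction sites, and show that the two reductions commute ``on the nose'': applying one and then the other (with the index of the surviving site shifted by the relabeling) yields the same permutation $\gamma$ in either order, so a single further step from each of $\beta_1,\beta_2$ reaches the common $\gamma$. There are three cases according to the kinds of the two reductions. For two reductions of the first kind, at fixed points $i<j$ of $\alpha$, removing $i$ relabels $j$ to $j-1$, which is still a fixed point by Lemma~\ref{lem:fixp}; removing $j$ leaves $i$ fixed; and both orders delete the two one-cycles and renumber the survivors order-preservingly, so the outputs agree. The case of two reductions of the second kind, at dual fixed points $i<j$, is handled identically using Lemma~\ref{lem:dfixp}: from $\alpha(j)=j+1$ and $j>i$ one checks that after deleting $i$ one has $\alpha'(j-1)=j$, so $j-1$ remains a dual fixed point, and symmetrically for deleting $j$ first.

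The only case requiring genuine care is the mixed one, a fixed point $i$ and a dual fixed point $j$ with $i\neq j$, where a priori deleting one site could destroy the other. Here I would first rule out the dangerous coincidences. The equality $i=j$ is impossible, since $\alpha(i)=i$ and $\alpha(i)=i+1$ cannot both hold; the relation $j=i-1$ is impossible, since $\alpha(i-1)=i=\alpha(i)$ would violate injectivity; and $\alpha(j)=i$ is impossible, since a fixed point forms a one-cycle and so cannot lie in the cycle of $j$. With these excluded, one verifies that removing $i$ preserves the dual fixed point at $j$ (equivalently $j-1$), because $\alpha(j)=j+1$ is unaffected except by the order-preserving relabeling, and that removing $j$ preserves the fixed point at $i$ (equivalently $i-1$), because the only point whose image changes is the predecessor $\alpha^{-1}(j)$, which differs from $i$. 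The two reductions then commute by the same relabeling bookkeeping.

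The step I expect to be the main obstacle is precisely this mixed-type commutation: tracking the index shifts simultaneously through the cycle decomposition of $\alpha$ and, via Lemmas~\ref{lem:fixp} and~\ref{lem:dfixp}, through that of $\alpha^{-1}\zeta_n$, while keeping the modular ``$+1$'' bookkeeping (the wraparound at $n$) consistent. Once local confluence is verified in all three cases, Newman's Lemma delivers the uniqueness of the terminal reduced permutation, completing the proof.
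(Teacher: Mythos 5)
Your proposal is correct, but it proves the theorem by a genuinely different route than the paper. You set up the two elementary reductions as an abstract rewriting system and invoke Newman's Lemma: termination is trivial (each step shrinks the ground set), and local confluence follows from your case analysis showing that two distinct reduction sites always commute on the nose --- indeed your analysis shows the stronger fact that performing one reduction never destroys another site, only relabels it (the dangerous coincidences you rule out, $j=i-1$ and its wraparound twin $i=1$, $j=n$, are exactly the right ones, and the adjacent dual fixed points $j=i+1$ and the dual fixed point at $n$ with $\alpha(n)=1$ are the cases needing the modular bookkeeping you flag). The paper instead argues globally: it defines an element $i$ to be \emph{removable} when every $j$ in the clockwise arc $[i,\alpha(i)]$ maps into that arc and the edges supported on the arc are pairwise non-crossing, and then shows by induction on the length of the arc that the removable elements are precisely those deleted in \emph{any} maximal reduction sequence; uniqueness follows since the surviving elements and their final relabeling (each non-removable $i$ drops by the number of removable elements in $[1,i]$) are thereby described without reference to the order of reductions. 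The trade-off: your argument is modular and essentially self-verifying --- everything reduces to a finite list of local two-site checks, with no need to guess a global invariant --- whereas the paper's proof, though tersely written (its induction is only sketched), produces an explicit order-free characterization of the removed set that the paper reuses later, in the guise of removable edges of the bicolored matching (Definition~\ref{def:removable} and Proposition~\ref{prop:ured}), where having the invariant description rather than just abstract uniqueness is what is actually needed.
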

\begin                            {proof}
Let us call an element $i\in \{1,2,\ldots,n\}$ {\em removable} if all
elements $j$ belonging to the clockwise arc $[i,\alpha(i)]$ have the
property that $\alpha(j)$ also belongs to the same arc and that for any
two elements $j,j'$ on the clockwise arc $[i,\alpha(i)]$, the directed
edges $j\rightarrow \alpha(j)$ and $j'\rightarrow \alpha(j')$  do not cross:
equivalently, the clockwise arcs $[j,\alpha(j)]$ and $[j,\alpha(j')]$ are
either disjoint, or they contain each other, or their intersection is a
subset of $\{j,j',\alpha(j),\alpha(j')\}$. It is easy to see by
  induction on the number of elements contained in the clockwise arc
  $[i,\alpha(i)]$ that an element $i$ is removable, if and only of it
  gets removed during any sequence of elementary reduction steps that
  does not stop before reaching a reduced permutation. By the end of any
  such reduction process, each number $i$ that is not removable is
  decreased by the number of removable elements in the interval $[1,i]$.  
\end{proof}

Due to Theorem~\ref{thm:ured}, any  class of permutations that is closed
under elementary reductions and extensions is completely determined by
the reduced permutations in the class. Knowing how to count the reduced
permutations should enable us to count all permutations in the
class. We will provide a formula for this in Section~\ref{sec:pcount}. 

\section{Bicolored matchings and reduced permutations}
\label{sec:bic}

The elementary reductions of the first and second kind, introduced in
Section~\ref{sec:er}, are duals of each other. In this section we review
a representation of permutations as bicolored matchings: in this setting
the duality becomes more apparent, making the proofs in
Section~\ref{sec:pcount} easier to present. The bicolored construction presented
here is a simplification of the four-colored construction presented
in~\cite{Cori}.   

Let $\alpha$ be a permutation of the set $\{1,\ldots,n\}$. We define the
{\em bicolored matching} $\mu[\alpha]$ associated to $\alpha$ as
the following permutation of the set $\{\pm 1,\ldots, \pm n\}$: for each
positive $i$, the permutation $\mu[\alpha]$ matches $i$ with
$-\alpha(i)$ and they form the $2$-cycle
$(i,-\alpha(i))$. Equivalently, for each $i>0$, the 
permutation $\mu[\alpha]$ contains the $2$-cycle $(-i,
\alpha^{-1}(i))$. The permutation $\mu[\alpha]$ is a fixed point
free involution arising as a product of $n$ $2$-cycles which we may
think of as edges in a matching of the set $\{\pm 1,\ldots,\pm n\}$,
where each edge matches a positive element with a negative element.   
\begin{example}
\label{ex:1}
For $\alpha=(1,5,3,4,8)(2,7)(6)$ we have\\
$\mu[\alpha]=(1,-5)(5,-3)(3,-4)(4,-8)(8,-1)(2,-7)(7,-2)(6,-6)$. 
\end{example}
The following statement is a degenerate case of \cite[Ch 1,
  Propri\'et\'e II.1]{Cori}, regarding the representability of a
hypermap as a map. 
\begin{lemma}
Let $\alpha$ be a permutation of $\{1,\ldots,n\}$ and let
$\zeta_n=(1,2,\ldots,n)$. Then the genus of the hypermap
$(\zeta_n,\alpha)$ is the same as the genus of the map
$(\widetilde{\zeta}_{2n},\mu[\alpha])$ where
$\widetilde{\zeta}_{2n}=(-1,1,-2,2,\ldots,-n,n)$. 
\end{lemma}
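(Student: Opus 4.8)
The plan is to apply the defining genus equation \eqref{eq:genusdef} to both hypermaps and thereby reduce the claim to a single cycle-count identity. First I would note that $(\widetilde{\zeta}_{2n},\mu[\alpha])$ really is a hypermap: the $2n$-cycle $\widetilde{\zeta}_{2n}$ already acts transitively on $\{\pm 1,\ldots,\pm n\}$, so the subgroup it generates together with $\mu[\alpha]$ is transitive and \eqref{eq:genusdef} applies. Writing \eqref{eq:genusdef} for $(\widetilde{\zeta}_{2n},\mu[\alpha])$ on $2n$ points, and using $z(\widetilde{\zeta}_{2n})=1$ together with $z(\mu[\alpha])=n$ (since $\mu[\alpha]$ is a product of $n$ disjoint transpositions), I obtain
\[
2n + 2 - 2g(\widetilde{\zeta}_{2n},\mu[\alpha]) = 1 + n + z(\mu[\alpha]^{-1}\widetilde{\zeta}_{2n}),
\]
which simplifies to $n+1-2g(\widetilde{\zeta}_{2n},\mu[\alpha]) = z(\mu[\alpha]^{-1}\widetilde{\zeta}_{2n})$. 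On the other side, Definition~\ref{def:permgenus} and \eqref{eq:permgenus} give $n+1-2g(\zeta_n,\alpha) = z(\alpha) + z(\alpha^{-1}\zeta_n)$. Comparing the two, it suffices to establish the identity
\[
z(\mu[\alpha]^{-1}\widetilde{\zeta}_{2n}) = z(\alpha) + z(\alpha^{-1}\zeta_n).
\]

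The heart of the argument, and the step I expect to demand the most care, is computing the cycle structure of $\mu[\alpha]^{-1}\widetilde{\zeta}_{2n}$ explicitly. Since $\mu[\alpha]$ is an involution, $\mu[\alpha]^{-1}=\mu[\alpha]$, so I would work with $\mu[\alpha]\widetilde{\zeta}_{2n}$ under the right-to-left convention $(fg)(x)=f(g(x))$ already in force in the paper (as witnessed by Lemmas~\ref{lem:fixp} and~\ref{lem:dfixp}). From the description of $\mu[\alpha]$ I would record that, for every $i>0$, one has $\mu[\alpha](i)=-\alpha(i)$ and $\mu[\alpha](-i)=\alpha^{-1}(i)$, while $\widetilde{\zeta}_{2n}(i)=-\zeta_n(i)$ and $\widetilde{\zeta}_{2n}(-i)=i$. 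A direct evaluation then shows that the product preserves signs: for $i>0$ one computes $(\mu[\alpha]\widetilde{\zeta}_{2n})(i)=\mu[\alpha](-\zeta_n(i))=\alpha^{-1}(\zeta_n(i))=(\alpha^{-1}\zeta_n)(i)$, and $(\mu[\alpha]\widetilde{\zeta}_{2n})(-i)=\mu[\alpha](i)=-\alpha(i)$.

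Consequently $\mu[\alpha]\widetilde{\zeta}_{2n}$ leaves both the positive and the negative halves of $\{\pm 1,\ldots,\pm n\}$ invariant: on $\{1,\ldots,n\}$ it acts exactly as $\alpha^{-1}\zeta_n$, while on $\{-1,\ldots,-n\}$, through the identification $-i\leftrightarrow i$, it acts exactly as $\alpha$. Its cycles are therefore the disjoint union of the cycles of $\alpha^{-1}\zeta_n$ and those of $\alpha$, giving $z(\mu[\alpha]\widetilde{\zeta}_{2n})=z(\alpha^{-1}\zeta_n)+z(\alpha)$, which is precisely the identity needed. Substituting back yields $g(\widetilde{\zeta}_{2n},\mu[\alpha])=g(\zeta_n,\alpha)$, completing the proof. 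The only genuinely delicate point is keeping the composition convention and the signs consistent throughout the explicit evaluation; once the sign-preservation is in place, the cycle count and the final comparison are immediate.
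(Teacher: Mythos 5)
Your proof is correct, and it is worth noting that the paper itself contains no proof of this lemma at all: the statement is introduced only as ``a degenerate case of'' Propri\'et\'e II.1 in Chapter 1 of Cori's monograph \cite{Cori}, on the representability of a hypermap as a map. Your argument is therefore a genuinely different, self-contained route. You reduce the claim, via the defining equation \eqref{eq:genusdef} applied to both objects (using $z(\widetilde{\zeta}_{2n})=1$ and $z(\mu[\alpha])=n$, both of which are right), to the single identity $z\bigl(\mu[\alpha]^{-1}\widetilde{\zeta}_{2n}\bigr)=z(\alpha)+z(\alpha^{-1}\zeta_n)$, and you verify this identity by the explicit computation that $\mu[\alpha]\widetilde{\zeta}_{2n}$ stabilizes each half of $\{\pm 1,\ldots,\pm n\}$, acting as $\alpha^{-1}\zeta_n$ on the positive half and as a sign-relabelled copy of $\alpha$ on the negative half; the sign bookkeeping ($\mu[\alpha](i)=-\alpha(i)$, $\mu[\alpha](-i)=\alpha^{-1}(i)$, $\widetilde{\zeta}_{2n}(i)=-\zeta_n(i)$, $\widetilde{\zeta}_{2n}(-i)=i$) is accurate, and your reading of the composition convention off Lemmas~\ref{lem:fixp} and \ref{lem:dfixp} is the correct one. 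What the paper's citation buys is brevity and the placement of the lemma inside Cori's general theory; what your computation buys is a complete proof in the paper's own notation, at the cost of a few lines of checking. One small remark: the point you flag as most delicate is actually robust --- with the opposite composition convention the two halves would carry $\alpha$ and $\zeta_n\alpha^{-1}$, and the latter is conjugate to $\alpha^{-1}\zeta_n$, so the cycle-count identity (hence the lemma) holds either way.
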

We may represent the matching $\mu[\alpha]$ by marking the
elements $-1,1,-2,2,\ldots,-n,n$ on a circle in this order and then
connecting the elements $i$ and $\mu[\alpha](i)$ with an edge, as shown
in Figure~\ref{fig:bic}. 
\begin{figure}[h]
\begin{center}
\input{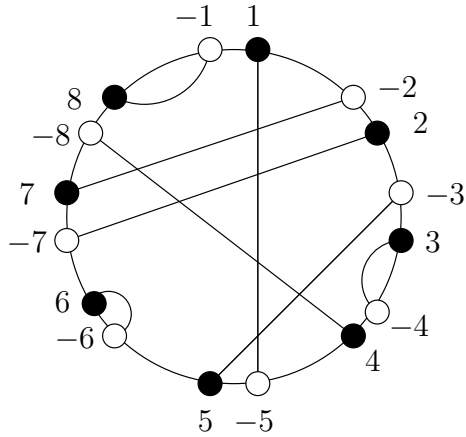}
\end{center}
\caption{Bicolored matching associated to $\alpha=(1,5,3,4,8)(2,7)(6)$}
\label{fig:bic}
\end{figure}

The effect of an elementary reduction of the first kind on the
associated bicolored matchings is the following.

\begin{lemma}
\label{lem:er1}
Let $\alpha$ be a permutation on $\{1,\ldots,n\}$ satisfying
$\alpha(i)=i$, end let $\alpha'$ be the permutation on
$\{1,\ldots,n-1\}$ obtained from $\alpha$ by removing the fixed point
$i$. Then $(-i,i)$ is an edge of $\mu[\alpha]$ and $\mu[\alpha']$ is
obtained from $\mu[\alpha]$ by removing the edge $(i,-i)$ and
decreasing the absolute value of the labels on the points
$$-(i+1), (i+1), -(i+2), i+2, \ldots, -n, n$$
by one (without changing their sign).
\end{lemma}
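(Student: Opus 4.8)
The plan is to reduce everything to a single relabeling bijection together with the defining identity of $\alpha'$. I would first dispose of the easy assertion: since $\alpha(i)=i$, the recipe defining $\mu[\alpha]$ places the $2$-cycle $(i,-\alpha(i))=(i,-i)$ among its edges, so $(-i,i)$ is indeed an edge of $\mu[\alpha]$.

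For the main assertion, I would make the relabeling explicit. Let $r$ denote the order-preserving bijection from $\{1,\ldots,n\}\setminus\{i\}$ to $\{1,\ldots,n-1\}$ that fixes every $j<i$ and sends every $j>i$ to $j-1$, and extend $r$ to signed labels by declaring $r(-j)=-r(j)$ for $j>0$. This extended map is precisely the operation described in the statement: it keeps the labels of absolute value less than $i$ and decreases the absolute value of each of $-(i+1),(i+1),\ldots,-n,n$ by one while preserving the sign. The content of Definition~\ref{def:er1} is exactly the commutation relation
\[
\alpha'(r(j))=r(\alpha(j))\qquad\text{for all }j\in\{1,\ldots,n\}\setminus\{i\},
\]
since removing the fixed point $i$ and decreasing every larger label by one amounts to nothing more than conjugating $\alpha$ by $r$ on its surviving points.

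With this in hand the verification is pure bookkeeping. Writing the matching as a set of edges, $\mu[\alpha]=\{(j,-\alpha(j)):1\le j\le n\}$, the edge indexed by $j=i$ is exactly $(i,-i)$, so deleting it leaves $\{(j,-\alpha(j)):j\ne i\}$. Applying $r$ to both endpoints turns the edge $(j,-\alpha(j))$ into $(r(j),-r(\alpha(j)))$, which by the commutation relation equals $(r(j),-\alpha'(r(j)))$. As $j$ runs over $\{1,\ldots,n\}\setminus\{i\}$, the image $r(j)$ runs bijectively over $\{1,\ldots,n-1\}$, so the resulting set of edges is exactly $\{(k,-\alpha'(k)):1\le k\le n-1\}=\mu[\alpha']$, which is what the lemma asserts.

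I do not expect a serious obstacle here; the only points requiring care are the sign conventions and bijectivity. One must check that $r$ carries the positive endpoint $j$ and the negative endpoint $-\alpha(j)$ of each surviving edge to a positive and a negative label respectively (immediate from $r(-j)=-r(j)$, once one notes $\alpha(j)\ne i$ for $j\ne i$), so that the relabeled object is again a bicolored matching; and one must invoke the injectivity and surjectivity of $r$ onto $\{1,\ldots,n-1\}$ to guarantee that every edge of $\mu[\alpha']$ is produced exactly once, with none missing and none duplicated.
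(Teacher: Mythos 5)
Your proof is correct, and it matches the paper's intent exactly: the paper in fact leaves the proofs of Lemmas~\ref{lem:er1}--\ref{lem:er2b} to the reader, summarizing them as the observation that an elementary reduction removes a matched consecutive pair from the cycle $(-1,1,\ldots,-n,n)$ and relabels the rest. Your explicit relabeling bijection $r$ (with $\alpha'=r\circ\alpha\circ r^{-1}$ on the surviving points) is precisely the bookkeeping the authors had in mind, carried out carefully, including the necessary check that $\alpha(j)\ne i$ for $j\ne i$.
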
  

To describe the similar effect of an elementary reduction of the second kind
on the associated bicolored matchings we need to distinguish two cases,
depending on whether the dual fixed point we want to remove is $n$ or a
point with a smaller label. 

\begin{lemma}
\label{lem:er2a}  
Let $\alpha$ be a permutation on $\{1,\ldots,n\}$ satisfying
$\alpha(i)=i+1$ for some $i<n$, end let $\alpha'$ be the permutation on
$\{1,\ldots,n-1\}$ obtained from $\alpha$ by removing the dual fixed point
$i$. Then $(i,-(i+1))$ is an edge of $\mu[\alpha]$ and $\mu[\alpha']$ is
obtained from $\mu[\alpha]$ by removing the edge $(i,-(i+1))$ and
and decreasing the absolute value of the labels on the points
$$i+1, -(i+2), i+2, \ldots, -n, n$$
by one (without changing their sign). 
\end{lemma}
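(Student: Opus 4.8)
The plan is to prove both assertions by tracking individual edges, since $\mu[\alpha]$ and the proposed relabeled matching are each determined edge by edge. The first assertion is immediate from the definition: because $\alpha(i)=i+1$, the matching $\mu[\alpha]$ contains the $2$-cycle $(i,-\alpha(i))=(i,-(i+1))$, so $(i,-(i+1))$ is an edge. Observe next that in the cyclic order $-1,1,-2,2,\ldots,-n,n$ the points $i$ and $-(i+1)$ are adjacent, so deleting this edge removes two consecutive points, and the points following the resulting gap are exactly $i+1,-(i+2),i+2,\ldots,-n,n$, matching the statement. I would let $R$ denote the prescribed relabeling: it fixes $\pm 1,\ldots,\pm(i-1)$ together with $-i$, removes the two points $i$ and $-(i+1)$, and on each of $i+1,-(i+2),i+2,\ldots,-n,n$ it decreases the absolute value by one while keeping the sign. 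The asymmetry of the two thresholds — positive labels shift starting at $i+1$, negative labels only starting at $-(i+2)$ — simply records that the removed edge joins the positive point $i$ to the negative point $-(i+1)$, and it is the analogue, shifted by one slot, of the symmetric situation in Lemma~\ref{lem:er1}.

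To compare with $\mu[\alpha']$ I would describe $\alpha'$ concretely. Writing $p=\alpha^{-1}(i)$ for the predecessor of $i$ in its cycle (note $p\ne i$ since $\alpha(i)=i+1\ne i$), the intermediate permutation $\beta$ obtained by deleting $i$ from its cycle satisfies $\beta(p)=i+1$ and $\beta(j)=\alpha(j)$ for $j\ne p,i$, and then $\alpha'=\rho\beta\rho^{-1}$, where $\rho$ decreases every label exceeding $i$ by one (Definition~\ref{def:er2}). The bulk of the argument is then a routine verification that $R$ carries every edge of $\mu[\alpha]$ other than $(i,-(i+1))$ to the corresponding edge of $\mu[\alpha']$. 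For a positive point $k\notin\{i,p\}$ one has $\alpha(k)\notin\{i,i+1\}$, so neither endpoint of the edge $(k,-\alpha(k))$ is removed; moreover $R(k)=\rho(k)$ and $R(-\alpha(k))=-\rho(\alpha(k))$, and since $\beta(k)=\alpha(k)$ the image is precisely $(\rho(k),-\alpha'(\rho(k)))$, an edge of $\mu[\alpha']$.

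The one edge requiring care — and the step I expect to be the main obstacle — is the edge at the predecessor $p$. In $\mu[\alpha]$ this is $(p,-i)$, coming from $\alpha(p)=i$. Its negative endpoint $-i$ is \emph{fixed} by $R$, and since $\alpha'(\rho(p))=\rho(\beta(p))=\rho(i+1)=i$, the edge $(p,-i)$ is carried by $R$ to $(\rho(p),-i)$, which is exactly the edge of $\mu[\alpha']$ at $\rho(p)$. The delicate point is that this is where the deletion ``closes the gap,'' and one must confirm the label $-i$ is retained rather than shifted down — precisely what the chosen thresholds of $R$ guarantee. I would also record the degenerate case $p=i+1$, i.e.\ when $(i,i+1)$ is a $2$-cycle: there $\alpha'$ acquires the fixed point $i$ and the identical computation yields the edge $(i,-i)$, so no separate treatment is needed. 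Finally, since $R$ is injective on points and $\mu[\alpha]$, $\mu[\alpha']$ have $n$ and $n-1$ edges respectively, the induced map on edges is a bijection once $(i,-(i+1))$ is removed, which completes the proof.
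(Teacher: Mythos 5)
Your proof is correct: the edge-by-edge tracking under the relabeling $R$, the isolation of the predecessor edge $(p,-i)$, and the handling of the degenerate $2$-cycle case all check out. The paper explicitly leaves the proofs of Lemmas~\ref{lem:er1}--\ref{lem:er2b} to the reader, summarizing them only by the remark that dual fixed points correspond to matched consecutive elements of the cycle $(-1,1,\ldots,-n,n)$ whose removal and relabeling restores that cyclic structure; your argument is exactly the routine verification the paper alludes to, carried out in full.
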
 

\begin{lemma}
\label{lem:er2b}  
Let $\alpha$ be a permutation on $\{1,\ldots,n\}$ satisfying
$\alpha(n)=1$ for some $i<n$, end let $\alpha'$ be the permutation on
$\{1,\ldots,n-1\}$ obtained from $\alpha$ by removing the dual fixed point
$n$. Then $(n,-1)$ is an edge of $\mu[\alpha]$ and $\mu[\alpha']$ is
obtained from $\mu[\alpha]$ by removing the edge $(n,-1)$ and
and changing the label $-n$ to $-1$.  
\end{lemma}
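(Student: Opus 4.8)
The plan is to prove both assertions by a direct comparison of edge sets, exactly in the spirit of the proofs of Lemma~\ref{lem:er1} and Lemma~\ref{lem:er2a}; the only genuinely new feature is the wrap-around caused by removing the \emph{maximal} dual fixed point $n$. Throughout I assume $n\geq 2$ (for $n=1$ the point $n$ would also be an ordinary fixed point and the situation is degenerate). The first assertion is immediate from the definition: $\mu[\alpha]$ contains the $2$-cycle $(n,-\alpha(n))$, and since $n$ is a dual fixed point with $\alpha(n)=n+1=1$, this edge is precisely $(n,-1)$.

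Next I would write down $\alpha'$ explicitly. Because $n$ is the largest element, removing it from its cycle requires no relabeling of the surviving points: setting $m:=\alpha^{-1}(n)$, the element $m$ (which formerly satisfied $\alpha(m)=n$) is rerouted to $\alpha'(m)=\alpha(n)=1$, while $\alpha'(j)=\alpha(j)$ for every other $j\in\{1,\ldots,n-1\}$. Here $m\neq n$ (since $\alpha(n)=1\neq n$), so $m\in\{1,\ldots,n-1\}$, and no value $\alpha'(j)$ equals $n$, so the label $-n$ does not occur in $\mu[\alpha']$. It follows that the edges of $\mu[\alpha']$ are $(j,-\alpha(j))$ for $j\in\{1,\ldots,n-1\}\setminus\{m\}$ together with the single edge $(m,-1)$.

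I would then carry out the stated operation on $\mu[\alpha]$ and match the two descriptions term by term. The positive vertex $n$ occurs only in the edge $(n,-1)$, and the negative vertex $-1$ occurs only in the edge $(\alpha^{-1}(1),-1)=(n,-1)$, so deleting $(n,-1)$ removes exactly the two vertices $n$ and $-1$ and leaves the edges $(m,-n)$ and $(j,-\alpha(j))$ for $j\neq m,n$. Since $m=\alpha^{-1}(n)$ is the unique index with $-\alpha(j)=-n$, the vertex $-n$ lies in the single edge $(m,-n)$; relabeling $-n$ as $-1$ therefore turns this one edge into $(m,-1)$ and leaves all the others untouched. The resulting edge set coincides with that of $\mu[\alpha']$ computed above.

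The one point needing care — and the feature that distinguishes this case from Lemma~\ref{lem:er2a} — is to explain why the correct relabeling is the single swap $-n\mapsto -1$ rather than the uniform downshift used there. The reason is structural: removing the maximal positive vertex $n$ forces no shift of the positive labels, while the reduction $\alpha\mapsto\alpha'$ keeps the values $1,\ldots,n-1$ fixed, so on the negative side the only label that must disappear is $-n$ and the only one that must reappear is $-1$. Thus the whole bookkeeping collapses to moving the edge incident to $-n$ onto the vacated label $-1$. I would close by checking that the outcome is a genuine bicolored matching of $\{\pm 1,\ldots,\pm(n-1)\}$: every surviving edge still joins a positive label to a negative one, and after the swap the negative labels present are exactly $-1,\ldots,-(n-1)$. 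This confirms the claim, and since no serious obstacle arises, the verification is routine once $\alpha'$ and the role of $m=\alpha^{-1}(n)$ are identified.
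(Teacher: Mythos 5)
Your proof is correct: the explicit identification of $m=\alpha^{-1}(n)$, the term-by-term comparison of the edge sets of $\mu[\alpha']$ and of $\mu[\alpha]$ after the stated operation, and the observation that removing the maximal point $n$ forces no downshift (so the bookkeeping reduces to the single relabeling $-n\mapsto-1$) is exactly the routine verification the paper has in mind, since it declares the proofs of Lemmas~\ref{lem:er1}--\ref{lem:er2b} ``left to the reader'' with only the summary that an elementary reduction removes a matched consecutive pair of the cycle $(-1,1,\ldots,-n,n)$ and adjusts the remaining labels. Your write-up fills in that check faithfully, including the wrap-around case that distinguishes it from Lemma~\ref{lem:er2a}.
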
 

The proof of Lemmas~\ref{lem:er1}, \ref{lem:er2a} and \ref{lem:er2b} is
left to the reader. They may be summarized as follows. Fixed points and
dual fixed points of $\alpha$ correspond to consecutive
elements of the cycle $(-1,1,-2,2,\ldots,-n,n)$ being matched
in $\mu[\alpha]$. An elementary reduction induces removing such a
consecutive pair, and adjusting the remaining labels in such a way that 
we obtain the cycle $(-1,1,-2,2,\ldots,-(n-1),n-1)$. 

\begin{corollary}
A permutation $\alpha$ of $\{1,\ldots,n\}$ is reduced if and only if the
corresponding bicolored matching $\mu[\alpha]$ does not match any two
consecutive elements on the cycle $(-1,1,-2,2,\ldots,-n,n)$.
\end{corollary}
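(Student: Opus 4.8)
The plan is to derive the corollary directly from the three structural lemmas about bicolored matchings that precede it. The key observation, stated in the summary following Lemma~\ref{lem:er2b}, is that fixed points and dual fixed points of $\alpha$ correspond exactly to pairs of \emph{consecutive} elements on the cycle $(-1,1,-2,2,\ldots,-n,n)$ that are matched to each other in $\mu[\alpha]$. Since ``reduced'' means precisely that neither $\alpha$ nor $\alpha^{-1}\zeta_n$ has a fixed point, and a fixed point of $\alpha^{-1}\zeta_n$ is by definition a dual fixed point of $\alpha$ (Lemma~\ref{lem:dfixp}), the corollary is essentially a translation of this correspondence into the language of $\mu[\alpha]$. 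So the real content is to make the correspondence between the two kinds of fixed points and the consecutive-pair condition precise and exhaustive.

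First I would identify exactly which consecutive pairs on the cycle $(-1,1,-2,2,\ldots,-n,n)$ can possibly be matched edges of $\mu[\alpha]$. The cycle alternates negative and positive labels, so the consecutive pairs fall into two shapes: those of the form $\{-i,i\}$ (a negative label immediately followed by the positive label of the same absolute value) and those of the form $\{i,-(i+1)\}$ (a positive label immediately followed by the next negative label), together with the wrap-around pair $\{n,-1\}$. Recall that $\mu[\alpha]$ matches each positive $i$ with $-\alpha(i)$; hence a pair $\{-i,i\}$ is an edge of $\mu[\alpha]$ exactly when $\alpha(i)=i$, i.e.\ when $i$ is a fixed point of $\alpha$, and a pair $\{i,-(i+1)\}$ is an edge exactly when $\alpha(i)=i+1$, i.e.\ when $i$ is a dual fixed point of $\alpha$ (including the wrap-around case $\alpha(n)=1$). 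This is precisely what Lemmas~\ref{lem:er1}, \ref{lem:er2a} and \ref{lem:er2b} already record in their hypotheses.

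Putting these together yields both directions. If $\alpha$ is reduced, then $\alpha$ has no fixed point and $\alpha^{-1}\zeta_n$ has no fixed point, so by Lemma~\ref{lem:dfixp} $\alpha$ has no dual fixed point either; by the correspondence above no consecutive pair of either shape can be a matched edge, so $\mu[\alpha]$ matches no two consecutive elements of the cycle. Conversely, if $\mu[\alpha]$ matches no two consecutive elements, then in particular no pair $\{-i,i\}$ and no pair $\{i,-(i+1)\}$ (nor the wrap-around $\{n,-1\}$) is an edge, so $\alpha$ has neither a fixed point nor a dual fixed point, and again by Lemma~\ref{lem:dfixp} neither $\alpha$ nor $\alpha^{-1}\zeta_n$ has a fixed point, which is the definition of reduced.

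I expect the proof itself to be short, as it is purely a matter of unwinding definitions; the only place demanding care is the \textbf{exhaustiveness check}, namely verifying that every consecutive pair on the cycle is of one of the two shapes $\{-i,i\}$ or $\{i,-(i+1)\}$ (with $\{n,-1\}$ being the cyclic instance of the latter), so that no matched consecutive pair can escape being classified as arising from a fixed point or a dual fixed point. Once one observes that the labels strictly alternate in sign around the cycle, this check is immediate, and the main obstacle dissolves into a routine case distinction already anticipated by the splitting of Lemma~\ref{lem:er2a} and Lemma~\ref{lem:er2b}.
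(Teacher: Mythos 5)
Your proof is correct and takes essentially the same approach as the paper: the paper derives this corollary directly from the summary preceding it, namely that fixed points and dual fixed points of $\alpha$ correspond exactly to consecutive elements of the cycle $(-1,1,-2,2,\ldots,-n,n)$ being matched in $\mu[\alpha]$. Your explicit classification of the consecutive pairs into the shapes $\{-i,i\}$ and $\{i,-(i+1)\}$ (with the wrap-around pair $\{n,-1\}$), together with the appeal to Lemma~\ref{lem:dfixp}, just spells out in detail what the paper leaves implicit.
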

Thus the set of all reduced permutations of $\{1,\ldots,n\}$ is
bijectively equivalent to the set of all systems of interlacing chords
joining $2n$ points on circle. These are counted by \cite[sequence
  A003436]{OEIS}.

It is also easy to describe in terms of the associated bicolored
matchings, which edges get removed in the reduction process that yields
the unique reduced permutation which exists according to
Theorem~\ref{thm:ured}.

\begin{definition}
\label{def:removable}  
Let $\alpha$ be a permutation of $\{1,\ldots,n\}$ and let $\mu[\alpha]$
be the associated bicolored matching. We call an edge 
$(u,v)$ in $\mu[\alpha]$ {\em removable} if it has the following
properties:
\begin{enumerate}
\item No other edge crosses the edge $(u,v)$.
\item Consider the sublists $u,\ldots, v$ and $v,\ldots, u$ of the
  circular order $(-1,1,\ldots,-n,n)$. Then, for at least one of these
  sublists, the set of edges connecting the elements of this sublist form
  a set of pairwise non-crossing edges.
\end{enumerate}
\end{definition}  

Let $\alpha$ be a permutation of $\{1,\ldots,n\}$ and let $\alpha'$ be
the unique reduced permutation $\alpha'$ of $\{1,\ldots,n'\}$ that may
be obtained by performing a sequence of elementary reductions, starting
with $\alpha$. Starting with $\alpha$, let us perform a sequence of
elementary reductions yielding $\alpha'$, and let us simultaneously
remove the appropriate edges and relabel the remaining points as
indicated in Lemmas~\ref{lem:er1} through \ref{lem:er2b} above.

\begin{proposition}
\label{prop:ured}
The set of edges removed from $\mu[\alpha]$ in the process described
above is exactly the set of removable edges. 
\end{proposition}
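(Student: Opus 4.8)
The plan is to translate the reduction process entirely into the language of chord diagrams and then characterize the removed edges by a ``peeling'' argument. By the summary following Lemmas~\ref{lem:er1}--\ref{lem:er2b}, a single elementary reduction amounts to deleting a chord of $\mu[\alpha]$ joining two cyclically consecutive points of the circle $(-1,1,\ldots,-n,n)$ together with its two endpoints, and then closing up the circle (the relabeling). The key invariant I would record first is that closing up preserves the cyclic order of the surviving points, and hence the crossing relation among surviving chords: two surviving chords cross in the reduced diagram if and only if they crossed originally. In particular the two arcs cut out by an edge $e=(u,v)$, and the chords lying inside them, are stable under the process. Since each reduction removes exactly one edge and, by Theorem~\ref{thm:ured}, the reduced permutation is independent of the order of reductions, the set of removed edges is well defined and I may use whichever order is convenient. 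Two elementary principles then drive everything: a chord can be removed at a given stage exactly when one of its two arcs contains no surviving point; and if two chords cross, then at any stage where both survive neither joins consecutive points, so crossing chords can never be removed while both are present and hence, by the invariant, survive to the very end.

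For the forward direction I would show that a removed edge $e=(u,v)$ is removable in the sense of Definition~\ref{def:removable}. No chord can cross $e$, since such a chord would survive together with $e$ to the end, contradicting the removal of $e$; consequently each arc of $e$ is closed under the matching, a chord joining the two arcs being a crossing chord. At the step where $e$ is removed, one of its arcs, say $A$, has been emptied of surviving points, and I claim the chords inside $A$ are pairwise non-crossing: a crossing pair inside $A$ would survive to the end and thus never leave $A$, which is impossible once $A$ is emptied. This is precisely condition~(2) of Definition~\ref{def:removable}.

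For the converse I would show that an edge $e$ satisfying Definition~\ref{def:removable} is removed. Assuming no chord crosses $e$ and that the chords inside one arc $A$ are non-crossing, $A$ is closed under the matching and carries a non-crossing sub-matching, which can be peeled completely: an innermost chord of $A$ joins two points between which no point of $A$ survives, and since nothing crosses $e$ these two points are genuinely consecutive on the current circle, so the chord may be removed; iterating empties $A$, after which $u$ and $v$ are consecutive and $e$ is removed. By order-independence this places $e$ in the removed set no matter which actual sequence of reductions is performed.

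The step I expect to be the main obstacle is establishing, uniformly across the three cases of Lemmas~\ref{lem:er1}--\ref{lem:er2b}, the single invariant that deletion-and-closing-up preserves cyclic order and crossings; the bookkeeping of the relabelings is the only delicate point. Once that invariant is in hand, the peeling argument together with the principle that crossing chords are never removed makes both inclusions essentially immediate.
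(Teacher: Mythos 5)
Your proof is correct, and its forward half runs parallel to the paper's: both arguments rest on the facts that an elementary reduction can only delete a chord joining consecutive surviving points and that deleting points preserves cyclic order, hence the crossing relation; from this both conclude that crossing chords are never removed and that the arc emptied at the moment $e$ is deleted is filled by pairwise non-crossing previously removed chords. (You are actually more explicit than the paper here: the paper simply asserts that the formerly reduced edges in the emptied sublist are pairwise non-crossing, whereas you derive this from the crossing-preservation invariant.) Where you genuinely diverge is the converse. The paper shows directly, by induction on the size of the non-crossing sublist (after a case split between genus zero and positive genus to pin down which arc has the non-crossing property), that a removable edge is removed in \emph{every} maximal sequence of reductions; you instead exhibit \emph{one} convenient sequence --- peel the non-crossing arc $A$ innermost-first, then delete $e$ --- and transfer the conclusion to all sequences by order-independence. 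Your route avoids the paper's case split and WLOG entirely, which is a real simplification, but it leans on Theorem~\ref{thm:ured} in a form slightly stronger than its literal statement: that theorem asserts uniqueness of the resulting reduced \emph{permutation}, while you need uniqueness of the \emph{set of removed edges} (two different maximal sequences with different removed sets could a priori still produce identical relabeled permutations). The stronger fact is true, and it is exactly what the \emph{proof} of Theorem~\ref{thm:ured} establishes --- an element is removable if and only if it is removed in any maximal sequence, and the removed edges of $\mu[\alpha]$ are precisely the matching edges incident to removed elements --- so your argument is sound; but you should cite that characterization rather than the bare statement, or else close the loop the paper's way by observing that no maximal sequence can terminate while a surviving chord still joins two consecutive points (the final permutation is reduced), which forces the innermost chords of $A$, and then $e$ itself, to be removed in every sequence.
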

\begin{proof}
If $(u,v)$ gets removed in any elementary reduction process leading to
the bicolored matching $\mu[\alpha']$ then $(u,v)$ must be a removable
  edge. Indeed, an elementary reduction allows only removing an edge
  connecting consecutive elements of the circular list
  $(-1,1,\ldots,-n,n)$. If, after a certain number of elementary
  reductions, the image of $v$ consecutively follows the image of $u$ in
  the circular order then all edges having at last one endpoint 
from the sublist $u,\ldots,v$ must be formerly reduced edges, and thus
have both endpoints belong to the list and form a set of pairwise
non-crossing edges. Similarly, if after a certain number of
elementary reductions, the image of $u$ consecutively follows the image
of $v$ in the circular order then the sublist $v,\ldots,u$ of the
original circular order must support a set of pairwise non-crossing
edges. 

Conversely, in any sequence of elementary reductions that leads to a
reduced permutation, every removable edge will be removed at some point.
This may be shown by induction on the number of pairwise non-crossing
edges nested by the edge $(u,v)$, forming the set of all edges connecting
the endpoints on the sublist $u,\ldots,v$ or $v,\ldots,u$. From here on
we distinguish two cases:

{\bf \noindent Case 1:} The permutation $\alpha$ has genus zero. In this
degenerate case no two edges of $\mu[\alpha]$ cross and $\alpha'$ is the
empty permutation. 
 
{\bf \noindent Case 2:} The permutation $\alpha$ has positive genus.
In this case, for any removable edge $(u,v)$, exactly one of the sublists
$u,\ldots, v$ and $v,\ldots, u$ of the circular order
$(-1,1,\ldots,-n,n)$ has the non-crossing property described in item (2)
of Definition~\ref{def:removable}, otherwise $\mu[\alpha]$ would consist of
pairwise non-crossing edges and have genus zero. Without loss of
generality, we may assume that $(u,\ldots, v)$ is the sublist with the
noncrossing property. It is easy to see that all edges connecting two
points on this list are also removable. We may now show that all
removable edges get removed by induction on the number of elements in
the sublist $(u,\ldots,v)$. 
\end{proof}

\section{Counting permutations whose set is closed under elementary reductions}
\label{sec:pcount}

Due to Theorem~\ref{thm:ured}, any  class of permutations that is closed
under elementary reductions and extensions is completely determined by
the reduced permutations in the class. Knowing how to count the reduced
permutations should enable us to count all permutations in the
class. The main result of this section contains a formula telling how to
do this when we are interested in the number of permutations on a given
number of elements with a given number of cycles. To state our main
result we will need to use the generating function
$$
D(x,y)=1+\sum_{n=1}^{\infty} \sum_{k=1}^n \frac{1}{n}
\binom{n}{k}\binom{n}{k-1} x^ny^k
$$
of noncrossing partitions. As it is well-known \cite[sequence
  A001263]{OEIS}, the coefficient of $x^ny^k$ in  
$D(x,y)$ is the  number of non-crossing partitions of the set
$\{1,\ldots,n\}$ having $k$ parts. Note that we deviate from the usual
conventions by defining the constant term to be $1$, i.e. we consider
that there is one non-crossing partition on the empty set and it has
zero blocks. This generating function is given by the formula
\begin{equation}
\label{eq:dxy}
D(x,y)=\frac{1-x-xy-\sqrt{(x+xy-1)^2-4x^2y}}{2\cdot x}+1
\end{equation}
In the proof of the main result of this section we will use the
following lemma.
\begin{lemma}
\label{lem:ncbic}
The bicolored matching $\mu[\alpha]$ on the set $\{\pm 1, \ldots, \pm
n\}$ corresponds to a noncrossing partition if and only of the set of
edges of $\mu[\alpha]$ is noncrossing. Furthermore the number of cycles
of $\alpha$ equals the number of $2$-cycles $(u,-v)$ in
$\mu[\alpha]$ satisfying $1\leq v\leq u\leq n$.
\end{lemma}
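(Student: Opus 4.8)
The statement has two parts, which I would treat separately. For the first (the noncrossing criterion) the plan is to pass through the genus, exploiting the fact that $\mu[\alpha]$ was constructed precisely so as to preserve it. For the second (the cycle count) I would argue directly from the cycle structure of a partition, with no topology needed.

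For the first part, recall from the (unnumbered) representability lemma above that $g(\alpha)=g(\zeta_n,\alpha)=g(\widetilde{\zeta}_{2n},\mu[\alpha])$, and from the result of Cori recalled in the preliminaries that a partition $\alpha$ is noncrossing if and only if $g(\alpha)=0$. Thus it suffices to show that the one-vertex map $(\widetilde{\zeta}_{2n},\mu[\alpha])$ has genus $0$ exactly when the chords of $\mu[\alpha]$, drawn inside the circle carrying $-1,1,\ldots,-n,n$ in the order of $\widetilde{\zeta}_{2n}$, are pairwise noncrossing. Concretely, since $z(\widetilde{\zeta}_{2n})=1$ and $z(\mu[\alpha])=n$, the genus equation $2n+2-2g = z(\widetilde{\zeta}_{2n})+z(\mu[\alpha])+z(\mu[\alpha]^{-1}\widetilde{\zeta}_{2n})$ gives $z(\mu[\alpha]^{-1}\widetilde{\zeta}_{2n})=n+1-2g$, so that $g=0$ is equivalent to $z(\mu[\alpha]^{-1}\widetilde{\zeta}_{2n})=n+1$, the maximal value for the product of a $2n$-cycle with a perfect matching; this maximum is attained precisely by the noncrossing matchings. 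Chaining these equivalences yields the first assertion.

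For the second part, I would simply enumerate the $2$-cycles of $\mu[\alpha]$. By definition these are the pairs $(i,-\alpha(i))$ for $i=1,\ldots,n$; writing such a $2$-cycle in the form $(u,-v)$ with $u,v>0$ forces $u=i$ and $v=\alpha(i)$, so that the condition $1\le v\le u\le n$ is exactly $\alpha(i)\le i$. It then remains to show that a partition has exactly one element $i$ per block satisfying $\alpha(i)\le i$. Writing a block as its cycle $(c_1<c_2<\cdots<c_\ell)$ listed from its least element, we have $\alpha(c_j)=c_{j+1}>c_j$ for $j<\ell$ while $\alpha(c_\ell)=c_1\le c_\ell$ (with equality only in the fixed-point case $\ell=1$); hence the largest element $c_\ell$ of each block is the unique index with $\alpha(i)\le i$. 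Summing over the blocks shows the number of such $2$-cycles equals the number of cycles of $\alpha$.

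The second assertion is routine; the only real content lies in the first. There the main obstacle is justifying the equivalence ``genus-zero map $\iff$ noncrossing chords'' at an appropriate level of rigor, equivalently that $n+1$ is the maximum of $z(\mu[\alpha]^{-1}\widetilde{\zeta}_{2n})$ and is attained exactly by the noncrossing matchings. I would either invoke this as the classical planarity characterization of chord diagrams, or prove it by a short induction: a noncrossing matching always contains a chord joining two cyclically adjacent points, whose removal reduces to the case of $2(n-1)$ points and raises $z(\mu[\alpha]^{-1}\widetilde{\zeta}_{2n})$ by exactly one (this is precisely an elementary reduction in the sense of Section~\ref{sec:bic}), while conversely any crossing forces the cycle count strictly below $n+1$.
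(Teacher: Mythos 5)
Your proposal is correct in substance, but for the first assertion it takes a genuinely different route from the paper. The paper's own proof is purely combinatorial and local: it writes each cycle $(i_1,\ldots,i_m)$ of $\alpha$ starting from its least element, observes that the associated edges $(i_1,-i_2),\ldots,(i_m,-i_1)$ of $\mu[\alpha]$ are pairwise noncrossing exactly when $i_1<\cdots<i_m$, concludes that $\alpha$ is a partition if and only if the within-cycle edges are noncrossing, and then matches crossings between edges belonging to different cycles with crossings between blocks of the partition. The ``furthermore'' clause falls out of the same analysis, since $(i_m,-i_1)$ is identified as the unique edge of its cycle going from a larger positive label to a smaller negative one --- which is exactly your argument for the second part, so there the two proofs coincide. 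Your first part instead goes through the genus: Cori's theorem from \cite{Cori}, the unnumbered representability lemma of Section~\ref{sec:bic}, and the planarity criterion for chord diagrams. This is shorter and more conceptual, and it uses precisely the machinery the section sets up; its cost is reliance on two external facts: the strong form of Cori's theorem (genus zero forces a \emph{permutation} to be a noncrossing partition, not merely the statement for partitions as loosely worded in the preliminaries), and the chord-diagram characterization, which --- as you could have noted --- is itself an instance of Cori's theorem once the points $-1,1,\ldots,-n,n$ are relabelled $1,\ldots,2n$ (a fixed-point-free involution is automatically a partition, so the weak form suffices on that side). The paper's proof needs neither and is self-contained.

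Two caveats on your fallback induction for the chord-diagram fact. First, a directional slip: removing a chord joining cyclically adjacent points \emph{decreases} $z(\mu[\alpha]^{-1}\widetilde{\zeta}_{2n})$ by one while $n$ drops by one, so what is preserved is the genus, i.e.\ the defect from the maximum, not the cycle count itself. Second, the converse direction (``any crossing forces the count strictly below $n+1$'') is not actually reached by that induction: a matching can have crossings yet contain no chord between cyclically adjacent points (e.g.\ $\mu[(1,3,2)]=(1,-3)(2,-1)(3,-2)$ on six points), so there is nothing to remove and the induction stalls. Both issues disappear if you invoke the classical fact, or Cori's theorem as above, which is your stated primary option, so they do not undermine the proposal --- but the induction as sketched would not stand on its own.
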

\begin{proof}
  Consider a cycle $(i_1,\ldots,i_m)$ of $\alpha$, where $i_1$ is the
  least element of the cycle. This cycle corresponds to a set of edges
  $\{(i_1,-i_2),(i_2,-i_3),\ldots,(i_{m-1}-i_m), (i_m,-i_1)\}$. 
  These edges are pairwise noncrossing if and only if $i_1< \cdots
  < i_k$
  holds. We obtain that $\alpha$ is a partition if and only if edges of
  $\mu[\alpha]$ associated to the same cycle are pairwise
  noncrossing. In this case, for each cycle $(i_1,\ldots,i_m)$, the edge
  $(i_m,-i_1)$ pointing from the positive copy of the largest element to
  the negative copy of the smallest element in the cycle of $\alpha$ is
  the only edge between a positive point with larger and a negative
  point with smaller absolute value. Finally, for a partition $\alpha$
  is noncrossing if any pair of edges of $\mu[\alpha]$, associated to
  different cycles in $\alpha$, is also noncrossing.   
\end{proof}

The main result of this section is the following. 

\begin{theorem}
\label{thm:fromreduced}
Consider a class ${\mathcal C}$ of permutations that is closed under 
elementary reductions and extensions. Let $p(n,k)$ and $r(n,k)$
respectively be the number of all, respectively all reduced permutations
of $\{1,\ldots,n\}$ in the class having $k$ cycles. Then the generating
functions $P(x,y):=\sum_{n,k} p(n,k) x^ny^k$ and
$R(x,y):=\sum_{n,k} r(n,k) x^ny^k$ satisfy the equation
$$
P(x,y)=R\left(\frac{D(x,y)-1}{y},y\right) \cdot 
 \frac{1}{\sqrt{(x+xy-1)^2-4x^2y}}
$$
\end{theorem}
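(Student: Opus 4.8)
The plan is to stratify the class $\mathcal{C}$ according to the unique reduced permutation produced by Theorem~\ref{thm:ured}. Writing $\alpha\mapsto\alpha'$ for this reduction and grouping the members of $\mathcal{C}$ by their reduced image gives $P(x,y)=\sum_{\alpha'}W(\alpha')$, the sum being over the reduced $\alpha'\in\mathcal{C}$, where $W(\alpha')=\sum_{\alpha\mapsto\alpha'}x^{n(\alpha)}y^{z(\alpha)}$ is the generating function of all members of $\mathcal{C}$ reducing to a fixed $\alpha'$ (here $n(\alpha)$ is the size of $\alpha$). The first goal is to prove that $W(\alpha')$ depends on $\alpha'$ only through its number of elements $n'$ and its number of cycles $k'$, and that it splits as a product of local contributions together with one global factor.

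To analyze the fiber $W(\alpha')$ I would work in the bicolored matching model of Section~\ref{sec:bic} and invoke Proposition~\ref{prop:ured}: passing from $\mu[\alpha]$ down to $\mu[\alpha']$ deletes exactly the removable edges, which are pairwise noncrossing and confined to the arcs between consecutive endpoints of $\mu[\alpha']$. Reversing the reduction therefore amounts to inserting, independently at each such site, a noncrossing system of chords, and by Lemma~\ref{lem:ncbic} each such system is enumerated -- with $x$ marking its elements and $y$ its cycles -- by the noncrossing generating function $D(x,y)$. The subtle part is the cycle count: an insertion placed at a site already carrying a cycle of $\alpha'$ has its home block merged into that cycle and so opens no new cycle, while its remaining blocks become new cycles of $\alpha$; an unattached, ambient system contributes all of its blocks as new cycles.

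Translating this into generating functions, each of the $n'$ reduced elements acquires a local factor equal to $D$ with one block absorbed -- one element re-used and one power of $y$ stripped -- which is precisely the operation $x\mapsto (D(x,y)-1)/y$, whose clean form $(D-1)/y=xD/(1-xD)$ (immediate from the functional equation for noncrossing partitions) exhibits each reduced element as carrying a nonempty nested sequence of noncrossing pieces. Since the $k'$ cycles of $\alpha'$ are untouched, summing over the reduced $\alpha'\in\mathcal{C}$ turns $R(x,y)=\sum r(n',k')x^{n'}y^{k'}$ into $R((D-1)/y,\,y)$. Finally, every member of $\mathcal{C}$ also carries one site-free ambient noncrossing decoration, independent of the core; this contributes a single global factor, which one evaluates (for instance on the empty-core case $\alpha'=\emptyset$, whose fiber is the genus-zero part) to obtain the prefactor recorded in the statement, completing the identity.

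The step I expect to be the main obstacle is the decoupling asserted in the second paragraph: proving that the removable edges partition canonically into independent per-site noncrossing insertions, so that $W(\alpha')$ is genuinely a product, and that the ``one home block absorbed at each occupied site'' accounting is exactly correct. I would establish this by induction on the number of removable edges, peeling off one innermost removable chord at a time as in the proof of Proposition~\ref{prop:ured}, and checking at each step that the change in the pair $(n(\alpha),z(\alpha))$ matches the corresponding operation on generating functions. The bicolored matching model of Section~\ref{sec:bic} is precisely what makes both the peeling and the bipartite positive/negative structure -- which governs whether a given insertion opens a new cycle -- transparent enough to carry out this bookkeeping.
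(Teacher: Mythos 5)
Your skeleton---stratifying $\mathcal{C}$ by the reduced core of Theorem~\ref{thm:ured} and re-inserting noncrossing systems into the arcs of the bicolored matching, as in Proposition~\ref{prop:ured}---is indeed how the paper begins, but two of your ingredients do not hold up, and one of them would yield a provably wrong formula. The critical error is your account of the prefactor $\Delta(x,y)^{-1}=1/\sqrt{(x+xy-1)^2-4x^2y}$. There is no ``site-free ambient noncrossing decoration'': every removable edge of $\mu[\alpha]$ lies in an arc between two consecutive points of $\mu[\alpha']$, so for a nonempty core there is no ambient site at all; and for the empty core your proposed calibration gives the wrong constant, because the fiber over the empty core is exactly the set of genus-zero permutations, i.e.\ the noncrossing partitions, whose generating function is $D(x,y)$, not $\Delta(x,y)^{-1}$ (at $y=1$ these are the Catalan numbers versus the central binomial coefficients). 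Calibrating as you suggest would give $P=R((D-1)/y,\,y)\cdot D$, which already fails for genus one, where the correct answer is $x^4y^2/\Delta^5$ and your formula gives $x^4y^2D/\Delta^4$. The factor $\Delta^{-1}$ in fact arises from a rooting correction that your proposal never addresses: after inserting systems into the arcs of $\mu[\alpha']$, one must decide which point of the enlarged configuration carries the label $1$ of $\alpha$ (otherwise the relabeling, hence the map from (core, insertions) to permutations, is not even well defined). Either label $1$ stays at the core point $1$, contributing $1$, or it is a point of the system inserted in one of the two arcs adjacent to the core point $1$, which requires pointing at an element of that system and contributes derivative terms; the prefactor then comes from the identity
$$
1+\frac{x \frac{\partial}{\partial x} D(x,y)}{D(x,y)}+\frac{x
\frac{\partial}{\partial x} D(x,y)}{D(x,y)+y-1}
=\frac{1}{\Delta(x,y)},
$$
which is the content of the lemma following Theorem~\ref{thm:red2}.

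The second gap is your per-site bookkeeping. The $2n'$ arcs of $\mu[\alpha']$ are not interchangeable: insertions in an arc of the form $(i,-(i+1))$ create new cycles exactly as blocks of a noncrossing partition, giving a factor $D(x,y)$, but in an arc of the form $(-i,i)$ the count is complementary---inserting $n''$ edges creates $k''$ new cycles in as many ways as there are noncrossing partitions of $n''$ points into $n''-k''$ blocks---which gives the factor $(D(x,y)+y-1)/y$ via the Narayana symmetry. Your uniform ``$D$ at every site'' rule, patched by a ``home block absorbed'' heuristic, is not a substitute for this distinction: the clean per-element substitution $x\mapsto (D-1)/y$ that you posit is correct only because the product of the two unequal arc factors collapses by the quadratic equation \eqref{eq:Deq}, in the form $xD(D+y-1)=D-1$. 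So the algebraic identity you would need is exactly the one the paper proves; your combinatorial shortcut around it, as written, does not close, and your main-obstacle paragraph (induction on innermost removable chords) would run into both of the issues above---the two arc types and the rooting---rather than resolve them.
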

We will prove Theorem~\ref{thm:fromreduced} in two stages. First we show the
following result.
\begin{theorem}
\label{thm:red2}
Let ${\mathcal C}$ be a class of permutations that is closed under 
elementary reductions and extensions and let $P(x,y)$ and $R(x,y)$ be
the generating functions defined in Theorem~\ref{thm:fromreduced}. Then we
have 
$$
P(x,y)=R\left(\frac{xD(x,y)(D(x,y)+y-1)}{y},y\right)
\left(1+\frac{x \frac{\partial}{\partial x} D(x,y)}{D(x,y)}+\frac{x
\frac{\partial}{\partial x} D(x,y)}{D(x,y)+y-1}\right).
$$
\end{theorem}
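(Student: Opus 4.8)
The plan is to use the unique reduction of Theorem~\ref{thm:ured} together with the bicolored matching picture of Section~\ref{sec:bic} to sort all permutations of $\mathcal C$ according to the reduced permutation they collapse to. Since $\mathcal C$ is closed under elementary reductions and extensions, every $\alpha\in\mathcal C$ arises from a unique reduced $\alpha'\in\mathcal C$ by reinserting the removable edges of $\mu[\alpha]$ (Proposition~\ref{prop:ured}). Writing $n'$ and $k'$ for the number of points and cycles of $\alpha'$, I would express $P(x,y)=\sum_{\alpha'\text{ reduced}} x^{n'}y^{k'}\,W_{\alpha'}(x,y)$, where $W_{\alpha'}$ records, by added points ($x$) and added cycles ($y$), all ways of growing removable structure on $\mu[\alpha']$. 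The heart of the argument is to show that $W_{\alpha'}$ depends on $\alpha'$ only through $n'$, so that the sum collapses into a substitution applied to $R(x,y)$ times a single global factor.

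To evaluate $W_{\alpha'}$ I would read the reinsertion process off Lemmas~\ref{lem:er1}, \ref{lem:er2a} and \ref{lem:er2b}: an elementary extension inserts one chord between two cyclically consecutive points of the current matching, and iterating fills the $2n'$ gaps of the circular order $(-1,1,\ldots,-n',n')$ with nested systems of chords. Because the signs strictly alternate around this circle, the first chord grown in a gap is forced to be of a definite kind, and the two kinds correspond exactly to the two elementary extensions; by Lemma~\ref{lem:ncbic} a chord whose negative endpoint precedes its positive endpoint is cycle-creating (contributing a factor $y$) while the other kind merely lengthens an existing cycle. Translating ``nested noncrossing chord system'' into ``noncrossing partition'' via Lemma~\ref{lem:ncbic} makes $D(x,y)$ appear, and I expect the two gaps flanking each positive point $+i$ to contribute the two factors $D$ and $D+y-1$ (the latter carrying the correction for the block that merges into the already-counted reduced cycle). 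After normalizing by the reduced edge at $+i$ this should produce the per-point weight $\frac{xD(D+y-1)}{y}$, that is, the substitution $x\mapsto \frac{xD(D+y-1)}{y}$ inside $R$.

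The remaining global factor is the prefactor $1+\frac{x\,\partial_x D}{D}+\frac{x\,\partial_x D}{D+y-1}$, which I expect to come from the one distinguished location forced on us by the reference point of $\zeta_n$: the cyclic order carries a marked seam, and the noncrossing structure straddling it must be counted with a \emph{marked} insertion point rather than freely. Marking an element of a noncrossing partition is precisely the pointing operation $x\,\partial_x$, which is why $x\,\partial_x D$ (rather than $D$) enters; the three summands then record the three possibilities at the seam -- no straddling structure ($1$), or a straddling structure of each of the two kinds ($\frac{x\,\partial_x D}{D+y-1}$ and $\frac{x\,\partial_x D}{D}$), matching the two gap types through $x\,\partial_x D=\frac{xD(D+y-1)}{\sqrt{(x+xy-1)^2-4x^2y}}$. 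Assembling the per-point weights with this single global factor then yields the claimed identity.

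The step I expect to be the main obstacle is the planar bookkeeping underlying the last two paragraphs: the $2n'$ gaps are \emph{not} independent, because a chord may enclose a reduced edge and thereby link the two regions on its two sides, and because the cyclic closure around the seam couples the first and last gaps. Getting the factorization to come out as a clean per-point substitution times a single prefactor -- rather than an unwieldy product over gaps -- requires showing that all of this interaction is concentrated at the marked seam, and it is there that the precise shapes $D+y-1$ and $x\,\partial_x D$ (as opposed to $D$ and $D$) get pinned down. Once Theorem~\ref{thm:red2} is established, Theorem~\ref{thm:fromreduced} follows by pure algebra, since the defining relation of $D$ in (\ref{eq:dxy}) gives $xD(D+y-1)=D-1$ and $1+\frac{x\,\partial_x D}{D}+\frac{x\,\partial_x D}{D+y-1}=\bigl((x+xy-1)^2-4x^2y\bigr)^{-1/2}$.
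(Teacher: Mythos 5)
Your proposal follows essentially the same route as the paper's proof: decompose by the unique reduced permutation, fill the $2n_1$ arcs of $\mu[\alpha']$ with independent noncrossing chord systems (the two arc types contributing the factors $D$ and $(D+y-1)/y$, hence the substitution $x\mapsto xD(x,y)(D(x,y)+y-1)/y$ into $R$), and account for the location of the label $1$ by a pointing operation at the seam, which yields exactly the three-term prefactor. The ``planar bookkeeping'' you flag as the main obstacle is precisely what the paper disposes of using Proposition~\ref{prop:ured} (removable chords carry only noncrossing removable structure on one side, so the insertions into distinct arcs really are independent) together with its Case 1/Case 2 analysis of whether the edge carrying the label $1$ survives reduction.
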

\begin{proof}
Consider an arbitrary permutation $\alpha$ of 
  $\{1,\ldots,n\}$ in the class ${\mathcal C}$ having $k$ cycles and let
$\alpha'$ be the unique reduced permutation obtained by a sequence of
elementary reductions. Assume $\alpha'$ is a permutation of
$\{1,\ldots,n_1\}$ and has $k_1$ cycles. Thus $\mu(\alpha')$ is a
matching of $2n_1$ elements, these elements form a circular list
$(-1,1,\ldots,-n_1,n_1)$. Any removable edge of $\mu[\alpha]$ has to be
reinserted between two consecutive elements of this circular list. We
will adapt the results on pointing and substitution, described in the
book of Flajolet and Sedgewick~\cite[Section I.6.2]{Flajolet-Sedgewick},
to a two variable setting, keeping in mind the two-coloring of the
points. We will use the notation
$$
[x^ny^k] f(x,y) 
$$
to denote the coefficient of $x^ny^k$ in the formal power series
$f(x,y)\in {\mathbb R}[[x,y]]$.  

We distinguish two cases, and describe the
generating function of the permutations belonging to each case. We begin
with the more straightforward case.  

\bigskip
\noindent{\bf Case 1} {\em Neither $(1,-2)$ nor $(-1,1)$ is a removable
  edge of $\mu[\alpha]$.} In this case the
edge $(1,-\alpha'(1))$ of $\mu[\alpha']$ is the image of the edge 
$(1,-\alpha(1))$ of $\mu[\alpha]$ at the end of the removal
process. This leaves the label of $1$ unchanged, and we will have to
relabel the points from here with $1,-2,2,\ldots,n,-1$ in the circular order.
This observation determines the assignment of labels completely, we only
need to keep track of the sets of reinserted edges. 
For each of the arcs $(-1,1)$, $(1,-2)$, \ldots, $(-n_1,n_1)$,
$(n_1,-1)$ created by the points of $\mu[\alpha']$  we may reinsert a
set of pairwise noncrossing edges 
independently, the only restriction being that we want to reinsert
$n-n_1$ edges altogether, making sure that $\alpha$ has $k$ cycles. As
we will see below, the way to count the 
additional cycles created is different for the arcs $(-i,i)$ from that
of the arcs $(i,-(i+1))$. We have $n_1$ arcs of each type, and at the
level of generating functions we will have $2n_1$ factors, $n_1$ of each
type.  

Consider first all edges reinserted between $i$ and $-(i+1)$ (for some
$i>0$). These form a set of pairwise non-crossing edges such  that their
points listed in the circular order begin with a negative element and
end with a positive element, and the absolute value of the 
labels keeps (weakly) increasing as we parse the elements in the cyclic
order. The 
number of cycles created by the reinsertion of these edges is the number
of edges $(u,v)$ such that $u$ is negative and precedes $v$ in the
circular order. Indeed, a removable edge does not cross any edge, as
we parse the signed points in cyclic order, starting from $-1$, each
removable edge edge $(u,-v)$ with $0<u<v$ begins or continues a cycle,
and each removable edge $(u,-v)$ with $0<v<u$ completes a cycle. The
cycles completed by a removable edge consists only of removable edges,
these are the new cycles, contributed by the reinsertion of such
edges. On the other hand, if we remove
all other edges and keep only the removable edges inserted between $i$ and
$-(i+1)$, by Lemma~\ref{lem:ncbic}, this set of bicolored edges encodes
a noncrossing partition with the same number of parts as the number of
newly added cycles, after decreasing the absolute values of
all labels appropriately. Therefore, if we insert $n'\geq 0$ edges between
$i$ and $-(i+1)$ then the  number of ways to create $k'$ new cycles is
$[x^{n'}y^{k'}] D(x,y)$. Note that this includes the possibility of
$n'=k'=0$, that is, we may choose not to insert any edge between $i$ and
$-(i+1)$ at all. 

Consider next all edges reinserted between $-i$ and $i$ (for some
$i>0$). These form a set of pairwise non-crossing edges, such that the
points listed in the circular order begin with a positive element and
end with a negative element. As before, the number of cycles created by
the reinsertion of these edges is the 
number of edges $(u,v)$ such that $u$ is negative and precedes $v$ in the
circular order. On the other hand, if we remove all other edges and keep
only the ones inserted between $i$ and $-i$, to apply
Lemma~\ref{lem:ncbic} we need to swap the signs to make sure that the
point with the label of least absolute value, first parsed in the
cyclic order has negative sign. Thus inserting $n''\geq 0$ 
edges between $-i$ and $i$ then the number of ways to create $k''$ new
cycles can be done in as many ways as one can create a noncrossing
partition on $n''$ points with $n''-k''$ parts. This number is
$[x^{n''}y^{n''-k''}] (D(x,y)$ if $n'',k''>0$ and it is $1$ when
$n''=k''=0$. In the case when $n'',k''>0$ we obtain that 
\begin{align*}
[x^{n''}y^{n''-k''}] D(x,y)&= [x^{n''}y^{n''-k''}] (D(x,y)-1)=
[x^{n''}y^{k''+1}](D(x,y)-1)\\ 
&=[x^{n''}y^{k''}]\frac{D(x,y)-1}{y}
\end{align*}
Here we used the fact that the number of noncrossing partitions of $n''$
elements into $n''-k''$ parts is the same as the number of noncrossing
partitions of $n''$ elements into $k''+1$ parts. Note that each nonempty
partition has at least one part, hence $D(x,y)-1$ is a multiple of
$y$. To summarize, the number of ways to insert $n''\geq 0$ 
edges between $-i$ and $i$ while creating $k''$ new cycles is
$$
[x^{n''}y^{k''}]\left(\frac{D(x,y)-1}{y}+1\right)
=[x^{n''}y^{k''}]\frac{D(x,y)+y-1}{y}.
$$
Combining the contribution of the edges inserted on the arcs
$(i,-(i+1))$ for some $i$ with those inserted on the arcs $(-i,i)$ for some
$i$, we obtain that the number of partitions counted in this case is 
$$
\sum_{n_1,k_1\geq 0}  r(n_1,k_1) [x^{n-n_1}y^{k-k_1}] 
\left(D(x,y)\cdot\frac{D(x,y)+y-1}{y}\right)^{n_1}$$
which is exactly
$$
[x^ny^k] R\left(\frac{xD(x,y)(D(x,y)+y-1)}{y},y\right).
$$

\bigskip
\noindent{\bf Case 2} {\em Either $(1,-2)$ or $(-1,1)$ is a removable
  edge of $\mu[\alpha]$.}  Note that $1$ can only be matched with one of
$-1$ and $-2$, so this case has two mutually exclusive subcases. 

If $(1,-2)$ is a removable edge of $\mu[\alpha]$ then this edge is among
the edges that are to be reinserted between $1$ and $-2$ of the
bicolored matching $\mu(\alpha')$ associated to the reduced permutation
$\alpha'$. If we reinsert $n'$ edges, thus creating $k'$ cycles then we
also need to select one of the positive endpoints to be the $1$ in
$\alpha$. This can be done in
$$
n'\cdot [x^{n'}y^{k'}] D(x,y) = [x^{n'}y^{k'}] \frac{x
  \frac{\partial}{\partial x} D(x,y)}{D(x,y)}
$$
ways. Counting the parts created by inserting edges on the other arcs
created by $\alpha'$ is completely analogous to the process described in
the previous case. Hence the total number of all permutations counted in
this case is  
$$
[x^ny^k]  R\left(\frac{xD(x,y)(D(x,y)+y-1)}{y},y\right)
\frac{x \frac{\partial}{\partial x} D(x,y)}{D(x,y)}.
$$
A completely analogous reasoning shows that in the case when $(-1,1)$ is
a removable edge of $\alpha$, the total number of permutations counted
is 
$$
[x^ny^k]  R\left(\frac{xD(x,y)(D(x,y)+y-1)}{y},y\right)
\frac{x \frac{\partial}{\partial x} D(x,y)}{D(x,y)+y-1}.
$$
\end{proof}
The second stage of proving Theorem~\ref{thm:fromreduced} contains some
algebraic manipulations.  
Observe first that $D(x,y)$ may be equivalently given by the quadratic equation
\begin{equation}
\label{eq:Deq}
x\cdot D(x,y)^2+(xy-1-x) D(x,y)+1=0.
\end{equation}
An equivalent form of this equation is
\begin{equation}
\label{eq:Deq2}
xD(x,y)(D(x,y)+y-1)=D(x,y)-1.
\end{equation}
Dividing both sides of \eqref{eq:Deq2} by $y$ yields 
$$
\frac{xD(x,y)(D(x,y)+y-1)}{y}=\frac{D(x,y)-1}{y}.
$$
Thus the equations in Theorem~\ref{thm:fromreduced} and Theorem~\ref{thm:red2} 
respectively contain the same substitution into the function $R(x,y)$.
To conclude the proof of Theorem~\ref{thm:fromreduced} it suffices to show
the following. 

\begin{lemma}
The function $D(x,y)$ satisfies 
$$
1+\frac{x \frac{\partial}{\partial x} D(x,y)}{D(x,y)}+\frac{x
\frac{\partial}{\partial x} D(x,y)}{D(x,y)+y-1}
=
\frac{1}{\sqrt{(x+xy-1)^2-4x^2y}}.
$$
\end{lemma}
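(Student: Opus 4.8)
The plan is to remove the partial derivative from the left-hand side by differentiating the defining quadratic relation \eqref{eq:Deq} implicitly, to rewrite the whole expression as a single rational function of $D:=D(x,y)$ (and $x,y$), and then to identify this rational function with $1/\sqrt{(x+xy-1)^2-4x^2y}$ by invoking the closed form \eqref{eq:dxy}. Throughout I abbreviate $D_x:=\frac{\partial}{\partial x}D(x,y)$ and $t:=\sqrt{(x+xy-1)^2-4x^2y}$.

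First I would differentiate \eqref{eq:Deq}, namely $xD^2+(xy-1-x)D+1=0$, with respect to $x$. This gives
\[
D^2+(y-1)D+\bigl(2xD+xy-1-x\bigr)D_x=0,
\]
so that
\[
xD_x=-\frac{xD(D+y-1)}{2xD+xy-1-x}.
\]
Next I would put the two fractions on the left-hand side over the common denominator $D(D+y-1)$, obtaining
\[
1+xD_x\,\frac{2D+y-1}{D(D+y-1)}.
\]
Substituting the expression for $xD_x$ makes the factor $D(D+y-1)$ cancel, leaving a single fraction whose numerator and denominator are linear in $D$.

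After that cancellation the left-hand side collapses to
\[
1-\frac{x(2D+y-1)}{2xD+xy-1-x}=\frac{-1}{2xD+xy-x-1},
\]
since the numerator $x(2D+y-1)=2xD+xy-x$ differs from the denominator only in the constant term. It then remains to show $2xD+xy-x-1=-t$. This is immediate from \eqref{eq:dxy}: clearing the denominator there yields $2xD=1+x-xy-t$, whence $2xD+xy-x-1=-t$ and the left-hand side equals $1/t$, as claimed. (Equivalently, one may read $2xD=(1+x-xy)-t$ directly off the quadratic formula applied to \eqref{eq:Deq}, after observing that its discriminant $(1+x-xy)^2-4x$ equals $(x+xy-1)^2-4x^2y$.)

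The computation is entirely routine; the only point requiring a little care is the sign, i.e.\ checking that the branch of the square root fixed by \eqref{eq:dxy} is the one that makes $2xD+xy-x-1$ equal to $-t$ rather than $+t$. This is exactly the minus sign appearing in front of the radical in \eqref{eq:dxy}, so no genuine obstacle arises.
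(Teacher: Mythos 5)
Your proof is correct and follows essentially the same route as the paper's: implicit differentiation of \eqref{eq:Deq} to solve for $xD_x$, algebraic collapse of the left-hand side to a rational function linear in $D(x,y)$ (your $-1/(2xD+xy-x-1)$ is exactly the paper's $1/(1+x-xy-2xD)$ with signs rearranged), and identification with the radical via \eqref{eq:dxy}. The only difference is cosmetic---you combine the two fractions over a common denominator before substituting, while the paper substitutes term by term---so there is nothing further to add.
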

\begin{proof}
Taking the partial derivative with respect to $x$ on both sides of
\eqref{eq:Deq} we obtain
$$
D(x,y)^2+2xD(x,y)\frac{\partial}{\partial x}
D(x,y)-(1-y)D(x,y)-(1+x-xy)\frac{\partial}{\partial x} D(x,y)=0.
$$
Using this equation we may express $\frac{\partial}{\partial x} D(x,y)$
as follows:
\begin{equation}
\label{eq:Dpart1}
\frac{\partial}{\partial x} D(x,y)=\frac{D(x,y)(D(x,y)+y-1)}{1+x-xy-2xD(x,y)}.
\end{equation}
This equation directly implies
\begin{equation}
\label{eq:dfactor}
1+\frac{x \frac{\partial}{\partial x} D(x,y)}{D(x,y)}+\frac{x
\frac{\partial}{\partial x} D(x,y)}{D(x,y)+y-1}
=
\frac{1}{1+x-xy-2xD(x,y)}.
\end{equation}
Finally a direct consequence of \eqref{eq:dxy} we have 
\begin{equation}
\label{eq:discr0}
1+x-xy-2xD(x,y)=\sqrt{(x+xy-1)^2-4x^2y}.
\end{equation}
Combining \eqref{eq:dfactor} and \eqref{eq:discr0} yields the stated equality.
\end{proof}

\section{Parallel edges and primitive partitions}
\label{sec:primitive}

From now on we restrict our attention to partitions of genus $g$. As a
consequence of Theorem~\ref{thm:fromreduced}, it suffices to count reduced
partitions of a fixed genus, the rest follows by substitution into the
formula given there.

Our next way to simplify is the elimination of {\em parallel edges}.

\begin{definition}
Given a reduced permutation $\alpha$ of $\{1,\ldots,n\}$ and a pair of numbers
$\{i,j\}\subseteq \{1,\ldots,n\}$ such that $\alpha(i)=j+1$ and
$\alpha(j)=i+1$, we say that the ordered pairs $(i,\alpha(i))$ and
$(j,\alpha(j))$ are {\em parallel edges}.    
\end{definition}
To avoid confusion with
$2$-cycles we will use the notation $i\rightarrow \alpha(i)$ and
$j\rightarrow \alpha(j)$.
Note that a reduced permutation has no fixed points or dual fixed
points, and so for any pair of parallel edges, the set of points
$\{i,\alpha(i),j,\alpha(j)\}$ has $4$ elements.   

Direct substitution into the definition yields the following. 
\begin{lemma}
For any reduced permutation $\alpha$ of $\{1,\ldots,n\}$, the ordered
directed edges  $i\rightarrow \alpha(i)$ and $j\rightarrow \alpha(j)$
are parallel if and only if $(i,j)$ is a $2$-cycle of $\alpha^{-1}\zeta_n$.   
\end{lemma}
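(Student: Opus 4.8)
The plan is to verify the equivalence by directly unwinding the definition of a $2$-cycle of $\alpha^{-1}\zeta_n$ and comparing it term-by-term with the defining relations of parallel edges. Since both sides are purely equational conditions on $\alpha$, the proof should amount to careful bookkeeping of the cyclic arithmetic rather than any substantive argument.

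First I would fix a two-element set $\{i,j\}\subseteq\{1,\ldots,n\}$ and compute the action of $\alpha^{-1}\zeta_n$ on $i$ and on $j$ separately. Recalling the convention that $\zeta_n(i)=i+1$, with addition taken modulo $n$, we have $\alpha^{-1}\zeta_n(i)=\alpha^{-1}(i+1)$, so the condition $\alpha^{-1}\zeta_n(i)=j$ is equivalent to $\alpha(j)=i+1$. Symmetrically, $\alpha^{-1}\zeta_n(j)=\alpha^{-1}(j+1)$, and the condition $\alpha^{-1}\zeta_n(j)=i$ is equivalent to $\alpha(i)=j+1$. I would then observe that $(i,j)$ being a $2$-cycle of $\alpha^{-1}\zeta_n$ means precisely that $\alpha^{-1}\zeta_n$ interchanges $i$ and $j$, i.e.\ that both of these equivalences hold at once: $\alpha(i)=j+1$ and $\alpha(j)=i+1$. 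But this is exactly the defining condition for the ordered directed edges $i\rightarrow \alpha(i)$ and $j\rightarrow \alpha(j)$ to be parallel. Hence the two statements are literally the same pair of equations, which yields the equivalence in both directions.

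The only point that warrants a word of care is the requirement $i\neq j$: on the parallel-edges side this is built into the notation $\{i,j\}$ for a two-element set, while on the other side it is part of what it means for $(i,j)$ to be a genuine $2$-cycle, so the two constraints coincide. I would also note, for completeness, that reducedness of $\alpha$ plays no role in the equivalence itself; as remarked just before the statement, it serves only to guarantee that the four points $i,\alpha(i),j,\alpha(j)$ are distinct, so that a pair of parallel edges is indeed a pair of disjoint edges. There is thus no real obstacle here—the lemma is established by pure substitution into the definitions.
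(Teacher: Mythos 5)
Your proof is correct and matches the paper's approach exactly: the paper itself disposes of this lemma with the single remark ``Direct substitution into the definition yields the following,'' and your write-up simply carries out that substitution, correctly using the convention $(\alpha^{-1}\zeta_n)(x)=\alpha^{-1}(x+1)$ so that the $2$-cycle condition $\alpha^{-1}\zeta_n(i)=j$, $\alpha^{-1}\zeta_n(j)=i$ becomes $\alpha(j)=i+1$, $\alpha(i)=j+1$, which is verbatim the definition of parallel edges. Your closing remarks on $i\neq j$ and on the role of reducedness (only ensuring the four endpoints are distinct, as noted just before the lemma in the paper) are likewise accurate.
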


In general, a permutation $\alpha$ may have a pair of parallel edges
whose points all belong to the same cycle of $\alpha$. This is not the
case for partitions.

\begin{proposition}
\label{prop:gamma}
Let $\alpha$ be a reduced partition on $\{1,\ldots,n\}$ and let
$\{i\rightarrow \alpha(i), j\rightarrow  \alpha(j)\}$ be a pair of
parallel edges. Then $i$ 
and $j$ belong to different cycles of $\alpha$ and the permutation
$\gamma_{i,j}[\alpha]$, given by
$$
\gamma_{i,j}[\alpha](k)=
\begin{cases}
i+1 & \mbox{if $k=i$;}\\
j+1 & \mbox{if $k=j$;}\\  
\alpha(k) & \mbox{if $k\notin \{i,j\}$}\\
\end{cases} 
$$
is also a partition. Furthermore, $\gamma_{i,j}[\alpha]$ has the same
genus as $\alpha$.  
\end{proposition}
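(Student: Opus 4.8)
The plan is to verify the three claims in Proposition~\ref{prop:gamma} in order: (1) $i$ and $j$ lie in different cycles of $\alpha$; (2) $\gamma_{i,j}[\alpha]$ is a partition; (3) it has the same genus as $\alpha$. The key structural fact to exploit is the preceding lemma, which says the parallel pair $\{i\to\alpha(i), j\to\alpha(j)\}$ corresponds exactly to a $2$-cycle $(i,j)$ of $\alpha^{-1}\zeta_n$. Note that $\gamma_{i,j}[\alpha]$ differs from $\alpha$ only in that $i$ and $j$ become dual fixed points (since $\gamma_{i,j}[\alpha](i)=i+1$ and $\gamma_{i,j}[\alpha](j)=j+1$).

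**For the cycle-disjointness (claim 1),** I would argue by contradiction. Suppose $i$ and $j$ lie in the same cycle $C$ of $\alpha$. Since $\alpha(i)=j+1$ and $\alpha(j)=i+1$, the cycle $C$ must pass through the four points $i,j,i+1,j+1$ with $i\to j+1$ and $j\to i+1$. Reading $C$ starting from its least element and using the partition property (each cycle is increasing from its minimum), I would derive a contradiction with the requirement that the points appear in increasing order within the cycle: the ``crossing'' pattern $i\to j+1$, $j\to i+1$ forces a descent inside a single cycle that cannot be reconciled with the monotone listing unless $i$ and $j$ sit in distinct cycles. Concretely, WLOG $i<j$; following $\alpha$ from the minimum of $C$, the successor relations $i\mapsto j+1$ and $j\mapsto i+1$ cannot both be consistent with a single clockwise-increasing cycle, because one of the two arcs would have to decrease before reaching the minimum again.

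**For claim 2,** I would check that $\gamma_{i,j}[\alpha]$ is again a partition, i.e.\ each of its cycles is increasing from its least element. Since $i$ and $j$ belonged to different cycles of $\alpha$ (by claim 1), redirecting $i\to i+1$ and $j\to j+1$ amounts to cutting each of those two cycles and splicing them: the edge $i\to j+1$ (which left $i$'s cycle and entered $j$'s cycle) is replaced by $i\to i+1$, and symmetrically for $j$. The effect is to \emph{merge} the two cycles containing $i$ and $j$ into configurations that remain monotone, because the removed edges were precisely the ``long'' increasing arcs and the inserted edges $i\to i+1$, $j\to j+1$ respect the circular order. I would verify the increasing-listing property directly by tracking where the new cycle's minimum lands and confirming the successor sequence stays increasing until it wraps.

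**The genus invariance (claim 3) is where I expect the main work to lie,** and it is cleanest via the cycle count in \eqref{eq:permgenus}. The passage from $\alpha$ to $\gamma_{i,j}[\alpha]$ merges the two distinct $\alpha$-cycles through $i$ and $j$ into a single cycle (or changes the cycle structure in a controlled way), so $z(\gamma_{i,j}[\alpha])=z(\alpha)-1$. Dually, I would compute the effect on $\alpha^{-1}\zeta_n$: since $(i,j)$ is a $2$-cycle of $\alpha^{-1}\zeta_n$ (by the preceding lemma) while $i$ and $j$ become dual fixed points of $\gamma_{i,j}[\alpha]$, Lemma~\ref{lem:dfixp} shows that $i$ and $j$ become fixed points of $\gamma_{i,j}[\alpha]^{-1}\zeta_n$; splitting off the $2$-cycle $(i,j)$ into two fixed points raises the cycle count by one, so $z(\gamma_{i,j}[\alpha]^{-1}\zeta_n)=z(\alpha^{-1}\zeta_n)+1$. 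The two changes cancel in \eqref{eq:permgenus} (the number of permuted elements $n$ is unchanged), giving $g(\gamma_{i,j}[\alpha])=g(\alpha)$. The delicate point, and the one I would spell out carefully, is confirming that the merge on the $\alpha$ side truly decreases $z(\alpha)$ by exactly one — this relies essentially on claim 1, that $i$ and $j$ started in \emph{different} cycles, so that redirecting both edges fuses two cycles into one rather than splitting a single cycle into two.
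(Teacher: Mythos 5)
Your proposal is correct and follows essentially the same route as the paper: the paper also rules out $i$ and $j$ sharing a cycle via the partition property (each cycle has a unique $k$ with $\alpha(k)\leq k$, which the subsequence $i,j+1,\ldots,j,i+1$ would violate), observes that the rerouting merges two cycles into one so $z(\gamma_{i,j}[\alpha])=z(\alpha)-1$, and notes that $\gamma_{i,j}[\alpha]^{-1}\zeta_n$ arises from $\alpha^{-1}\zeta_n$ by splitting the $2$-cycle $(i,j)$ into two fixed points, so the genus is unchanged by (\ref{eq:permgenus}). The only stylistic difference is that the paper phrases the cycle-disjointness argument through the ``unique descent per cycle'' formulation, which is the precise version of the monotonicity contradiction you sketch.
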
  
\begin{proof}
The points $i,i+1,j,j+1$ follow each other in this cyclic order, as
shown in Figure~\ref{fig:parallel}.  
\begin{figure}[h]
\begin{center}
\input{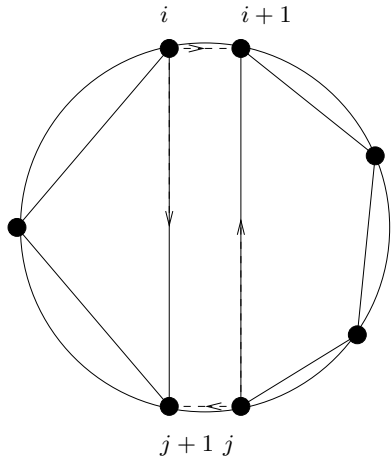}
\end{center}
\caption{A pair of parallel edges in a partition}
\label{fig:parallel}
\end{figure}

Since in each cycle of a permutation $\alpha$ representing a partition
there is a unique $k$ such that $\alpha(k) \leq k$, the sequence
$$ i,j+1, \cdots , j, i+1$$ cannot be a subsequence of a cycle of $\alpha$.
Removing the parallel edges $j\rightarrow i+1$ and $i
\rightarrow j+1$ and adding 
the directed edges $i \rightarrow i+1 $ and $j \rightarrow  j+1$ merges
the two polygons into 
a single polygon. The resulting permutation $\gamma_{i,j}[\alpha]$ is a
partition and we have $z(\gamma_{i,j}[\alpha])=z(\alpha)-1$. Note also
that $\gamma_{i,j}[\alpha]^{-1}\zeta_n$ is obtained from
$\alpha^{-1}\zeta_n$ by replacing the $2$-cycle $(i,j)$ with the
pair of fixed points $(i)(j)$. Hence
$z(\gamma_{i,j}[\alpha]^{-1}\zeta_n)=z(\alpha^{-1}\zeta_n)+1$, and the
genus is unchanged by (\ref{eq:permgenus}). 
\end{proof}
Note that $\gamma_{i,j}[\alpha]$ is not reduced, but we can make it
reduced by removing the dual fixed points $i$ and $j$.
\begin{definition}
Let  $\alpha$ be a reduced partition on $\{1,\ldots,n\}$ and let
$\{i \rightarrow \alpha(i)), j\rightarrow  \alpha(j)\}$ be a pair of
parallel edges. We will 
refer to taking $\gamma_{i,j}[\alpha]$ and then removing its dual fixed
points $i$ and $j$ using two elementary reductions of the second kind as
the {\em removal of the pair of parallel edges $\{i\rightarrow  \alpha(i),
  j \rightarrow \alpha(j)\}$}.    
\end{definition}  
A direct consequence of the definitions is the following.
\begin{lemma}
\label{lem:dp}  
The effect on $\alpha^{-1}\zeta_n$ of the removal of the pair of
parallel edges $\{i\rightarrow \alpha(i), j\rightarrow \alpha(j)\}$ is
the following. The $2$-cycle $(i,j)$ is deleted and each label $k$
in the remaining cycles is decreased by the number of elements in
$\{1,\ldots,k\}\cap \{i,j\}$.  
\end{lemma}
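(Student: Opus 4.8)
The plan is to compose the single structural fact already recorded inside the proof of Proposition~\ref{prop:gamma} with the known effect of elementary reductions of the second kind on the companion permutation $\alpha^{-1}\zeta_n$. The removal of the pair of parallel edges is by definition a two-step process — first pass to $\gamma_{i,j}[\alpha]$, then strip off the two dual fixed points $i$ and $j$ — so it is natural to track $\alpha^{-1}\zeta_n$ through each step separately and combine.

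First I would recall from the proof of Proposition~\ref{prop:gamma} that passing from $\alpha$ to $\gamma_{i,j}[\alpha]$ replaces the $2$-cycle $(i,j)$ of $\alpha^{-1}\zeta_n$ by the two fixed points $(i)(j)$, leaving every other cycle of $\alpha^{-1}\zeta_n$ untouched. By Lemma~\ref{lem:dfixp} these two fixed points are exactly the fixed points of $\gamma_{i,j}[\alpha]^{-1}\zeta_n$ that correspond to the dual fixed points $i$ and $j$ of $\gamma_{i,j}[\alpha]$, so the two elementary reductions of the second kind that complete the removal are legitimate, and neither disturbs the other point remaining a fixed point.

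Next I would invoke the effect of a single elementary reduction of the second kind on the companion permutation. By the duality between the two kinds of elementary reduction stressed in Section~\ref{sec:er} — the second kind removes a fixed point of $\alpha^{-1}\zeta_n$ in exactly the way the first kind removes a fixed point of $\alpha$ — the removal of a dual fixed point $i$ from a permutation $\beta$ deletes the fixed point $i$ from $\beta^{-1}\zeta$ and decreases every label exceeding $i$ by one, in perfect analogy with the (unlabeled) lemma following Definition~\ref{def:er1}. Applying this twice, once for $i$ and once for $j$, to $\gamma_{i,j}[\alpha]^{-1}\zeta_n$ deletes the two fixed points $(i)$ and $(j)$ and performs the two successive downward relabelings.

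Finally I would compose the two shifts. Assuming without loss of generality $i<j$, removing $i$ first decreases every label greater than $i$ by one (in particular turning the surviving fixed point $j$ into $j-1$), and the subsequent removal of that fixed point decreases every label greater than $j-1$ by one more; the net decrease applied to a surviving label $k$ is therefore $0$, $1$, or $2$ according as $k<i$, $i<k<j$, or $k>j$, which is precisely the number of elements of $\{1,\ldots,k\}\cap\{i,j\}$. Theorem~\ref{thm:ured} guarantees that removing the two dual fixed points in the opposite order yields the same reduced permutation, so the description is unambiguous. The only point demanding genuine care is this bookkeeping of the composed relabeling; every other ingredient is a direct citation of a preceding result.
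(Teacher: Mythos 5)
Your proof is correct and takes essentially the approach the paper intends: the paper asserts Lemma~\ref{lem:dp} as ``a direct consequence of the definitions,'' and your argument --- citing the effect of the merge step on $\alpha^{-1}\zeta_n$ recorded in the proof of Proposition~\ref{prop:gamma}, then tracking the two elementary reductions of the second kind on the companion permutation and composing the two label shifts --- is exactly that consequence written out. The only cosmetic blemish is the appeal to Theorem~\ref{thm:ured} for order-independence (that theorem concerns complete reduction sequences, not two prescribed removals), but this is harmless since your symmetric bookkeeping already shows the result is the same in either order.
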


A special case of a removal of a pair of parallel edges is, when at
least one of these edges, say $i\rightarrow \alpha(i)$ is part of a
$2$-cycle. Merging this $2$-cycle with another polygon and then
removing the arising dual fixed points has the same pictorial effect as
simply removing this $2$-cycle. For example $\{1\rightarrow  5,
4\rightarrow  2\}$ is a
parallel pair of edges in Figure~\ref{fig:reduced}, and both are also
edges of $2$-cycles. The partition $\gamma_{1,4}[\alpha_1]$ contains
the ``rectangle'' $(1,2,4,5)$ in which $1$ and $4$ are dual fixed
points. The removal of these yields the partition $\alpha_2$ shown in
Figure~\ref{fig:semiprimitive}.  
\begin{figure}[h]
\begin{center}
\input{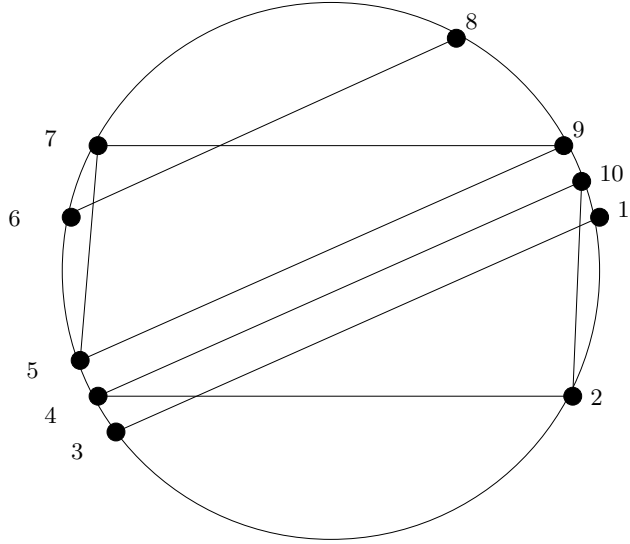}
\end{center}
\caption{The semiprimitive partition $\alpha_2$ associated to $\alpha_0$}
\label{fig:semiprimitive}
\end{figure}
\begin{definition}
We call a partition $\alpha$ {\em semiprimitive} if it has no pair of parallel
edges $\{i \rightarrow \alpha(i), j\rightarrow \alpha(j)\}$ such that
$(i,\alpha(i))$ is a 
$2$-cycle. We call a partition $\alpha$ {\em primitive} if it
contains no pairs of parallel edges at all. 
\end{definition}  
The partition $\alpha_2$ shown in Figure~\ref{fig:semiprimitive} is
semiprimitive: its only pair of parallel edges is $(\{9,5), (4,10)\}$
and both of these directed edges are parts of $3$-cycles. Removing this
last pair of parallel edges yields the primitive partition shown in
Figure~\ref{fig:primitive}.
\begin{figure}[h]
\begin{center}
\input{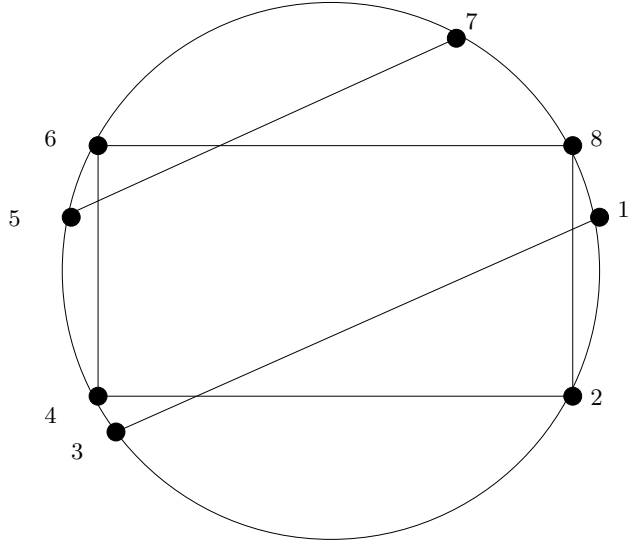}
\end{center}
\caption{The primitive partition $\alpha_3$ associated to $\alpha_0$}
\label{fig:primitive}
\end{figure}
\begin{proposition}
\label{prop:primitive}
By repeated removal of pairs of parallel edges, each reduced partition
 may be transformed into a primitive
partition.  This primitive partition does not depend on the order in which the removals
are performed. 
\end{proposition}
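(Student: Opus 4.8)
The plan is to transfer the whole process to the permutation $\beta\df\alpha^{-1}\zeta_n$, where it becomes transparent. By the lemma preceding Proposition~\ref{prop:gamma}, the pairs of parallel edges of a reduced partition $\alpha$ are exactly the $2$-cycles of $\beta$, and by Lemma~\ref{lem:dp} the removal of such a pair corresponds, on the level of $\beta$, to deleting one $2$-cycle and relabelling the surviving points by the unique order-preserving bijection onto $\{1,\ldots,n-2\}$. Before anything else I would check that each removal keeps us inside the class of reduced partitions, so that the next removal is well defined. Proposition~\ref{prop:gamma} gives that $\gamma_{i,j}[\alpha]$ is a partition, the two elementary reductions keep it a partition, and the resulting $\beta'$ inherits from $\beta$ the absence of fixed points, so $\alpha'$ has no dual fixed points. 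That $\alpha'$ also has no fixed points is the one place where the \emph{partition} hypothesis (rather than mere reducedness) is essential, and I expect it to be the main obstacle: a fixed point of $\alpha'$ would, via Lemma~\ref{lem:fixp}, force a consecutive ascent $\beta'(m-1)=m$, which on undoing the relabelling becomes an equality $\alpha(k+1)=k$ with $k\in\{i,j\}$ (the sub-cases not involving the removed points collapse to a genuine fixed point of $\alpha$ and are excluded, and the sub-case $j=i+1$ is excluded since $\{i,\alpha(i),j,\alpha(j)\}$ has four elements). For a partition the descent $\alpha(k+1)=k$ forces the cycle of $k+1$ to be the $2$-cycle $(k,k+1)$, i.e. $\alpha(k)=k+1$, contradicting $\alpha(i)=j+1\neq i+1$ and $\alpha(j)=i+1\neq j+1$.

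Termination and the number of steps follow from a monovariant. Deleting one $2$-cycle of $\beta$ and relabelling preserves the cycle length of every surviving cycle, so it destroys exactly one $2$-cycle and creates none; since $\beta$ stays fixed-point-free, the number of $2$-cycles of $\beta$ drops by exactly one at each removal. Hence after exactly $t$ removals, where $t$ is the initial number of $2$-cycles of $\beta$, we reach a permutation with no $2$-cycles, that is, a reduced partition with no parallel edges, which is primitive. (Along the way the genus is unchanged, by Proposition~\ref{prop:gamma} together with the fact that elementary reductions preserve genus.)

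For order-independence I would argue directly rather than invoke an abstract confluence theorem. Let $C_1,\ldots,C_t$ be the $2$-cycles of $\beta$; being cycles of a single permutation they are pairwise disjoint. Removing any one of them leaves the others intact—merely relabelled—as the $2$-cycles of the new $\beta'$, and introduces no new $2$-cycle; so by induction on $t$ every removal sequence eventually deletes all of $C_1,\ldots,C_t$. The composite effect on $\beta$ is therefore independent of the order in which the $C_s$ are removed: one discards $C_1\cup\cdots\cup C_t$ and relabels the surviving points by the unique order-preserving bijection $\phi\colon\{1,\ldots,n\}\setminus(C_1\cup\cdots\cup C_t)\to\{1,\ldots,n-2t\}$, obtaining the single permutation $\beta^{*}=\phi\circ\beta\circ\phi^{-1}$. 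Since a partition is recovered from $\beta$ via $\alpha=\zeta_n\beta^{-1}$, the terminal primitive partition $\alpha^{*}=\zeta_{n-2t}(\beta^{*})^{-1}$ is uniquely determined by $\alpha$, which is exactly the assertion of the proposition. With the preservation of reducedness settled, the termination and confluence parts are then just bookkeeping about disjoint $2$-cycles and composites of order-preserving relabellings.
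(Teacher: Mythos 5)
Your proof is correct and follows essentially the same route as the paper's: repeated use of Lemma~\ref{lem:dp} to identify each removal of a parallel pair with the deletion of a $2$-cycle of $\alpha^{-1}\zeta_n$ followed by an order-preserving relabelling, so that any maximal sequence of removals ends with $\alpha^{-1}\zeta_n$ stripped of all of its (pairwise disjoint) $2$-cycles --- a description manifestly independent of the order of removal. Your additional verification that each intermediate partition remains reduced (so that further removals are well defined) is a point the paper's proof leaves implicit.
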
  
\begin{proof}
This statement is easily shown by repeated use of Lemma~\ref{lem:dp}.
If $\alpha$ is a partition of $\{1,\ldots,n\}$ and $\alpha^{-1}\zeta_n$ contains $m$ $2$-cycles then $\pi[\alpha]$
is a partition on the set $\{1,\ldots,n-2m\}$. The cycle decomposition
of $\pi[\alpha]^{-1}\zeta_{n-2m}$ is obtained from the cycle decomposition of
$\alpha^{-1}\zeta_n$ by removing all $2$-cycles, and decreasing each
remaining label $k$ by the number of elements belonging to one of the
removed $2$-cycles and also to the set $\{1,\ldots,k\}$.
\end{proof}

In analogy to Proposition~\ref{prop:primitive}, we also have the
following statement. 

\begin{proposition}
\label{prop:semiprimitive}
By repeated removal of pairs of parallel edges such that at least one
edge in the pair is a $2$-cycle of $\alpha$, each reduced partition
$\alpha$ of $\{1,\ldots,n\}$ may be transformed into a unique semiprimitive
partition $\sigma[\alpha]$.   
\end{proposition}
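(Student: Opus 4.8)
The plan is to prove Proposition~\ref{prop:semiprimitive} by mirroring the structure of the proof of Proposition~\ref{prop:primitive}, but restricting the removal operation to parallel pairs in which at least one edge is a $2$-cycle of $\alpha$. The statement has two components: first, that repeated removal of such ``special'' parallel pairs terminates in a semiprimitive partition; and second, that the resulting semiprimitive partition is independent of the order of removals. The termination is immediate since each removal of a pair of parallel edges strictly decreases the number of permuted elements (by two, as recorded in Lemma~\ref{lem:dp}), so no infinite sequence of removals is possible, and the process halts precisely when no parallel pair with a $2$-cycle edge remains --- that is, when the partition is semiprimitive by definition.

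The substance of the proof is the uniqueness (confluence) of the final semiprimitive partition. First I would reformulate each removal at the level of $\alpha^{-1}\zeta_n$ using Lemma~\ref{lem:dp}: removing a special parallel pair $\{i\rightarrow\alpha(i), j\rightarrow\alpha(j)\}$ deletes the $2$-cycle $(i,j)$ from $\alpha^{-1}\zeta_n$ and renumbers the survivors. The key observation is that the condition ``at least one of the two parallel edges is a $2$-cycle of $\alpha$'' can be translated into an intrinsic condition on the $2$-cycle $(i,j)$ of $\alpha^{-1}\zeta_n$, so that the set of \emph{removable} $2$-cycles of $\alpha^{-1}\zeta_n$ is well-defined independently of any choices. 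I would then argue that removing one such $2$-cycle does not destroy the removability of any other surviving removable $2$-cycle, nor create a spurious new one: the edges of $\alpha$ outside $\{i,\alpha(i),j,\alpha(j)\}$ are unchanged and merely relabelled, so whether a different edge forms a $2$-cycle of $\alpha$ is preserved under the operation. This yields a local confluence (diamond) property: if two distinct special parallel pairs are available, performing one removal leaves the other still performable, and the two orders of removal lead to the same result.

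The main obstacle I anticipate is verifying that removability is genuinely preserved, i.e.\ that two distinct removable $2$-cycles of $\alpha^{-1}\zeta_n$ really are \emph{simultaneously} and \emph{independently} removable, so that the diamond property holds without degenerate interference. The delicate case is when the two special parallel pairs share a point, or when one parallel pair's $2$-cycle edge is adjacent (in the cyclic order $\zeta_n$) to the other pair, since then the relabelling from the first removal could a priori merge structure or alter which edges of $\alpha$ are $2$-cycles. I would handle this by examining the four-point configuration $\{i,i+1,j,j+1\}$ from Figure~\ref{fig:parallel} together with Proposition~\ref{prop:gamma}, which guarantees that $i$ and $j$ lie in different cycles of $\alpha$; this disjointness restricts how two such configurations can overlap and rules out the problematic interactions. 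Once local confluence is established, global uniqueness of $\sigma[\alpha]$ follows from the terminating, locally confluent rewriting system by a standard Newman's lemma argument, exactly as in the reasoning behind Theorem~\ref{thm:ured} and Proposition~\ref{prop:primitive}.
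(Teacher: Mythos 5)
Your termination argument is fine, and recasting the removals as deletions of $2$-cycles of $\alpha^{-1}\zeta_n$ via Lemma~\ref{lem:dp} is the right setting; but the heart of your argument --- the claim that removing one removable pair ``does not destroy the removability of any other surviving removable $2$-cycle,'' backed by the assertion that Proposition~\ref{prop:gamma} rules out the problematic overlaps --- is false. Two removable pairs can share a single $2$-cycle of $\alpha$, one pair using each of its two directed edges, and Proposition~\ref{prop:gamma} (which only says that the two points of \emph{one} pair lie in different cycles) does nothing to forbid this. Concretely, take the reduced genus~$2$ partition $\alpha=(1,9,11)(2,8)(3,5,7)(4,6)(10,12)$ of $\{1,\ldots,12\}$; then $\alpha^{-1}\zeta_{12}=(1,8)(2,7)(3,6,5,4)(9,12,11,10)$, and both $2$-cycles $(1,8)$ and $(2,7)$ are removable: the pair for $(1,8)$ is $\{1\rightarrow 9,\ 8\rightarrow 2\}$ and the pair for $(2,7)$ is $\{2\rightarrow 8,\ 7\rightarrow 3\}$, each special because it contains one of the two directed edges of the $2$-cycle $(2,8)$ of $\alpha$. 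Removing the $(1,8)$-pair absorbs $(2,8)$ into the $3$-cycle $(1,9,11)$, producing $(2,9,11)$; the surviving pair $\{2\rightarrow 9,\ 7\rightarrow 3\}$ then has both of its edges on $3$-cycles, so it is no longer removable. Thus your diamond property fails outright: after one removal, the other is not performable at all.

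What saves the proposition --- and what your proof is missing --- is that in this shared-$2$-cycle configuration the two competing removals produce the \emph{same} partition, so local confluence holds because the two one-step results coincide, not because the removals commute. In the example, removing either pair yields, after relabeling, $(1,7,9)(2,4,6)(3,5)(8,10)$: one removal deletes $\{1,8\}$ and splices $(2,8)$ into $(1,9,11)$, the other deletes $\{2,7\}$ and splices $(2,8)$ into $(3,5,7)$, and the relabeling carries one outcome onto the other. A correct version of your argument therefore needs a dichotomy: two removable pairs either flank a common $2$-cycle of $\alpha$ (one using each of its directed edges), in which case the two results are equal, or they do not, in which case each pair's witnessing $2$-cycle survives the other's removal and the removals commute; Newman's lemma then finishes the proof. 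Be aware that the paper does not argue this way: its proof rests on essentially the same invariance observation you invoke, and derives from it an explicit description of $\sigma[\alpha]$ (delete every $2$-cycle $(i,j)$ of $\alpha^{-1}\zeta_n$ with $i$ or $j$ a fixed point of $\alpha^2$), a description the example above contradicts --- there are two such $2$-cycles, yet $\sigma[\alpha]$ has $10$ points rather than $8$. So the flanked case is a genuine gap in your proposal (and a defect in the published proof as well), and handling it explicitly is unavoidable.
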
  
\begin{proof}
The proof depends on the following observation. The answer to the
question whether  $i$ and $\alpha(i)$ form a $2$-cycle
(equivalently: $i$ is a fixed point of $\alpha^2$) remains
essentially unchanged after the removal a pair of parallel edges, where
neither of the edges is $i\rightarrow \alpha(i)$. Only the labels $i$
and $\alpha(i)$ may  decrease accordingly. If initially
$\alpha^{-1}\zeta_n$ has $m'$ 
$2$-cycles $(i,j)$ such that $i$ or $j$   
is a fixed point of $\alpha^2$, then $\sigma[\alpha]$  is a partition of
$\{1,\ldots,n-2m'\}$. Furthermore $\sigma[\alpha]^{-1}\zeta_{n-2m'}$ is
obtained from $\alpha^{-1}\zeta_n$ by deleting all $2$-cycles
$(i,j)$ of $\alpha^{-1}\zeta_n$ such that $i$ or $j$ is a fixed point of
$\alpha^2$ and decreasing each label $k$ by the number of elements
removed from the set $\{1,\ldots,k\}$.  
\end{proof}  

The main result of this paper is a consequence of the following observation.

\begin{theorem}
\label{thm:finite}
A primitive partition of genus $g$ is a partition of a set with at most\\
$6(2g-1)$ elements. Moreover for any $g$ there is a finite  number of
semiprimitive partitions of genus $g$, hence also a finite number  of
primitive ones. 
\end{theorem}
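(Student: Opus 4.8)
The plan is to bound the number $n$ of elements of a primitive, respectively semiprimitive, partition of genus $g$ purely by counting cycles, using the genus identity \eqref{eq:permgenus}, which for a partition $\alpha$ of $\{1,\ldots,n\}$ reads $z(\alpha)+z(\alpha^{-1}\zeta_n)=n+1-2g$. Write $d:=\alpha^{-1}\zeta_n$. Since both primitive and semiprimitive partitions are reduced, neither $\alpha$ nor $d$ has a fixed point, so every cycle of either permutation has length at least $2$. A reduced partition of genus $0$ is forced to be empty (otherwise $z(\alpha)+z(d)\le n/2+n/2=n<n+1$), so I may assume $g\ge 1$.

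For a primitive partition the argument is immediate. Having no parallel edges means, by the lemma identifying parallel pairs with the $2$-cycles of $\alpha^{-1}\zeta_n$, that $d$ has no $2$-cycle; being fixed-point free as well, each cycle of $d$ has length at least $3$, whence $z(d)\le n/3$. Together with $z(\alpha)\le n/2$ the genus identity gives $n+1-2g=z(\alpha)+z(d)\le \tfrac{n}{2}+\tfrac{n}{3}=\tfrac{5n}{6}$, so $\tfrac{n}{6}\le 2g-1$, i.e. $n\le 6(2g-1)$.

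For a semiprimitive partition $d$ may now possess $2$-cycles, so I must bound how many. Let $a$ be the number of $2$-cycles of $\alpha$, so that $n-2a$ elements lie in cycles of $\alpha$ of length at least $3$, and let $b$ be the number of $2$-cycles of $d$. The semiprimitive hypothesis says that no parallel pair contains an edge that is a $2$-cycle of $\alpha$; reading this through the same characterization, whenever $(i,j)$ is a $2$-cycle of $d$ both $i$ and $j$ lie in cycles of $\alpha$ of length at least $3$ (they cannot be fixed points, as $\alpha$ is reduced). As distinct $2$-cycles of $d$ are disjoint, their $2b$ elements are distinct and all lie in the long cycles of $\alpha$, giving $2b\le n-2a$, i.e. $a+b\le n/2$. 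Bounding the cycles of $\alpha$ and of $d$ below by their shortest admissible lengths yields $z(\alpha)\le a+\tfrac{n-2a}{3}$ and $z(d)\le b+\tfrac{n-2b}{3}$; summing and using $a+b\le n/2$ gives $z(\alpha)+z(d)\le \tfrac{a+b+2n}{3}\le \tfrac{5n}{6}$. The genus identity then again forces $n\le 6(2g-1)$, so only finitely many semiprimitive---and a fortiori only finitely many primitive---partitions exist for each fixed $g$.

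The step needing the most care is the semiprimitive count, and in particular the inequality $2b\le n-2a$. It rests on three facts that I would check explicitly: that the $2$-cycles of $d$ are exactly the parallel pairs of $\alpha$ (the lemma preceding Proposition~\ref{prop:gamma}); that the semiprimitive condition pushes both endpoints of each such pair into a cycle of $\alpha$ of length at least $3$; and that distinct $2$-cycles of $d$ occupy disjoint sets of elements, so that they consume a disjoint portion of the ``long-cycle budget'' $n-2a$ of $\alpha$. Once this is in place, the remaining estimates are routine arithmetic.
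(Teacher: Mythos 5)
Your proposal is correct. The first half (the bound $n\le 6(2g-1)$ for primitive partitions) is exactly the paper's argument: $z(\alpha)\le n/2$ since $\alpha$ is reduced, $z(\alpha^{-1}\zeta_n)\le n/3$ since primitivity plus reducedness forces all cycles of $\alpha^{-1}\zeta_n$ to have length at least $3$, and then the genus identity. For the second half, however, you take a genuinely different route. The paper never bounds the size of a semiprimitive partition; instead it appeals to Proposition~\ref{prop:primitive} to reduce a semiprimitive $\alpha$ to its primitive core $\pi(\alpha)$, observes that semiprimitivity forces every parallel-pair removal to merge two cycles of length at least $3$, and then argues that, replayed backward, each semiprimitive partition is obtained from one of the finitely many primitive ones by a finite sequence of ``cuts'' of cycles of length at least $4$ along diagonals --- a structural but purely qualitative finiteness argument. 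You instead prove the quantitative statement that a semiprimitive partition of genus $g$ itself has at most $6(2g-1)$ elements, by splitting the cycle counts as $z(\alpha)\le a+\frac{n-2a}{3}$ and $z(\alpha^{-1}\zeta_n)\le b+\frac{n-2b}{3}$ and using the key inequality $2b\le n-2a$, which follows correctly from the characterization of parallel pairs as $2$-cycles of $\alpha^{-1}\zeta_n$, the semiprimitivity hypothesis, and disjointness of cycles. Your argument is cleaner and strictly stronger as a finiteness statement (an explicit size bound, matching the primitive bound, which the paper does not state); what the paper's approach buys instead is the structural description of semiprimitive partitions as primitive ones with long cycles cut along diagonals, which is precisely what is exploited in Section~\ref{sec:gen2} to enumerate the genus~$2$ semiprimitive partitions by hand. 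One could also remark that the paper's claim that the backward cutting process terminates in finitely many ways is left implicit there, whereas your bound makes any such termination argument unnecessary.
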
  
\begin{proof}
Let $\alpha$ be a primitive
partition of genus $g$ of the set $\{1,\ldots,n\}$. Since $\alpha$ is
reduced, it has no fixed points, every cycle of $\alpha$ has length at
least $2$ and $z(\alpha)\leq n/2$ . By the same reason,
$\alpha^{-1}\zeta_n$ has no fixed point either, and by the primitivity
of $\alpha$, there are no $2$-cycles in $\alpha^{-1}\zeta_n$
either, each of its cycles has length at least $3$. Thus
$z(\alpha^{-1}\zeta_n)\leq n/3$.  Equation
(\ref{eq:permgenus}), together with the  above observations, yields
$$
n+1-2g=z(\alpha)+z(\alpha^{-1}\zeta_n)\leq \frac{n}{2} + \frac{n}{3},
$$
and the stated inequality follows after rearranging.

Consider now a semiprimitive partition $\alpha$ of genus $g$. By
Proposition~\ref{prop:primitive}, after the removal of parallel pairs of
edges we arrive at the unique primitive partition $\pi(\alpha)$. By the
already shown part of the statement, $\pi(\alpha)$ is a partition of at
most $6(2g-1)$ elements. Since $\alpha$ is semiprimitive, each removal
of a parallel pair of edges results in merging two of its cycles, of length
$c_1\geq 3$ and $c_2\geq 3$ into a cycle of length $c_1+c_2$,  where the
removed parallel pair of edges corresponds to a diagonal of the cycle
that was created. For example, the primitive partition shown in
Figure~\ref{fig:primitive} arises by merging the two triangles of the
semiprimitive partition shown in
Figure~\ref{fig:semiprimitive}. Replaying the sequence of moves
resulting in $\pi(\alpha)$ backward, what we see in each step is that a
cycle of length at least $4$ of the current partition is cut into two smaller
polygons by cutting along a diagonal. For a fixed primitive partition,
this sequence of actions can be performed only in finitely many ways. 
\end{proof}  

\section{Counting reduced partitions}
\label{sec:primred}

What we have obtained so far is that each reduced partition  of
genus $g$ is obtained from one of finitely many semiprimitive partitions
by repeatedly adding $2$-cycles $(u,v)$ in such a way that
 either the directed edge $u\rightarrow v$ or the directed edge
$v\rightarrow  u$ forms a parallel pair with an existing edge of the
current partition. Such an operation does not change the number of parts
of size greater than $2$, the addition of $2$-cycles appears as adding
parallel line segments to the diagram of the partition. Each newly added
$2$-cycle is parallel to at least one already existing edge in the
diagram. In order to avoid ambiguities, let us have a closer look
whether it is possible that the lastly added $2$-cycle $(u,v)$ creates
more than one parallel pair of directed edges in the current partition
$\beta$ obtained from a semiprimitive partition $\alpha$ by repeatedly adding $2$-cycles .
This is only possible if we have 
$$\beta(u)=v\quad\mbox{and}\quad \beta(v-1)=u+1$$
as well as
$$\beta(v)=u\quad\mbox{and}\quad \beta(u-1)=v+1$$
implying that both $\{u\rightarrow  v, v-1 \rightarrow u+1\}$ and
$\{v\rightarrow u, u-1 \rightarrow v+1\}$ are
parallel pairs of directed edges in $\beta$. Note that in this case the
removal of either parallel pairs results in the parallel pair of
directed edges $\{u-1\rightarrow v, v-1\rightarrow  u\}$. In other
words, we have inserted a $2$-cycle in between a parallel pair of edges.
This inspires the following definition:
\begin{definition}
We define a {\em parallel class} of directed edges in a reduced
partition $\alpha$ as the reflexive and transitive closure of the
following relation: 
\begin{enumerate}
\item If $u\rightarrow \alpha(u)$ and $v\rightarrow \alpha(v)$ form a
  parallel pair of edges then they are in the same parallel class.  
\item For a $2$-cycle $(u,v)$ the directed edges $u\rightarrow v$
  and $v\rightarrow u$ are in the same parallel class. 
\end{enumerate}
\end{definition}
By a slight abuse of the terminology we will refer to $2$-cycles
being in the same parallel class, instead of saying that both directed edges of
the $2$-cycle are in the same parallel class.
An example of a reduced partition containing many directed edges in the
same parallel class is shown in Figure~\ref{fig:parallels}.
For this example, the parallel classes containing (directed edges of)
$2$-cycles are  
$$
\{(17,7),(18,6),(1,5),(2,4)\}, \quad \{11\rightarrow 16, (12,15)\},
\quad \mbox{and}\quad \{(9,14),(10,13)\}. 
$$
\begin{figure}[h]
\begin{center}
\input{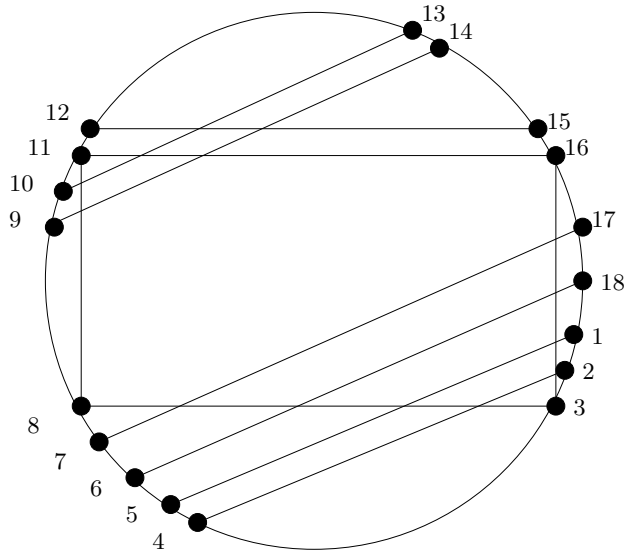}
\end{center}
\caption{Parallel $2$-cycles in a reduced permutation}
\label{fig:parallels}
\end{figure}
We may represent $2$-cycles in the same parallel class with parallel
line segments in the diagram of the partition. The converse does not
need to be true: the $2$-cycles $(9,14)$ and $(7,17)$ are
represented with parallel line segments, but they are not parallel. 
The parallel classes are still easily recognizable, because of the
following observation.
\begin{lemma}
\label{lem:consecutive}  
  In a reduced partition, the set of endpoints of all directed edges
  belonging to the same parallel class form two cyclically consecutive
sets of labels. The two sets can 
not be merged into a single cyclically consecutive set.
\end{lemma}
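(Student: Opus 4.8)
The plan is to represent a parallel class by the graph $\Gamma$ whose vertices are the directed edges lying in the class and whose adjacencies are the two relations of the definition, and then to prove that $\Gamma$ is a simple path whose chords form a strictly nested ``ladder''; the endpoints of the rungs will sweep out the two claimed arcs. First I would record the degree structure of $\Gamma$. A directed edge $u\rightarrow\alpha(u)$ has at most one parallel partner, because any partner $v\rightarrow\alpha(v)$ must satisfy $v=\alpha(u)-1$ and $\alpha(v)=u+1$, which determines it; and it has at most one relation-(2) neighbour, the reverse edge of the (unique) $2$-cycle it may belong to. Thus $\Gamma$ has maximum degree $2$, so each connected component, in particular our parallel class, is a path or a cycle.

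Next I would exploit the local geometry. For a parallel pair $\{i\rightarrow j+1,\ j\rightarrow i+1\}$ the endpoints occur in the cyclic order $i,i+1,j,j+1$ (as established in the proof of Proposition~\ref{prop:gamma}), so its two chords $\{i,j+1\}$ and $\{i+1,j\}$ are nested and differ by one step on each side; hence a relation-(1) link always joins a chord to an adjacent rung. The decisive point is that the two directed edges of a $2$-cycle $\{p,q\}$ point to \emph{opposite} neighbours: writing the edges as $p\rightarrow q$ and $q\rightarrow p$, one has its parallel partner on the rung one step inside, the other on the rung one step outside. Therefore, each time the path crosses a $2$-cycle (via its relation-(2) edge) it continues in the same nesting direction, so the rung index is strictly monotone along the path. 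This rules out a cyclic component and shows the chords are strictly nested, $c_1\subsetneq\cdots\subsetneq c_t$ with $c_{k+1}$ obtained from $c_k=\{p_k,q_k\}$ by $p_{k+1}=p_k-1$, $q_{k+1}=q_k+1$. The set of all endpoints is then exactly $\{p_t,\dots,p_1\}\cup\{q_1,\dots,q_t\}$, two cyclically consecutive sets, which is the first assertion.

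It remains to see that these two arcs cannot be merged, i.e. that the inner gap strictly between $p_1$ and $q_1$ and the outer gap strictly between $q_t$ and $p_t$ are both nonempty. I would reduce this to the clean fact that in a \emph{reduced partition} no directed edge joins two cyclically adjacent points: an edge $i\rightarrow i+1$ is a dual fixed point, excluded by reducedness, whereas an edge $i+1\rightarrow i$ would make $i+1$ the largest and $i$ the smallest element of its cycle, forcing that cycle to equal $\{i,i+1\}$ and hence reintroducing the dual fixed point $\alpha(i)=i+1$. Since neither the innermost chord $c_1$ nor the outermost chord $c_t$ can connect adjacent points, both gaps contain a point, and the two arcs are separated on both sides.

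The step I expect to be the main obstacle is establishing the monotone ladder structure in the second paragraph: ruling out a cyclic component and verifying that ``extend outward by one step'' is globally consistent rather than occasionally reversing. The observation that unlocks it is precisely that the two directed edges of a $2$-cycle link to the inner and the outer neighbouring rung respectively, which forces the traversal through each $2$-cycle to preserve the direction of motion.
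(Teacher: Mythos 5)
Your proof is correct and follows essentially the same route as the paper: the paper's own (very terse) argument likewise links the edges of a parallel class by a chain of parallel-pair/same-endpoint steps, grows the two consecutive runs one unit at a time (its ``induction on the number of edges'' is exactly your monotone ladder), and gets the second assertion from the absence of edges between cyclically adjacent points. If anything your write-up is more careful than the paper's, which cites only the absence of edges $u\rightarrow u+1$ and leaves implicit the exclusion of the opposite orientation $u+1\rightarrow u$, the one place where the partition property (not just reducedness) is genuinely needed.
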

Indeed, between every two directed edges $u\rightarrow v$ and
$u'\rightarrow v'$ there is a sequence of edges such that any two
consecutive edges form either a parallel pair, or have the same pair of
endpoints. Using this observation it is easy to show the first statement
by induction on the number of edges. The second statement is an easy
consequence of the fact that there is no directed edge of the form
$u\rightarrow u+1$ in a reduced partition. 

The second key observation is the following.
\begin{lemma}
\label{lem:1stays}  
Let $\alpha$ be a partition, let $(u,v)$ be a $2$-cycle of $\alpha$ and
let $\{u\rightarrow v, u'\rightarrow v'\}$ be a parallel pair of
edges. The removal of this parallel pair decreases the number of
$2$-cycles in the parallel class of $u\rightarrow v$ by
one. Furthermore, the label $1$ will belong to a directed edge in the same
parallel class as before, and it will belong to the same consecutive set
of endpoints associated to that class.   
\end{lemma}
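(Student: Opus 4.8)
The plan is to read off both assertions from an explicit description of the partition produced by the removal, and then to track the single point $1$ through the relabeling. First I would fix coordinates. Since $u\to v$ and $u'\to v'$ are parallel and $\alpha(u)=v$, the defining relations $\alpha(u)=u'+1$ and $\alpha(u')=u+1$ force $u'=v-1$ and $v'=u+1$; moreover, by the argument in the proof of Proposition~\ref{prop:gamma}, the four (distinct) points $u,u+1,v-1,v$ occur in this cyclic order. By definition the removal passes to $\gamma_{u,v-1}[\alpha]$ and then deletes the two dual fixed points $u$ (with $\gamma_{u,v-1}[\alpha](u)=u+1$) and $v-1$ (with $\gamma_{u,v-1}[\alpha](v-1)=v$) by two elementary reductions of the second kind. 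Writing $w:=\alpha^{-1}(v-1)$ and splicing out $u$ and $v-1$ from the local chain $w\to v-1\to v\to u\to u+1$ of $\gamma_{u,v-1}[\alpha]$, one finds that the resulting partition $\beta$ agrees with $\alpha$ except that $u$ and $v-1$ are deleted and $\beta(v)=u+1$, $\beta(w)=v$ (before relabeling).

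For the first assertion I would show that the only $2$-cycle of $\alpha$ that disappears is $(u,v)$. It is destroyed, since the edge $\alpha(u)=v$ is erased and $\alpha(v)=u$ is replaced by $\beta(v)=u+1$. No other $2$-cycle can be affected: the only further change is at $w$, and $\{w,v-1\}$ is a $2$-cycle of $\alpha$ exactly when $w=u+1$, in which case it reappears as the $2$-cycle $\{u+1,v\}$ of $\beta$ rather than being lost; likewise $\beta(v)=u+1$ creates a new $2$-cycle only in this same case. Thus the number of $2$-cycles of $\alpha$ drops by exactly one. Since $(u,v)$ lies in the parallel class $C$ of $u\to v$, while all surviving $2$-cycles of $C$ are merely relabeled and, by Lemma~\ref{lem:consecutive}, still form two cyclically consecutive blocks of endpoints (the parallel relation now bridging over the deleted rung), $C$ persists as a class $C^{\ast}$ with one fewer $2$-cycle.

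For the second assertion I would use that deleting the dual fixed points $u$ and $v-1$ induces a relabeling of the surviving points that preserves their cyclic order, and hence carries the two consecutive endpoint blocks of each parallel class to two consecutive endpoint blocks of its image. Let $A\ni u,u+1$ and $B\ni v-1,v$ denote the two endpoint blocks of $C$; these cyclically adjacent pairs lie in a common block because all four points are endpoints of $C$, so exactly one deleted point lies in each block. If $1\notin\{u,v-1\}$ then $1$ keeps its label, and its incident edge together with its block membership is simply transported to the image class. If $1=u$ the new label $1$ is carried by the point $u+1\in A$, and if $1=v-1$ it is carried by $v\in B$; in either case the point now labeled $1$ is an endpoint of $C^{\ast}$ lying in the image of the same block as the old point $1$.

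The step I expect to be the main obstacle is the bookkeeping of the relabeling at the cyclic seam. The relabeling rule for an elementary reduction of the second kind takes a different form depending on whether the deleted dual fixed point equals $n$ or is smaller (Lemmas~\ref{lem:er2a} and~\ref{lem:er2b}), and the distinguished point $1$ sits exactly where the circular order is cut. Verifying that the new label $1$ lands on the correct block precisely in the boundary cases $1\in\{u,v-1\}$, and when one of the deleted points is $n$ so that the relabeling wraps around, is where the argument must be carried out with care; one must also confirm that the two edges altered by the splicing (at $v$ and at $w$) do not move the edge incident to $1$ out of its class, which reduces to checking that these alterations take place within the block structure of $C$. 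Everything else is routine order-preserving relabeling.
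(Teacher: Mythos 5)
The paper gives no argument here at all --- after the statement it simply says ``the proof is left to the reader'' --- so there is no proof of record to compare yours against; it has to be judged on its own merits, and on those it is essentially right. The normalization $u'=v-1$, $v'=u+1$, the explicit description of the spliced partition ($\beta(v)=u+1$, $\beta(w)=v$ with $w=\alpha^{-1}(v-1)$, all else unchanged), the $2$-cycle count --- in particular the delicate case $w=u+1$, where the two $2$-cycles $(u,v)$ and $(u+1,v-1)$ of $C$ are traded for the single $2$-cycle $(u+1,v)$, so the net loss is still exactly one --- and the tracking of the label $1$ through the order-preserving relabeling, including the boundary cases $1=u$ (new label $1$ carried by $u+1\in A$) and $1=v-1$ (new label $1$ carried by $v\in B$), all check out. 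One small remark: the distinctness of $u,u+1,v-1,v$ uses not only Proposition~\ref{prop:gamma} but also that $\alpha$ is reduced (no fixed or dual fixed points), which is the standing hypothesis of the section in which the lemma is used.

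The one sentence that carries the real weight, and that you assert rather than prove, is that ``$C$ persists as a class $C^{\ast}$ with one fewer $2$-cycle.'' Both halves of the lemma need the fact that the parallel classes of $\beta$ are precisely the images of the parallel classes of $\alpha$, with $C$ shrunk by the removed pair: nothing merges into $C^{\ast}$, nothing splits off, and the modified edges land where they should. This is not hard, but it deserves an explicit argument: a directed edge $i\to\alpha(i)$ has at most one parallel partner (its tail is forced to be the predecessor of $\alpha(i)$ and its head the successor of $i$) and at most one $2$-cycle partner (its reversal), so the relation graph has maximum degree two and every parallel class is a path --- a ladder in which the removed pair occupies two adjacent positions. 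Deleting $u$ and $v-1$ changes the successor map only at $u-1$ and $v-2$, and changes the permutation only at $v$ and $w$; running through the finitely many relations involving these points shows that every relation of $\alpha$ not involving the removed pair survives verbatim, and that the only genuinely new relation is the bridging $2$-cycle $(v,u+1)$, which arises exactly when $w=u+1$. In particular, when $w\neq u+1$ the modified edge $w\to v$ remains in its old class, which is distinct from $C$, so no foreign $2$-cycles are imported into $C^{\ast}$. Once this ladder observation is recorded, the seam and wraparound bookkeeping that you flag as the main obstacle really is routine, exactly as you predict, so I would count your proposal as correct modulo writing out that one structural fact.
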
  
The proof is left to the reader. 

As a consequence of Lemmas~\ref{lem:consecutive} and \ref{lem:1stays}, 
we may uniquely reconstruct a reduced partition $\alpha$ from the unique
semiprimitive partition $\sigma[\alpha]$ that can be obtained by the
removal of parallel pairs of directed edges from it, 
if we now the number of $2$-cycles added to each parallel
class of $\sigma[\alpha]$, and the location of the point labeled $1$ in
$\alpha$ (the latter is restricted by Lemma~\ref{lem:1stays}). 

The unique semiprimitive partition associated to the partition shown in
Figure~\ref{fig:parallels} is also primitive, it is the partition
$\alpha_3$ shown in Figure~\ref{fig:primitive}. The primitive partition
$\alpha_3$ allows the addition of $2$-cycles to $6$ parallel classes, each
corresponding to one edge of the diagram of $\alpha_3$. The direction of
this edge matters if it is part of a cycle longer than two: edges in the
parallel class of $\{(14,17)\}$ will appear all above the unique
$4$-cycle. On the other hand $2$-cycles that are in the
parallel class corresponding to a $2$-cycle of $\alpha_3$ may
have been added on either side of the original $2$-cycle.

For semiprimitive but not primitive partitions a minor complication
arises due to the fact that such a partition contains parallel pairs of
edges. For example, for the partition shown in
Figure~\ref{fig:semiprimitive} the first $2$-cycle inserted as a
parallel to $9\rightarrow 5$ is also parallel to $4\rightarrow
10$. Additional $2$-cycles in the same parallel class can not be
automatically associated to a single edge of $\alpha_2$. In such
situations we will make an arbitrary choice and mark one of the two
edges, say $9\rightarrow 5$ as the directed edge representing the
parallel class of $2$-cycles that may be merged into this edge by
repeated removals of parallel edges.

\begin{definition}
\label{def:pc}  
In a semiprimitive partition we select each edge of its diagram that is
not part of a parallel pair as a {\em parallel class representative} and
from each parallel pair of edges we select exactly one as a parallel
class representative. Subject to this selection we say that a point has
{\em type} $0$, $1$, or $2$, respectively if the number of edges
incident to it in the diagram that are parallel class representatives
is $0$, $1$, or $2$, respectively.
\end{definition}  
Subject to the selection of $9\rightarrow 5$ as a parallel class
representative, the type $1$ points of $\alpha_2$ are
$1,3,4,6,8,10$. These are the endpoints of the edges representing
$2$-cycles and the points $4$ and $10$ which are endpoints of the
edge $4\rightarrow 10$ that is not a parallel class representative.
The type $2$ points in the same diagram are $2,5,7,9$. There are no
type $0$ points. We will see in Section~\ref{sec:gen2} that no type $0$
points arise in genus $2$. This possibility may occur for higher
genuses if in a semiprimitive partition a point is contained in two
edges, both of which form a parallel pair of edges, and neither of them
is selected as a parallel class representative.

Subject to a last trick, we are now in the position to write a
generating function formula for all reduced partitions $\alpha$ for
which $\sigma[\alpha]$ is the same semiprimitive partition. 

\begin{definition}
Given a semiprimitive partition $\beta$, let us denote by
$r_{\beta}(n,k)$ the number of all reduced partitions $\alpha$ of
$\{1,\ldots,n\}$ into $k$ parts, satisfying $\sigma[\alpha]=\beta$. We
denote by $R_{\beta}(x,y)$ the generating function 
$R_{\beta}(x,y)= \sum_{n\geq 1, k\geq 1}r_{\beta}(n,k) x^ny^k $. 
\end{definition}  
Clearly $R(x,y)$, the generating function of all reduced partitions of
genus $g$, is the sum of $R_{\beta}(x,y)$ over all (finitely many)
semiprimitive partitions $\beta$ of genus $g$. The last trick we will
use is to compute {\em average contribution} of a semiprimitive
partition in a {\em cyclic recoloring class}.

Keeping in mind Proposition~\ref{prop:cyc}, together with each
semiprimitive partition $\beta$ on $\{1,\ldots,m\}$ we consider all
partitions of the form $\zeta_m^j\beta\zeta_m^{-j}$. These are all
partitions of the same genus, and they are also semiprimitive:

\begin{lemma}
Let $\beta$ be a partition of the set $\{1,\ldots,m\}$. If $\beta$ is
primitive or semiprimitive then the same holds for all
$\zeta_m^j\beta\zeta_m^{-j}$.  
\end{lemma}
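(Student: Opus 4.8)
The plan is to show that the property of being primitive (or semiprimitive) is invariant under the cyclic conjugation $\beta\mapsto\zeta_m^j\beta\zeta_m^{-j}$, and the cleanest route is to verify that this conjugation maps parallel pairs of edges to parallel pairs, preserving the distinction of whether an edge is a $2$-cycle. Write $\beta'=\zeta_m^j\beta\zeta_m^{-j}$. Since conjugation by $\zeta_m^j$ simply renumbers every point $i$ to $i+j$ (modulo $m$), we have $\beta'(i+j)=\beta(i)+j$ for all $i$; equivalently $\beta'(k)=\beta(k-j)+j$. First I would observe that this renumbering is a bijection commuting with the operation ``$+1$'', so a $2$-cycle $(a,b)$ of $\beta$ becomes the $2$-cycle $(a+j,b+j)$ of $\beta'$, and $\beta'$ is again a partition of genus $g$ by Proposition~\ref{prop:cyc}.

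The heart of the argument is that parallel pairs are preserved. Recall from the lemma in Section~\ref{sec:primitive} that $\{i\rightarrow\beta(i),\ \ell\rightarrow\beta(\ell)\}$ is a parallel pair exactly when $(i,\ell)$ is a $2$-cycle of $\beta^{-1}\zeta_m$. So I would compute how $\beta^{-1}\zeta_m$ transforms. Using $\zeta_m\zeta_m^{-j}=\zeta_m^{-j}\zeta_m$ (powers of $\zeta_m$ commute), one gets
$$
\beta'^{-1}\zeta_m=\zeta_m^{j}\beta^{-1}\zeta_m^{-j}\zeta_m=\zeta_m^{j}\bigl(\beta^{-1}\zeta_m\bigr)\zeta_m^{-j},
$$
so $\beta'^{-1}\zeta_m$ is itself the cyclic conjugate of $\beta^{-1}\zeta_m$. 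Hence $(i,\ell)$ is a $2$-cycle of $\beta^{-1}\zeta_m$ if and only if $(i+j,\ell+j)$ is a $2$-cycle of $\beta'^{-1}\zeta_m$, which says precisely that parallel pairs of $\beta$ correspond bijectively to parallel pairs of $\beta'$ under the shift $i\mapsto i+j$.

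From this correspondence the two claims follow immediately. If $\beta$ is primitive, then $\beta^{-1}\zeta_m$ has no $2$-cycles, so neither does $\beta'^{-1}\zeta_m$, and $\beta'$ has no parallel edges at all; thus $\beta'$ is primitive. For the semiprimitive case I would note that the shift carries the $2$-cycle $(a,b)$ of $\beta$ to the $2$-cycle $(a+j,b+j)$ of $\beta'$, and it carries a parallel pair in which one edge is that $2$-cycle to a parallel pair in which the corresponding edge is the shifted $2$-cycle. Therefore $\beta'$ possesses a forbidden parallel pair (one whose edge is a $2$-cycle) if and only if $\beta$ does; since $\beta$ is semiprimitive and has none, neither does $\beta'$, so $\beta'$ is semiprimitive.

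The only delicate point, and the step I expect to require the most care, is the bookkeeping modulo $m$: the additions in ``$+1$'' and in the shift by $j$ are all taken modulo the number of points, so I would make sure the identity $\beta'(k)=\beta(k-j)+j$ and the relation $\beta'^{-1}\zeta_m=\zeta_m^{j}(\beta^{-1}\zeta_m)\zeta_m^{-j}$ hold cyclically, including at the wrap-around point where $m+1\equiv 1$. Since everything is expressed purely in terms of conjugation by the cyclic permutation $\zeta_m^j$, which acts as an automorphism of the cyclic structure, these identities are formal consequences of the group relations and no genuinely new estimate is needed; the proof is thus essentially a verification that conjugation intertwines all the relevant combinatorial data.
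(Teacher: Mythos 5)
Your proof is correct and follows essentially the same route as the paper, which argues in one sentence that cyclic relabeling preserves the cycle structure of $\beta$ and of $\beta^{-1}\zeta_m$ and carries parallel pairs to parallel pairs. Your write-up merely makes this explicit via the conjugation identity $\beta'^{-1}\zeta_m=\zeta_m^{j}\bigl(\beta^{-1}\zeta_m\bigr)\zeta_m^{-j}$ and the characterization of parallel pairs as $2$-cycles of $\beta^{-1}\zeta_m$, which is a faithful (and welcome) elaboration of the paper's argument rather than a different one.
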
  
Indeed, cyclic relabeling does not change the cycle structure of $\beta$
or $\beta^{-1}\zeta_m$, and parallel pairs of directed edges are taken
into parallel pairs of directed edges under cyclic relabeling.

\begin{definition}
Let $\beta$ be a semiprimitive partition of $\{1,\ldots,m\}$ . We call
the {\em average contribution of $\beta$ to $R(x,y)$ modulo cyclic
  relabeling} the generating function
$$
R_{\overline{\beta}}(x,y)=
\frac{1}{m}\cdot 
\sum_{j=0}^{m-1} R_{\zeta_m^j\beta\zeta_m^{-j}}(x,y).
$$  
\end{definition}  
No matter how many elements are equivalent to $\beta$ modulo cyclic
relabeling, if we replace each $R_{\beta}(x,y)$ with
$R_{\overline{\beta}}(x,y)$, we over-count the contribution of each
equivalent semiprimitive partition $m$ times, and then we divide by
$m$. Therefore we have
\begin{equation}
  \label{eq:ravgb}
R(x,y)=\sum_{\beta}  R_{\overline{\beta}}(x,y)
\end{equation}  
where $\beta$ ranges over all semiprimitive partitions of genus $g$. 

\begin{theorem}
\label{thm:reduced}  
Let $\beta$ be a semiprimitive partition on $m$ points with $c$
cycles, whose diagram has $p$ parallel classes. Suppose we
have selected parallel class representatives as described in
Definition~\ref{def:pc} and, subject to this selection, $\beta$ has
$m_i$ points of type $i$ for $i=0,1,2$. Then the average contribution
of $\beta$ to $R(x,y)$ modulo cyclic relabeling is given by
\begin{align*}
R_{\overline{\beta}}(x,y)&=x^{m} y^{c}\cdot 
\frac{m_0\cdot (1-x^2y)+m_1+m_2\cdot(1+x^2y)}{m\cdot (1-x^2y)^{p+1}}\\
&= \frac{x^{m} y^{c}}{(1-x^2y)^{p+1}}  + 
\frac{(m_2-m_0)\cdot x^{m+2} y^{c+1}}{m\cdot (1-x^2y)^{p+1}} 
\end{align*}

\end{theorem}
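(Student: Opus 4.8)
The plan is to compute $R_\beta$ for a fixed semiprimitive partition $\beta$ first, and only afterwards to average over cyclic relabellings. Write $q=x^2y$ for the weight of one added $2$-cycle (two new points, one new block). By the reconstruction preceding the statement, a reduced partition $\alpha$ with $\sigma[\alpha]=\beta$ is determined uniquely by the numbers $a_1,\dots,a_p\ge 0$ of $2$-cycles added to the $p$ parallel classes together with the location of the label $1$; and since, by Lemma~\ref{lem:1stays}, the label $1$ of $\alpha$ descends to the point $1$ of $\beta$, its admissible locations are exactly the points of $\alpha$ that collapse onto $1$ when the added $2$-cycles are removed. The base blocks of $\beta$ contribute the monomial $x^my^c$, so the whole problem reduces to counting, for each tuple $(a_i)$, the admissible locations of the label $1$.

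The central claim I would prove is that if the point $1$ of $\beta$ has type $t$, with its $t$ incident representative edges lying in parallel classes $C_1,\dots,C_t$, then the number of admissible locations equals $1+\sum_{k=1}^t a_{C_k}$. Indeed, the point $1$ is always admissible, and the $2$-cycles inserted into $C_k$ are parallel to its representative edge, hence by Lemma~\ref{lem:1stays} collapse onto the endpoints of that edge; one of these endpoints is $1$, so each such insertion yields one extra collapse point at $1$, while Lemma~\ref{lem:consecutive} guarantees these $a_{C_k}$ points are the only additional ones reducing to $1$. Here the classes $C_1,\dots,C_t$ are distinct because each parallel class carries a single representative edge: an isolated edge or $2$-cycle represents itself, a parallel pair yields one chosen representative, and no class contains a chain of three edges each parallel to the next, since its two outer edges would then start at the same point and hence coincide. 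Consequently
\[
R_\beta=x^my^c\sum_{(a_i)\ge0}\Bigl(1+\sum_{k=1}^t a_{C_k}\Bigr)\prod_i q^{a_i}
=x^my^c\,\frac{1+(t-1)q}{(1-q)^{p+1}}=:g_t,
\]
so that $g_0=x^my^c(1-q)^{-p}$, $g_1=x^my^c(1-q)^{-(p+1)}$ and $g_2=x^my^c(1+q)(1-q)^{-(p+1)}$; crucially $g_t$ depends only on $t$ and on $p,m,c$, because each incident class contributes the same correction $q/(1-q)^{p+1}$.

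Finally I would average. Cyclic relabelling fixes $p,m,c$ and preserves the type of every point, and as $j$ runs through $0,\dots,m-1$ the point $v_j$ of $\beta$ that is carried to the label $1$ of $\zeta_m^j\beta\zeta_m^{-j}$ runs once through all $m$ points of $\beta$; thus $R_{\zeta_m^j\beta\zeta_m^{-j}}=g_{\mathrm{type}(v_j)}$ and
\[
m\,R_{\overline\beta}=\sum_{j=0}^{m-1}R_{\zeta_m^j\beta\zeta_m^{-j}}=\sum_{v}g_{\mathrm{type}(v)}=m_0g_0+m_1g_1+m_2g_2
=x^my^c\,\frac{m_0(1-q)+m_1+m_2(1+q)}{(1-q)^{p+1}}.
\]
Dividing by $m$ and substituting $q=x^2y$ gives both displayed forms. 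I expect the main obstacle to be the central claim of the second paragraph: one must verify, from the explicit relabelling in Lemma~\ref{lem:1stays} and the two-arc picture of a parallel class in Lemma~\ref{lem:consecutive}, that the added two-cycles (being parallel to the chosen representative) collapse precisely onto its endpoints, so that inserting $a_{C_k}$ of them produces exactly $a_{C_k}$ new collapse points at $1$ and none elsewhere, and that the freedoms coming from different incident classes are mutually exclusive and therefore simply add.
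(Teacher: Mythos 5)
Your proof is correct and follows essentially the same route as the paper's: fix the numbers of $2$-cycles inserted into each parallel class, count the admissible positions of the label $1$ according to the type of the point of $\beta$ it collapses onto, and average over the $m$ cyclic relabelings using the fact that the label $1$ lands on a type-$i$ point exactly $m_i$ times. Your uniform per-tuple count $1+\sum_{k}a_{C_k}$, summed over all tuples $(a_i)$, is just a repackaging of the paper's three-case computation, where the class (or pair of classes) meeting the point $1$ contributes the factor $1/(1-x^2y)^2$ (type $1$) or $(1+x^2y)/(1-x^2y)^3$ (type $2$) and every other class contributes $1/(1-x^2y)$.
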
  
\begin{proof}
When we take the $m$ cyclically relabeled copies $\zeta^{j}_m\beta
\zeta_m^{-j}$ of $\beta$, we will keep the same directed edges as parallel
class representatives. This way the label $1$ will appear as a type $i$
point exactly $m_i$ times, where $i=0,1,2$ in these copies. In all
cases below the factor $x^my^c$ is there because each $\zeta^{j}_m\beta
\zeta_m^{-j}$ is a partition of $m$ elements with $c$ cycles.

{\bf\noindent Case 1}. If $1$ is a type $0$ point then to construct
the only choice we can make is to choose number of $2$-cycles added
to each parallel class of directed edges in $\zeta^{j}_m\beta
\zeta_m^{-j}$. Each parallel class contributes a factor of
$$
\frac{1}{1-x^2y}=\sum_{n=0}^{\infty} (x^2y)^n,
$$
as each $2$-cycle is a distinct part (contributing a factor of $y$)
with two points (factor of $x^2$).

{\bf\noindent Case 2}. If $1$ is a type $1$ point then, besides
selecting the number of $2$-cycles added to each parallel class, we
must also select the position of the label $1$ in $\alpha$. This
selection must be made within the parallel class whose representative
contains $1$, and the endpoint must be in the same consecutive set of
endpoints that contains the $1$ of $\zeta^{j}_m\beta\zeta_m^{-j}$. If
there are $n$ edges added in this parallel class, then this choice may
be performed $(n+1)$ ways. Hence the parallel class represented by the
directed edge containing the label $1$ in $\zeta^{j}_m\beta\zeta_m^{-j}$
contributes a factor of 
$$
\frac{1}{(1-x^2y)^2}=\sum_{n=0}^{\infty} (n+1)\cdot (x^2y)^n,
$$
all other parallel classes contribute a factor of $1/(1-x^2y)$. 

{\bf\noindent Case 3}. The case when $1$ is a type $2$ point is similar
to the previous case, except that now the $1$ of $\alpha$ may be the
endpoint of a directed edge in one of two parallel classes, represented
by a directed edge containing $1$ in  $\zeta^{j}_m\beta\zeta_m^{-j}$. If
these two classes contain $n$ added $2$-cycles then there are $n$
ways to select the number of $2$-cycles in one of the two classes
and $n$ ways to select the position of the label $1$ in $\alpha$. Hence
these two parallel classes contribute a factor of 
$$
\frac{1+x^2y}{(1-x^2y)^3}=\sum_{n=0}^{\infty} (n+1)^2\cdot (x^2y)^n,
$$
all other parallel classes contribute a factor of $1/(1-x^2y)$. 

\end{proof}

As a direct consequence of Theorem~\ref{thm:finite} and
Theorem~\ref{thm:reduced} we have the following result.

\begin{corollary}
\label{cor:reduced}
For a fixed genus $g$, the generating function $R(x,y)$ of reduced
partitions of genus $g$ is a rational function of $x$ and
$y$. Moreover, the denominator of $R(x,y)$ is a power of $1-x^2y$.
\end{corollary}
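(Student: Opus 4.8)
The plan is to assemble Corollary~\ref{cor:reduced} directly from the two results it cites, treating it as a bookkeeping consequence rather than a new computation. First I would recall from Theorem~\ref{thm:finite} that for a fixed genus $g$ there are only finitely many semiprimitive partitions, and from equation~\eqref{eq:ravgb} that the generating function of all reduced partitions of genus $g$ decomposes as the finite sum $R(x,y)=\sum_{\beta} R_{\overline{\beta}}(x,y)$, where $\beta$ ranges over these finitely many semiprimitive partitions. Since a finite sum of rational functions is rational, and since a finite sum of fractions whose denominators are all powers of the single irreducible polynomial $1-x^2y$ again has a denominator that is a power of $1-x^2y$ (take the maximum exponent appearing and clear denominators), the corollary reduces to checking that each summand $R_{\overline{\beta}}(x,y)$ individually has the asserted form.

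The second step is therefore to invoke Theorem~\ref{thm:reduced}, which gives the closed form
\[
R_{\overline{\beta}}(x,y)=\frac{x^{m}y^{c}}{(1-x^2y)^{p+1}}+\frac{(m_2-m_0)\cdot x^{m+2}y^{c+1}}{m\cdot(1-x^2y)^{p+1}}
\]
for each semiprimitive $\beta$ with $m$ points, $c$ cycles, and $p$ parallel classes. Each such term is manifestly a rational function of $x$ and $y$ whose numerator is a polynomial and whose denominator is $(1-x^2y)^{p+1}$, a power of $1-x^2y$. I would note that the rational coefficient $1/m$ and the integer $(m_2-m_0)$ introduce no new denominators in $x,y$, so the denominator structure is untouched.

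Finally I would combine the two observations: summing finitely many expressions of the displayed shape over all semiprimitive $\beta$ of genus $g$ yields a rational function, and after bringing the sum over a common denominator, that common denominator is $(1-x^2y)^{P}$ where $P$ is the largest value of $p+1$ occurring among the finitely many semiprimitive partitions of genus $g$. This establishes both claims of the corollary. There is essentially no obstacle here: all the analytic and combinatorial content has already been extracted in Theorem~\ref{thm:finite} (finiteness) and Theorem~\ref{thm:reduced} (the explicit per-class formula), and the only remaining point is the elementary algebraic fact that the set of rational functions with denominator a power of a fixed irreducible polynomial is closed under finite sums. The one thing worth stating carefully is that finiteness is exactly what guarantees the sum is finite, so that no issue of convergence or of an unbounded supply of ever-higher denominator powers can arise.
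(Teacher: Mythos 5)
Your proposal is correct and follows exactly the paper's route: the paper states Corollary~\ref{cor:reduced} as a direct consequence of Theorem~\ref{thm:finite} (finitely many semiprimitive partitions of genus $g$), the decomposition~\eqref{eq:ravgb}, and the explicit formula of Theorem~\ref{thm:reduced}, which is precisely the argument you assemble. You merely spell out the elementary closure-under-finite-sums step that the paper leaves implicit, which is fine.
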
  

Combining Corollary~\ref{cor:reduced} with Theorem~\ref{thm:fromreduced}
we obtain the main result of our paper.

\begin{theorem}
  \label{thm:main}
For a fixed $g$ the generating function
$P(x,y)=\sum_{n,k} p(n,k) x^ny^k$ of genus $g$ partitions of $n$
elements with $k$ parts is algebraic. More precisely, it may be obtained
by substituting $x$, $y$ and $\sqrt{(x+xy-1)^2-4x^2y}$ into a rational
expression.  
\end{theorem}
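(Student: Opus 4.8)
The plan is to read off Theorem~\ref{thm:main} as a formal consequence of the two results already established, namely Theorem~\ref{thm:fromreduced} and Corollary~\ref{cor:reduced}; the only genuine task is to package the outcome as a rational expression in $x$, $y$ and $t:=\sqrt{(x+xy-1)^2-4x^2y}$. The organising observation is that $t^2=(x+xy-1)^2-4x^2y$ is a polynomial in $x$ and $y$, so $t$ is algebraic of degree at most two over the field $\mathbb{Q}(x,y)$ of rational functions. Hence the ring $\mathbb{Q}(x,y)[t]$ is a field extension of degree at most two, and every one of its elements is algebraic over $\mathbb{Q}(x,y)$. It therefore suffices to prove that $P(x,y)$ belongs to $\mathbb{Q}(x,y)[t]$.

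First I would apply Theorem~\ref{thm:fromreduced}, which expresses
$$
P(x,y)=R\!\left(\frac{D(x,y)-1}{y},\,y\right)\cdot\frac{1}{t}.
$$
Using the closed form (\ref{eq:dxy}) one has $D(x,y)-1=\frac{1-x-xy-t}{2x}$, so the first argument of $R$ is $\frac{1-x-xy-t}{2xy}$, which is plainly a rational expression in $x$, $y$ and $t$. By Corollary~\ref{cor:reduced} the series $R$ is a rational function of its two arguments (its denominator being a power of $1-x^2y$); substituting the rational expression above for the first argument while leaving the second equal to $y$ then produces a rational expression in $x$, $y$ and $t$. The remaining factor $1/t=t/t^2$ also lies in $\mathbb{Q}(x,y)[t]$ because $t^2\in\mathbb{Q}[x,y]$, so $P(x,y)\in\mathbb{Q}(x,y)[t]$, as required. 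Writing $P=A(x,y)+B(x,y)\,t$ with $A,B\in\mathbb{Q}(x,y)$, the relation $(P-A)^2=B^2\big((x+xy-1)^2-4x^2y\big)$ is, after clearing denominators, a nontrivial polynomial identity in $P$ over $\mathbb{Q}[x,y]$, which exhibits $P$ as algebraic and identifies it as the promised rational expression in $x$, $y$ and $t$.

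There is no deep obstacle, since all of the combinatorial content is carried by the cited statements; the work here is purely the algebra of the quadratic extension $\mathbb{Q}(x,y)[t]/\mathbb{Q}(x,y)$. The one point that I would check carefully is that the substitution of $\frac{D(x,y)-1}{y}$ into the first slot of $R$ is legitimate as a formal power series operation: this holds because $D(x,y)-1$ is divisible by $xy$ (each nonconstant term of $D$ carries a positive power of $x$ and records at least one block), so $\frac{D(x,y)-1}{y}$ has zero constant term and the composition is well defined. The same divisibility shows that the denominator $1-\big(\frac{D(x,y)-1}{y}\big)^2 y$ arising from Corollary~\ref{cor:reduced} has constant term $1$ and is therefore invertible, so no spurious pole is introduced by the substitution.
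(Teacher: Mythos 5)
Your proposal is correct and follows essentially the same route as the paper, which proves Theorem~\ref{thm:main} simply by combining Corollary~\ref{cor:reduced} with Theorem~\ref{thm:fromreduced}. Your additional checks---that $(D(x,y)-1)/y$ has zero constant term so the substitution is a legitimate formal power series operation, and that the substituted denominator $1-\bigl(\tfrac{D(x,y)-1}{y}\bigr)^2 y$ remains invertible---are details the paper leaves implicit (though they reappear in Section~\ref{sec:extract}), and your packaging of the conclusion via the quadratic extension $\mathbb{Q}(x,y)[t]$ is a clean way to make the word ``algebraic'' precise.
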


\section{Extracting the coefficients from our generating functions}
\label{sec:extract}

In this section, we describe a method to explicitly compute the
generating function $P(x,y)$ of all partitions of a fixed genus $g$ from
the generating function $R(x,y)$ of the reduced partitions. Our
procedure allows us to write $P(x,y)$ in such a form that a formula due
to Gessel may be used to extract the coefficient, thus obtaining
formulas for all such partitions of a set of a given size, with a given
number of parts. 

By Corollary~\ref{cor:reduced}, the generating function $R(x,y)$ of
reduced partitions is a linear combination of rational functions of the
form
\begin{equation}
\label{eq:rijk}  
r_{i_1,i_2,i_3}(x,y)=\frac{x^{i_1}y^{i_2}}{(1-x^2y)^{i_3}}.
\end{equation}
To obtain $P(x,y)$, by
Theorem~\ref{thm:fromreduced}, we need to replace each occurrence of $x$
by the quotient $(D(x,y)-1)/y$, and then multiply the result by the
multiplicative inverse of
\begin{equation}
  \label{eq:discr}
  \Delta(x,y)=\sqrt{(x+xy-1)^2-4x^2y}=1-x-xy-2x(D(x,y)-1).
\end{equation}
Thus, to obtain $P(x,y)$ from $R(x,y)$, we have to replace each 
$r_{i_1,i_2,i_3}(x,y)$ with 
\begin{equation}
\label{eq:pijk}    
p_{i_1,i_2,i_3}(x,y)
=\frac{\left(\frac{D(x,y)-1}{y}\right)^{i_1}
  y^{i_2}}{\left(1-\frac{(D(x,y)-1)^2}{y^2}\cdot y\right)^{i_3}}
\cdot
\Delta(x,y)^{-1}
=
\frac{\left(\frac{D(x,y)-1}{y}\right)^{i_1}
  y^{i_2}}{\left(1-\frac{(D(x,y)-1)^2}{y}\right)^{i_3}}\cdot
\Delta(x,y)^{-1}. 
\end{equation}
We may express each $p_{i_1,i_2,i_3}(x,y)$ in terms of $\Delta(x,y)$ as
follows. First we simplify the denominator of $p_{i_1,i_2,i_3}(x,y)$ using
the following lemma.
\begin{lemma}
\label{lem:denominator}
  We have
  $$
\frac{1}{1-\frac{(D(x,y)-1)^2}{y}}=\frac{1-(xy+x-1)\cdot \Delta(x,y)^{-1}}{2}.
  $$
\end{lemma}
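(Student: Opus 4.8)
The plan is to establish the identity in Lemma~\ref{lem:denominator} by reducing the left-hand side to a closed form using the defining quadratic equation \eqref{eq:Deq} for $D(x,y)$, and then matching it against the right-hand side via the formula \eqref{eq:discr} that expresses $\Delta(x,y)$ in terms of $D(x,y)$. Set $D:=D(x,y)$ and $\Delta:=\Delta(x,y)$ for brevity. The key relation to exploit is \eqref{eq:Deq2}, namely $xD(D+y-1)=D-1$, which lets me rewrite $(D-1)$ in product form, so that
$$
\frac{(D-1)^2}{y}=\frac{x^2D^2(D+y-1)^2}{y}.
$$
Rather than expanding this directly, I would manipulate $1-(D-1)^2/y$ into a factored expression whose numerator can be simplified using \eqref{eq:Deq}.

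First I would compute $1-\frac{(D-1)^2}{y}=\frac{y-(D-1)^2}{y}$ and focus on the numerator $y-(D-1)^2$. Substituting $D-1=xD(D+y-1)$ from \eqref{eq:Deq2} is one route, but a cleaner approach is to use \eqref{eq:discr0}, which gives $\Delta=1+x-xy-2xD$, equivalently $2xD=1+x-xy-\Delta$. From this I can solve for $D$ and for $D-1$ as linear expressions in $\Delta$ (with coefficients rational in $x,y$), and substitute these into $y-(D-1)^2$. The hope is that after substitution the numerator factors as a constant multiple of $\Delta\bigl(\Delta+(\text{something})\bigr)$, with the $\Delta^2=(x+xy-1)^2-4x^2y$ term simplifying via its definition, leaving an expression in which dividing by $y$ and inverting produces exactly the stated right-hand side $\tfrac{1}{2}\bigl(1-(xy+x-1)\Delta^{-1}\bigr)$.

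Concretely, I expect the cleanest bookkeeping comes from writing $D-1=\frac{x-xy-1-\Delta}{2x}$ (obtained from $2xD=1+x-xy-\Delta$), so that
$$
(D-1)^2=\frac{(x-xy-1-\Delta)^2}{4x^2}
=\frac{(x-xy-1)^2-2(x-xy-1)\Delta+\Delta^2}{4x^2}.
$$
Since $\Delta^2=(x+xy-1)^2-4x^2y$, the two squared terms $(x-xy-1)^2$ and $(x+xy-1)^2$ combine, and I anticipate the $4x^2y$ from $\Delta^2$ will cancel against a factor of $y$, so that $y-(D-1)^2$ collapses to a multiple of $\Delta(\Delta+x-xy-1)$ divided by $4x^2$. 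Taking the reciprocal and multiplying by $y$ should then yield the target after one more application of the relation $\Delta^2=(x+xy-1)^2-4x^2y$ to clear the resulting $\Delta^2$ in the denominator.

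The main obstacle I anticipate is purely the algebraic bookkeeping: there are several ways to express $D$ and $\Delta$ in terms of one another, and choosing substitutions that make the cancellations transparent (rather than producing a large polynomial that only simplifies after heavy expansion) is the delicate part. In particular, I must be careful that the factor of $\Delta$ appearing in the numerator of $y-(D-1)^2$ really does pair with the $\Delta^2$ in the denominator to leave a single negative power $\Delta^{-1}$, matching the right-hand side exactly; verifying that the leftover rational coefficient is precisely $-(xy+x-1)/2$ with constant term $1/2$ is where a sign or normalization error would most easily creep in. I would double-check the final identity by clearing denominators and confirming it reduces to \eqref{eq:Deq} or \eqref{eq:discr0}, which guarantees correctness without relying on the intermediate manipulations being error-free.
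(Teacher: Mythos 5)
Your strategy is correct in outline and does close, but it is the mirror image of the paper's argument rather than the same one: the paper eliminates $\Delta(x,y)$ in favor of $D(x,y)$ (it substitutes $\Delta(x,y)=1-x-xy-2x(D(x,y)-1)$ from \eqref{eq:discr} into the \emph{right}-hand side, cross-multiplies, and reduces the claimed identity to the quadratic equation \eqref{eq:Deq}), whereas you eliminate $D(x,y)$ in favor of $\Delta(x,y)$, substituting into the \emph{left}-hand side and reducing everything to the defining relation $\Delta(x,y)^2=(x+xy-1)^2-4x^2y$. Your direction is arguably the cleaner of the two once executed correctly: writing $A=1-x-xy=-(xy+x-1)$ and $D-1=\frac{A-\Delta}{2x}$, one gets
\begin{equation*}
y-(D-1)^2=y-\frac{A^2-2A\Delta+\Delta^2}{4x^2}
=\frac{4x^2y-A^2+A\Delta}{2x^2}
=\frac{\Delta(A-\Delta)}{2x^2},
\end{equation*}
using $\Delta^2=A^2-4x^2y$ twice; hence
\begin{equation*}
\frac{1}{1-\frac{(D-1)^2}{y}}=\frac{2x^2y}{\Delta(A-\Delta)}
=\frac{2x^2y(A+\Delta)}{\Delta(A^2-\Delta^2)}
=\frac{A+\Delta}{2\Delta}
=\frac{1-(xy+x-1)\Delta^{-1}}{2},
\end{equation*}
which is exactly the statement. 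The paper's route avoids manipulating the radical and instead checks two rational identities in $D$; yours trades that for a one-line difference-of-squares finish.

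One concrete error must be fixed before your plan goes through. From $2xD=1+x-xy-\Delta$ (which you state correctly) one gets $D-1=\frac{1-x-xy-\Delta}{2x}=\frac{-(xy+x-1)-\Delta}{2x}$, consistent with \eqref{eq:Dy}; you instead wrote $D-1=\frac{x-xy-1-\Delta}{2x}$. The polynomial $x-xy-1$ equals $-(1-x+xy)$, which is \emph{not} $\pm(x+xy-1)$, so with your expression the promised cancellation fails: $(x-xy-1)^2$ and the $(x+xy-1)^2$ coming from $\Delta^2$ are genuinely different polynomials and do not combine. This is precisely the kind of sign slip you flagged as the main risk, and your proposed safeguard (clearing denominators and checking against \eqref{eq:Deq} or \eqref{eq:discr0}) would indeed catch it; with the corrected sign, the computation displayed above completes the proof.
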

\begin{proof}
After multiplying the numerator and the denominator by $\Delta(x,y)$, 
using (\ref{eq:discr}) and simplifying by $2$,  we may rewrite the
right hand side as 
$$
\frac{\Delta(x,y)-(xy+x-1)}{2\cdot\Delta(x,y)}=
\frac{1-x-xy-x(D(x,y)-1)}{1-x-xy-2x(D(x,y)-1)}.
$$
Multiplying the numerator and the denominator on the right hand side by
$xy$ the stated equality is equivalent to
\begin{equation}
\label{eq:Donly} 
\frac{xy}{xy-x(D(x,y)-1)^2}=
\frac{1-x-xy-x(D(x,y)-1)}{1-x-xy-2x(D(x,y)-1)}.
\end{equation}
Observe next that (\ref{eq:Deq}) may be rewritten as
$$
x(D((x,y)-1)^2+(xy+x-1)(D(x,y)-1)+xy=0. 
$$
Using this equation we may replace $-x(D((x,y)-1)^2$ with
$(xy+x-1)(D(x,y)-1)+xy$ on the left hand side of (\ref{eq:Donly}). We
obtain that the stated equality is equivalent to 
$$
\frac{xy}{2xy+(xy+x-1)(D(x,y)-1)}=
\frac{1-x-xy-x(D(x,y)-1)}{1-x-xy-2x(D(x,y)-1)}.
$$
The last equation is easily seen to be an equivalent form of
(\ref{eq:Deq}).  
\end{proof}  
Directly from the definition of $D(x,y)$ we obtain
\begin{equation}
\label{eq:Dy}  
\frac{D(x,y)-1}{y}=\frac{-(xy+x-1)-\Delta(x,y)}{2xy}.
\end{equation}  
Multiplying the equation stated in Lemma~\ref{lem:denominator} with
equation (\ref{eq:Donly}) we obtain the following result.
\begin{lemma}
  \label{lem:samedeg}
  We have
 $$
\frac{\frac{D(x,y)-1}{y}}{1-\frac{D(x,y)^2-1}{y}}=\frac{x}{\Delta(x,y)}.
 $$ 
\end{lemma}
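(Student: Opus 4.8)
The printed identity should be read with denominator $1-\frac{(D(x,y)-1)^2}{y}$ rather than the literal $1-\frac{D(x,y)^2-1}{y}$: the factor being simplified is the exponent-$i_3$ denominator of \eqref{eq:pijk}, produced by the substitution $x^2y\mapsto (D(x,y)-1)^2/y$, and it is exactly the quantity handled in Lemma~\ref{lem:denominator}. With the literal $D(x,y)^2-1$ the asserted equality does not hold (a single numerical substitution, say $x=1/10$, $y=1$, already breaks it), so I take the intended denominator to be $1-\frac{(D-1)^2}{y}$ and prove that version. Abbreviating $D:=D(x,y)$, $\Delta:=\Delta(x,y)$ and $c:=xy+x-1$, the plan is to read the compound fraction as the product
$$
\frac{\frac{D-1}{y}}{1-\frac{(D-1)^2}{y}}=\frac{D-1}{y}\cdot\frac{1}{1-\frac{(D-1)^2}{y}},
$$
so that each factor can be replaced by a closed form already available in the excerpt.

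For the first factor I would quote \eqref{eq:Dy}, namely $\frac{D-1}{y}=\frac{-c-\Delta}{2xy}$, and for the second Lemma~\ref{lem:denominator}, which after writing $\Delta^{-1}=1/\Delta$ becomes $\frac{1}{1-\frac{(D-1)^2}{y}}=\frac{1-c\,\Delta^{-1}}{2}=\frac{\Delta-c}{2\Delta}$. Multiplying the two factors and grouping the numerator as a difference of squares gives
$$
\frac{-c-\Delta}{2xy}\cdot\frac{\Delta-c}{2\Delta}=\frac{-(c+\Delta)(\Delta-c)}{4xy\,\Delta}=\frac{c^2-\Delta^2}{4xy\,\Delta}.
$$
The concluding step is to invoke the definition \eqref{eq:discr} of $\Delta$ in the form $\Delta^2=c^2-4x^2y$, whence $c^2-\Delta^2=4x^2y$ and the expression collapses to $\frac{4x^2y}{4xy\,\Delta}=\frac{x}{\Delta}$, which is the claim. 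This is essentially the one-line argument indicated right after Lemma~\ref{lem:denominator}, i.e.\ multiplying that lemma by \eqref{eq:Dy}.

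An alternative that avoids quoting both prior results would first combine the compound fraction over a common denominator into $\frac{D-1}{y-(D-1)^2}$, then eliminate $(D-1)^2$ using the quadratic \eqref{eq:Deq} (equivalently \eqref{eq:Deq2}), and finally re-express $D-1$ and the resulting denominator through $\Delta$ via \eqref{eq:discr}; this again terminates in the same difference-of-squares cancellation. I expect no conceptual difficulty here: the single step that makes everything work is the identity $\Delta^2=c^2-4x^2y$, and the only real pitfall is sign bookkeeping — keeping $-(c+\Delta)(\Delta-c)$ straight so that it becomes $+(c^2-\Delta^2)$, and checking that the numerical prefactor emerges as $x$ (and not, say, $-x$ or $x/y$) after cancellation against $4xy\,\Delta$.
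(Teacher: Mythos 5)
Your proof is correct and takes essentially the same route as the paper, which likewise obtains the lemma by multiplying the identity of Lemma~\ref{lem:denominator} with equation~(\ref{eq:Dy}) and collapsing the numerator via the difference of squares $(xy+x-1)^2-\Delta(x,y)^2=4x^2y$. You were also right to read the denominator as $1-\frac{(D(x,y)-1)^2}{y}$: the printed $D(x,y)^2-1$ is a typo, as the lemma's use in (\ref{eq:pijk}) and Proposition~\ref{prop:pijk} confirms.
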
  
Indeed, the stated equality directly follows from
$$
(-(xy+x-1)-\Delta(x,y))\cdot \frac{\Delta(x,y)-(xy+x-1)}{\Delta(x,y)}
=
\frac{(xy+x-1)^2-\Delta(x,y)^2}{\Delta(x,y)}
$$
after simplification. 

\begin{proposition}
\label{prop:pijk}
  The expression $p_{i_1,i_2,i_3}(x,y)$ is given by
$$
  p_{i_1,i_2,i_3}(x,y)=
  \begin{cases}
\di \frac{x^{i_1}y^{i_2}}{\Delta(x,y)^{i_1+1}}    &\mbox{if $i_1=i_3$;}\\
\\
\di \frac{x^{i_1} y^{i_2}}{\Delta(x,y)^{i_1+1}}
\cdot\left(\frac{1-(xy+x-1)\cdot \Delta(x,y)^{-1}}{2}\right)^{i_3-i_1}
 &\mbox{if $i_1<i_3$;}\\
\\
\di \frac{x^{i_3} y^{i_2}}{\Delta(x,y)^{i_3+1}} \left(\frac{-(xy+x-1)-\Delta(x,y)}{2xy}\right)^{i_1-i_3} &\mbox{if $i_1>i_3$.}\\
  \end{cases}  
$$
\end{proposition}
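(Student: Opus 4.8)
The plan is to prove the three cases of Proposition~\ref{prop:pijk} by starting from the defining formula \eqref{eq:pijk} for $p_{i_1,i_2,i_3}(x,y)$ and substituting the two key algebraic identities already established, namely Lemma~\ref{lem:denominator} for the denominator factor $1/(1-(D(x,y)-1)^2/y)$ and Lemma~\ref{lem:samedeg} for the combined ratio. The natural first move is to rewrite
$$
p_{i_1,i_2,i_3}(x,y)=\left(\frac{D(x,y)-1}{y}\right)^{i_1} y^{i_2}
\cdot\left(\frac{1}{1-\frac{(D(x,y)-1)^2}{y}}\right)^{i_3}
\cdot\Delta(x,y)^{-1},
$$
and then distribute the factors so that as many copies of $(D(x,y)-1)/y$ as possible are paired with copies of the denominator factor, invoking Lemma~\ref{lem:samedeg} to replace each such pair by $x/\Delta(x,y)$. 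The number of complete pairings is $\min(i_1,i_3)$, and the three cases of the proposition correspond exactly to whether the leftover factors are copies of $(D(x,y)-1)/y$ (when $i_1>i_3$), copies of the denominator factor (when $i_1<i_3$), or nothing at all (when $i_1=i_3$).

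First I would treat the balanced case $i_1=i_3$. Here all $i_1$ denominator factors pair with all $i_1$ numerator factors, each pair becoming $x/\Delta(x,y)$ by Lemma~\ref{lem:samedeg}, leaving $y^{i_2}\Delta(x,y)^{-1}$ untouched; multiplying out gives $x^{i_1}y^{i_2}/\Delta(x,y)^{i_1+1}$, which is the stated expression. For the case $i_1<i_3$, I pair all $i_1$ numerator factors with $i_1$ of the $i_3$ denominator factors, producing $x^{i_1}\Delta(x,y)^{-i_1}$, and I am left with $i_3-i_1$ surplus denominator factors, each of which is replaced using Lemma~\ref{lem:denominator} by $(1-(xy+x-1)\Delta(x,y)^{-1})/2$; collecting the remaining $y^{i_2}\Delta(x,y)^{-1}$ yields the second case. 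For the case $i_1>i_3$ I pair all $i_3$ denominator factors with $i_3$ numerator factors, giving $x^{i_3}\Delta(x,y)^{-i_3}$, and the $i_1-i_3$ surplus numerator factors $(D(x,y)-1)/y$ are rewritten by \eqref{eq:Dy} as $(-(xy+x-1)-\Delta(x,y))/(2xy)$; together with $y^{i_2}\Delta(x,y)^{-1}$ this produces the third case.

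The computation is essentially bookkeeping once the pairing strategy is fixed, so I do not expect a serious obstacle; the one point requiring care is making sure that the single ``extra'' factor of $\Delta(x,y)^{-1}$ coming from the $\Delta(x,y)^{-1}$ in \eqref{eq:pijk} is accounted for consistently in all three cases, which is what turns each $\Delta(x,y)^{-\min(i_1,i_3)}$ into the exponent $-(\min(i_1,i_3)+1)$ appearing in the statement. A secondary caveat is that the intermediate manipulations divide by $y$ and by $\Delta(x,y)$, so these identities are to be read as equalities of elements in the appropriate field of formal Laurent series (or equivalently as consequences of the polynomial relation \eqref{eq:Deq}); since the final expressions are again power series in $x$ with Laurent coefficients in $y$, no genuine analytic difficulty arises. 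I would present the three cases in the order $i_1=i_3$, $i_1<i_3$, $i_1>i_3$, each as a short chain of substitutions citing Lemmas~\ref{lem:denominator}, \ref{lem:samedeg} and equation~\eqref{eq:Dy}.
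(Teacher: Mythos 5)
Your proposal is correct and follows essentially the same route as the paper: you pair $\min(i_1,i_3)$ copies of $\frac{D(x,y)-1}{y}$ with denominator factors via Lemma~\ref{lem:samedeg}, then dispose of the surplus factors using Lemma~\ref{lem:denominator} when $i_1<i_3$ and equation~(\ref{eq:Dy}) when $i_1>i_3$, which is exactly the paper's factorization argument. The bookkeeping of the extra $\Delta(x,y)^{-1}$ and the resulting exponents also matches the stated formulas, so no gap remains.
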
  
\begin{proof}
The first line is a direct consequence of Lemma~\ref{lem:samedeg}. To
obtain the second line, we write
$$
\frac{\left(\frac{D(x,y)-1}{y}\right)^{i_1}
  y^{i_2}}{\left(1-\frac{(D(x,y)-1)^2}{y}\right)^{i_3}}
=
\left(\frac{\frac{D(x,y)-1}{y}
}{1-\frac{(D(x,y)-1)^2}{y}}\right)^{i_1}\cdot y^{i_2} \cdot 
\frac{1}{\left(1-\frac{(D(x,y)-1)^2}{y}\right)^{i_3-i_1}},
$$
apply Lemma~\ref{lem:samedeg} to the first factor and
Lemma~\ref{lem:denominator} to the third factor. Finally, to obtain the
third line, we write
$$
\frac{\left(\frac{D(x,y)-1}{y}\right)^{i_1}
  y^{i_2}}{\left(1-\frac{(D(x,y)-1)^2}{y}\right)^{i_3}}
=
\left(\frac{\frac{D(x,y)-1}{y}
}{1-\frac{(D(x,y)-1)^2}{y}}\right)^{i_3}\cdot y^{i_2} \cdot 
\left(\frac{D(x,y)-1}{y}\right)^{i_1-i_3},
$$
we apply Lemma~\ref{lem:samedeg} to the first factor and
equation~(\ref{eq:Dy}) to the third factor. 
\end{proof}  
\begin{corollary}
The generating function $P(x,y)$ is a linear combination of Laurent
polynomials of $x$ and $y$, multiplied with negative powers of
$\triangle(x,y)$. 
\end{corollary}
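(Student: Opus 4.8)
The plan is to read the statement off directly from Proposition~\ref{prop:pijk}. First I would observe that $P(x,y)$ is a \emph{finite} linear combination of the functions $p_{i_1,i_2,i_3}(x,y)$: by Corollary~\ref{cor:reduced} the series $R(x,y)$ is a finite linear combination of the rational functions $r_{i_1,i_2,i_3}(x,y)$ of \eqref{eq:rijk}, and by Theorem~\ref{thm:fromreduced}, encoded in \eqref{eq:pijk}, the passage from $R(x,y)$ to $P(x,y)$ replaces each $r_{i_1,i_2,i_3}$ by the corresponding $p_{i_1,i_2,i_3}$. Hence it is enough to show that each $p_{i_1,i_2,i_3}(x,y)$ is a linear combination of Laurent polynomials in $x,y$ times negative powers of $\Delta(x,y)$, and to verify this in the three cases of Proposition~\ref{prop:pijk}.

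The first two cases are routine. For $i_1=i_3$ the expression $x^{i_1}y^{i_2}\,\Delta(x,y)^{-(i_1+1)}$ is already of the desired form. For $i_1<i_3$ I would expand the factor $\bigl(\tfrac{1-(xy+x-1)\Delta(x,y)^{-1}}{2}\bigr)^{i_3-i_1}$ by the binomial theorem; each summand is a scalar times $(xy+x-1)^{\ell}\Delta(x,y)^{-\ell}$ with $0\le \ell\le i_3-i_1$, so after multiplication by $x^{i_1}y^{i_2}\Delta(x,y)^{-(i_1+1)}$ every term is a polynomial times $\Delta(x,y)^{-(i_1+1+\ell)}$, with each exponent strictly negative.

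The case $i_1>i_3$ is the one I expect to be the main obstacle, since the factor $\bigl(\tfrac{-(xy+x-1)-\Delta(x,y)}{2xy}\bigr)^{i_1-i_3}$ appears, at first sight, to contribute positive powers of $\Delta(x,y)$ after a binomial expansion. The resolution is to use that $\Delta(x,y)^2=(x+xy-1)^2-4x^2y$ is a polynomial, which lets me collect the expansion of $(-(xy+x-1)-\Delta(x,y))^{i_1-i_3}$ according to the parity of the power of $\Delta(x,y)$ and write it as $A(x,y)+B(x,y)\,\Delta(x,y)$ with $A,B$ polynomials. Substituting this back gives
$$
p_{i_1,i_2,i_3}(x,y)=\frac{x^{i_3}y^{i_2}A(x,y)}{(2xy)^{i_1-i_3}}\,\Delta(x,y)^{-(i_3+1)}+\frac{x^{i_3}y^{i_2}B(x,y)}{(2xy)^{i_1-i_3}}\,\Delta(x,y)^{-i_3}.
$$
Because $A,B$ are polynomials, both coefficients are Laurent polynomials (a polynomial divided by the monomial $(2xy)^{i_1-i_3}$), and since $i_3=p+1\ge 1$ both exponents $-(i_3+1)$ and $-i_3$ are negative. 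Summing these representations over all triples $(i_1,i_2,i_3)$ occurring in $R(x,y)$ then exhibits $P(x,y)$ in the asserted form.
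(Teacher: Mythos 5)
Your proposal is correct and takes essentially the same route as the paper: the paper's entire proof is the remark that even positive powers of $\Delta(x,y)$ are polynomials while odd positive powers are a polynomial times $\Delta(x,y)^{\pm1}$, which is exactly the parity decomposition $A(x,y)+B(x,y)\,\Delta(x,y)$ you use to handle the case $i_1>i_3$. Your write-up simply spells out in more detail what the paper leaves implicit (the finiteness of the linear combination, the trivial cases $i_1\le i_3$, and the check that the resulting exponents of $\Delta(x,y)$ are negative).
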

Indeed, note that any even positive power of $\Delta(x,y)$ is a polynomial, and
any odd positive power of of $\Delta(x,y)$ may be written as a product
of a polynomial and of $\Delta(x,y)^{-1}$.

It remains to show how to compute the coefficient of $x^ny^k$ in a
negative integer power of $\Delta(x,y)$. For this purpose, Gessel's
following formula may be used, see~\cite[Eq.~(2)]{Gessel}:
$$
\frac{1}{(1-2x-2y+(x-y)^2)^{\alpha}}=\sum_{i,j\geq 0}
\frac{(\alpha+1/2)_{i+j}(2\alpha)_{i+j}}{i!j!
  (\alpha+1/2)_{i}(\alpha+1/2)_{j}}  x^{i} y^{j}.
$$
Here each $(u)_m=u(u+1)\cdots (u+m-1)$ is a rising factorial. 
As pointed out by Strehl~\cite[p.~180]{Strehl} (see also \cite[p.~64]{Gessel}),
\cite[Eq.~(2)]{Gessel} is a consequence of classical results in the
theory of special functions. We used this formula in~\cite{Cori-Hetyei}
in the special case when $\alpha$ is the half of an odd integer, to
count partitions of genus $1$. As noted in~\cite{Cori-Hetyei},
replacing each appearance of $y$ with $xy$ yields a formula
for a negative power of $\Delta(x,y)$:
$$
\frac{1}{(1-2x(1+y)+x^2(1-y)^2)^{\alpha}}=\sum_{i,j\geq 0}
\frac{(\alpha+1/2)_{i+j}(2\alpha)_{i+j}}{i!j!
  (\alpha+1/2)_{i}(\alpha+1/2)_{j}}  x^{i+j} y^{j}.
$$
Next we replace $j$ with $k$ and $i$ with $n-k$. Thus we obtain:
\begin{equation}
\label{eq:Gessel}  
\frac{1}{(1-2x(1+y)+x^2(1-y)^2)^{\alpha}}=\sum_{n\geq k\geq 0}
\frac{(\alpha+1/2)_{n}(2\alpha)_{n}}{(n-k)!k!
  (\alpha+1/2)_{n-k}(\alpha+1/2)_{k}}  x^{n} y^{k}.
\end{equation}
In the case when $\alpha=m+1/2$ is the half of an odd integer, it is
easy to see that
\begin{equation}
\label{eq:Gessel2}  
\frac{(\alpha+1/2)_{n}(2\alpha)_{n}}{(n-k)!k!
  (\alpha+1/2)_{n-k}(\alpha+1/2)_{k}}
=
\frac{\binom{n+2m}{m}\binom{n+m}{k}\binom{n+m}{n-k}}{\binom{2m}{m}}\quad\mbox{holds}.
\end{equation}
When we count partitions of a higher fixed
genus, substituting integer values of $\alpha$ may also be necessary.

\section{Partitions of genus one}
\label{sec:gen1}

As a ``warm up'', in this section we apply the results of the preceding
sections and quickly reproduce the generating function formula for genus
one partitions, first found in~\cite{Cori-Hetyei}.  
For these partitions Theorem~\ref{thm:finite} gives that any
primitive partition is a partition on at most $6(2\cdot 1-1)=6$
elements. Simple trial and error gives that the only primitive
partitions are $\beta_1=(1,3)(2,4)$ and $\beta_2=(1,4)(2,5)(3,6)$. Since
all cycles have length $2$, these are also the only semiprimitive
partitions and all points have type $1$. 
As a consequence of Theorem~\ref{thm:reduced}, the generating function
of all reduced partitions of genus one is
$$
R(x,y)=R_{\overline{\beta_1}}(x,y)+R_{\overline{\beta_2}}(x,y)=
x^{4} y^{2}\cdot 
\frac{4}{4\cdot (1-x^2y)^{3}}
+
x^{6} y^{3}\cdot 
\frac{6}{6\cdot (1-x^2y)^{4}}.
$$
\begin{corollary}
\label{cor:pg1red}
The generating function $R(x,y)$ of reduced partitions of genus one is 
$$
R(x,y)=\frac{x^4y^2}{(1-x^2y)^4}=r_{4,2,4}(x,y).
$$
\end{corollary}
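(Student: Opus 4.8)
The plan is to start directly from the expression for $R(x,y)$ displayed immediately above the statement, which is obtained by summing the two average contributions $R_{\overline{\beta_1}}(x,y)$ and $R_{\overline{\beta_2}}(x,y)$ furnished by Theorem~\ref{thm:reduced}. Since both $\beta_1=(1,3)(2,4)$ and $\beta_2=(1,4)(2,5)(3,6)$ consist entirely of $2$-cycles and are primitive, every point has type $1$ and the two (respectively three) $2$-cycles lie in distinct parallel classes; hence in Theorem~\ref{thm:reduced} we have $m_0=m_2=0$, $m_1=m$, and the numerator $m_0(1-x^2y)+m_1+m_2(1+x^2y)$ collapses to $m$. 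This is exactly why the prefactors $4/4$ and $6/6$ appear in the displayed formula.

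First I would cancel these prefactors, reading off $R_{\overline{\beta_1}}(x,y)=x^4y^2/(1-x^2y)^3$ (here $m=4$, $c=2$, $p=2$) and $R_{\overline{\beta_2}}(x,y)=x^6y^3/(1-x^2y)^4$ (here $m=6$, $c=3$, $p=3$). Next I would place both fractions over the common denominator $(1-x^2y)^4$ and add the numerators, the one computation worth recording being
\[
R(x,y)=\frac{x^4y^2(1-x^2y)+x^6y^3}{(1-x^2y)^4}=\frac{x^4y^2-x^6y^3+x^6y^3}{(1-x^2y)^4}=\frac{x^4y^2}{(1-x^2y)^4}.
\]
The term $-x^6y^3$ produced by expanding $x^4y^2(1-x^2y)$ cancels exactly against the contribution of $\beta_2$, leaving a single monomial in the numerator. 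Comparing with the definition \eqref{eq:rijk} of $r_{i_1,i_2,i_3}(x,y)$, this is precisely $r_{4,2,4}(x,y)$, as claimed.

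There is essentially no obstacle here: the argument is pure fraction arithmetic, and the only thing to watch is the bookkeeping of the common denominator. The noteworthy point—rather than a difficulty—is the clean cancellation of the two $x^6y^3$ terms, which is what makes the genus one generating function collapse into a single term of the form \eqref{eq:rijk} and lets us reproduce the main result of~\cite{Cori-Hetyei} in a few lines.
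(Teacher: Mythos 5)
Your proposal is correct and follows exactly the paper's route: the paper obtains the same two contributions $x^4y^2/(1-x^2y)^3$ and $x^6y^3/(1-x^2y)^4$ from Theorem~\ref{thm:reduced} (with $m_1=m$, $p=m/2$ since all cycles are $2$-cycles) and the corollary is just the fraction addition you carried out, where the $x^6y^3$ terms cancel. The only difference is that the paper leaves this arithmetic implicit, while you have written it out; nothing is missing.
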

Corollary~\ref{cor:pg1red}, combined with the first line of
Proposition~\ref{prop:pijk}, immediately yields the following result,
first shown in~\cite{Cori-Hetyei}. 

\begin{theorem}
\label{thm:pnkgf}
The generating function $P(x,y)$ of all partitions of genus one is given
by 
$$
P(x,y)=\frac{x^4y^2}{(1-2(1+y)x+x^2(1-y)^2)^{5/2}}.
$$
\end{theorem}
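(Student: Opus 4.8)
The plan is to combine Corollary~\ref{cor:pg1red} with the machinery already developed in Sections~\ref{sec:pcount} and~\ref{sec:extract}. By Corollary~\ref{cor:pg1red} the generating function of reduced genus one partitions is the single term $R(x,y)=r_{4,2,4}(x,y)$, so the indices are $i_1=4$, $i_2=2$, $i_3=4$. Since $i_1=i_3$, I am squarely in the first case of Proposition~\ref{prop:pijk}, and I would invoke it directly to compute the corresponding $p_{i_1,i_2,i_3}(x,y)$.

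First I would recall that Theorem~\ref{thm:fromreduced} tells us that $P(x,y)$ is obtained from $R(x,y)$ by replacing each occurrence of $x$ with $(D(x,y)-1)/y$ and then multiplying by $\Delta(x,y)^{-1}$, where $\Delta(x,y)=\sqrt{(x+xy-1)^2-4x^2y}$. This substitution is exactly what turns $r_{i_1,i_2,i_3}(x,y)$ into $p_{i_1,i_2,i_3}(x,y)$ as defined in~\eqref{eq:pijk}. Applying this to $R(x,y)=r_{4,2,4}(x,y)$ gives $P(x,y)=p_{4,2,4}(x,y)$.

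Next I would simply read off the value from the first line of Proposition~\ref{prop:pijk}. With $i_1=i_3=4$ and $i_2=2$, that line gives
$$
p_{4,2,4}(x,y)=\frac{x^{4}y^{2}}{\Delta(x,y)^{5}}.
$$
Finally, unwinding the definition of $\Delta(x,y)$, note that $\Delta(x,y)^2=(x+xy-1)^2-4x^2y$, and expanding this quadratic yields $(x+xy-1)^2-4x^2y=1-2x(1+y)+x^2(1-y)^2$. Substituting this into the denominator gives precisely the claimed formula
$$
P(x,y)=\frac{x^4y^2}{(1-2(1+y)x+x^2(1-y)^2)^{5/2}}.
$$

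The proof is essentially a direct citation of two earlier results followed by a short algebraic simplification, so I expect no serious obstacle. The only point requiring a moment of care is verifying the identity $(x+xy-1)^2-4x^2y=1-2x(1+y)+x^2(1-y)^2$, i.e.\ matching the two forms of the discriminant so that the denominator exponent $5$ on $\Delta$ becomes the exponent $5/2$ on the expanded quadratic; this is a routine expansion of a square.
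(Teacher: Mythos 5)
Your proposal is correct and follows exactly the paper's own argument: the paper proves this theorem by combining Corollary~\ref{cor:pg1red} with the first line of Proposition~\ref{prop:pijk}, which is precisely your route, including the identification $R(x,y)=r_{4,2,4}(x,y)$ and the final expansion $\Delta(x,y)^2=(x+xy-1)^2-4x^2y=1-2x(1+y)+x^2(1-y)^2$. No gaps; the algebraic verification you flag is indeed routine and checks out.
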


\section{Genus $2$ partitions}
\label{sec:gen2}

For genus $2$ partitions Theorem~\ref{thm:finite} gives that any
primitive partition is a partition on at most $6(2\cdot 2-1)=18$
elements. First we have a closer look at the possible cycle lengths of
these partitions.

\begin{theorem}
For a primitive partition $\beta$ of genus $2$ on $n$ elements, one of the
following holds:
\begin{enumerate}
\item $\beta$ has only $2$-cycles and $n\leq 18$;  
\item $\beta$ has one $3$-cycle, all other cycles are $2$-cycles and
  $n\leq 15$;
\item $\beta$ has two $3$-cycles, all other cycles are $2$-cycles and
  $n\leq 12$;
\item $\beta$ has one $4$-cycle, all other cycles are $2$-cycles and
  $n\leq 12$.
\end{enumerate} 
\end{theorem}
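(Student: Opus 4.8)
The plan is to reduce the entire statement to a single inequality about the cycle type of $\beta$. Write $a_j$ for the number of $j$-cycles of $\beta$ and set $D := n - 2z(\beta) = \sum_{j\ge 3}(j-2)\,a_j$. Since $\beta$ is reduced it has no fixed points, so $D\ge 0$, and one checks directly that $D=0$ holds exactly when every cycle of $\beta$ has length $2$; $D=1$ exactly when $\beta$ has a single $3$-cycle and all other cycles have length $2$; and $D=2$ exactly when $\beta$ has either two $3$-cycles or one $4$-cycle, with all remaining cycles of length $2$. Thus the four listed cycle types are precisely the cases $D\in\{0,1,2\}$, and the theorem amounts to proving $D\le 2$ together with the displayed bounds on $n$.

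The bounds on $n$ come from a cycle count. Because $\beta$ is reduced and primitive, the permutation $\beta^{-1}\zeta_n$ has no fixed points and no $2$-cycles, so all its cycles have length at least $3$; writing $b_j$ for its number of $j$-cycles and $E := n - 3z(\beta^{-1}\zeta_n) = \sum_{j\ge 3}(j-3)\,b_j \ge 0$, I would substitute $z(\beta)=(n-D)/2$ and $z(\beta^{-1}\zeta_n)=(n-E)/3$ into the genus relation $z(\beta)+z(\beta^{-1}\zeta_n)=n-3$ coming from \eqref{eq:permgenus}. Clearing denominators gives the identity
\[
3D+2E = 18-n .
\]
Since $E\ge 0$ this yields $n\le 18-3D$, which is exactly $n\le 18$, $n\le 15$, $n\le 12$ for $D=0,1,2$ respectively.

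The crux is the inequality $D\le 2$, and for this I would invoke Corollary~\ref{cor:backp}: the permutation $\beta^{-1}\zeta_n$ has exactly $2g=4$ back points. The elementary ingredient is that a cycle of length $\ell$ contributes at most $\ell-2$ back points. Indeed, within one cycle the number of back points equals the number of cyclic descents minus one, since the descent landing on the cycle minimum is never a back point, and a cyclic arrangement of $\ell$ distinct integers has at most $\ell-1$ descents (the step into the cycle maximum is an ascent). Summing this estimate over the cycles of $\beta^{-1}\zeta_n$ gives
\[
4 \le \sum_{\text{cycles }c}(\ell_c-2) = n - 2z(\beta^{-1}\zeta_n),
\]
so $z(\beta^{-1}\zeta_n)\le (n-4)/2$. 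Feeding this into $z(\beta)=n-3-z(\beta^{-1}\zeta_n)$ gives $z(\beta)\ge (n-2)/2$, that is $D = n-2z(\beta)\le 2$.

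Putting the pieces together, $D\le 2$ forces one of the three cycle-type cases described in the first paragraph, and $n\le 18-3D$ supplies the matching bound on $n$. The step that I expect to need the most care is the back-point estimate: I must make sure the bound $\ell-2$ is applied only to cycles of length $\ell\ge 2$ (which is guaranteed here, as $\beta^{-1}\zeta_n$ has all cycles of length $\ge 3$), and that Corollary~\ref{cor:backp} is read as a statement about $\beta^{-1}\zeta_n$, whose $2g$ back points are exactly what couples the cycle structure of $\beta^{-1}\zeta_n$ back to that of $\beta$. The remaining manipulations are routine bookkeeping with the two nonnegative excess statistics $D$ and $E$.
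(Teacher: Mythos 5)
Your proof is correct, but it takes a genuinely different route from the paper's. The paper argues by a contradiction-driven case analysis: it first shows $\beta$ can have at most two cycles of length at least $3$, then splits into cases according to whether there are two such cycles, one, or none, bounding the cycle lengths in each case via $z(\beta^{-1}\zeta_n)\le n/3$ together with ad hoc back-point counts (e.g.\ ruling out the possibility that $\beta^{-1}\zeta_n$ consists of three $3$-cycles, since these carry at most $3$ back points while Corollary~\ref{cor:backp} demands $4$). You instead package the whole statement into the two excess statistics $D=n-2z(\beta)$ and $E=n-3z(\beta^{-1}\zeta_n)$, observe that the genus relation becomes the exact identity $3D+2E=18-n$, and reduce everything to the single inequality $D\le 2$, which you obtain from a uniform per-cycle bound: every cycle of length $\ell\ge 2$ carries at most $\ell-2$ back points. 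Your descent argument for that bound is sound (the step into the cycle maximum is an ascent, the descent onto the cycle minimum is never a back point), and you correctly flag that it must only be applied to cycles of length $\ell\ge 2$, which is guaranteed here. The paper in effect uses only the special cases $\ell\in\{3,4\}$ of this bound, buried inside its subcases. What your approach buys: all four bounds $n\le 18-3D$ fall out of one identity rather than separate computations; the argument visibly generalizes to arbitrary genus (the same reasoning gives $D\le 2g-2$ and $n\le 6(2g-1)-3D$, recovering the bound of Theorem~\ref{thm:finite} as the $D=0$ case); and it avoids the nested contradiction bookkeeping of the paper. What the paper's route buys is mainly expository continuity: its case structure mirrors the four columns of the subsequent computer search recorded in Table~\ref{tab:primitives}.
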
  
\begin{proof}
Observe that for $g=2$ Equation (\ref{eq:permgenus}) gives
\begin{equation}
\label{eq:pg2}
n-3=z(\beta)+z(\beta^{-1}\zeta_n).  
\end{equation}
Note also that the primitivity of $\beta$ implies
\begin{equation}
\label{eq:zdual}  
z(\beta^{-1}\zeta_n)\leq \frac{n}{3}
\end{equation}

Assume first, by way of
contradiction, that $\beta$ has at least three cycles whose length is at
least $3$. Let $c_1,c_2$ and $c_3$ be the length of three such cycles. 
In this case
$$z(\beta)\leq 2+ \frac{n-c_1-c_2-c_3}{2}.$$
Combining this with (\ref{eq:pg2}) and (\ref{eq:zdual}) we obtain 
$$
n-3\leq 2+ \frac{n-c_1-c_2-c_3}{2} + \frac{n}{3}, \quad\mbox{that is,}
$$
$$
n\leq 30-3(c_1+c_2+c_3).
$$
Since $c_1+c_2+c_3\geq 9$, the above inequality yields $n\leq 3$, in
contradiction with $n\geq c_1+c_2+c_3\geq 9$. Therefore there are at most
two cycles in $\beta$ that are not involutions. 

{\bf\noindent Case 1:} $\beta$ has  exactly two cycles that are not
$2$-cycles. Let the length of these cycles be $c_1$ and $c_2$. In this
case 
$$z(\beta)= 2+ \frac{n-c_1-c_2}{2}.$$
Combining this with (\ref{eq:pg2}) and (\ref{eq:zdual}) yields
$$
n-3\leq 2+ \frac{n-c_1-c_2}{2} + \frac{n}{3}, \quad\mbox{that is,}
$$
\begin{equation}
\label{eq:c1c2}  
  n\leq 30-3(c_1+c_2).
\end{equation}  
Assume, by way of contradiction, that at least one of $c_1$ and $c_2$ is
greater than $3$. Then $c_1+c_2\geq 7$, and above inequality gives
$n\leq 9$. In that case, $\beta^{-1}\zeta_n$ has at most $3$ cycles, but
it can not have exactly $3$, as three $3$-cycles have at most $3$ back
points, in contradiction with Corollary~\ref{cor:backp} which requires at
least $4$ back points. Thus we must have $z(\beta^{-1}\zeta_n)\leq 2$
and  (\ref{eq:pg2}) yields 
$$
n-3\leq 2+ \frac{n-c_1-c_2}{2}+2, \quad\mbox{implying}\quad
n\leq 14-(c_1+c_2)\leq 7.
$$
Now, if $z(\beta^{-1}\zeta_n)=2$ then $\beta^{-1}\zeta_n$ has one three
cycle and one cycle of length at most four, with at most $3$ back points,
which is impossible. Thus we must have $z(\beta^{-1}\zeta_n)=1$ and
(\ref{eq:pg2}) yields 
$$
n-3\leq 2+ \frac{n-c_1-c_2}{2}+1, \quad\mbox{implying}\quad
n\leq 14-(c_1+c_2)\leq 5.
$$
This contradicts $n\geq c_1+c_2\geq 7$. We obtained that in this 
case $\beta$ has  exactly $2$ cycles of length $3$ and (\ref{eq:c1c2})
yields $n\leq 12$. 

{\bf\noindent Case 2:} $\beta$ has exactly one cycle whose length is
greater than $2$. Let $c_1$ be the length of this cycle. In this case we have
$$
z(\beta)=1+\frac{n-c_1}{2},
$$
and $c_1$ has the same parity as $n$. 
Equation (\ref{eq:pg2})
yields
$$
n-3\leq 1+\frac{n-c_1}{2}+\frac{n}{3}\quad\mbox{implying}
$$
\begin{equation}
\label{eq:c1}  
n\leq 24-3c_1.
\end{equation}
Assume, by way of contradiction that $c_1\geq 5$ holds. In this case
(\ref{eq:c1}) yields $n\leq 9$. Just like in the previous case,
$\beta^{-1}\zeta_n$ can not have three $3$-cycles with altogether at
most $3$ back points, thus we must have $z(\beta^{-1}\zeta_n)\leq 2$. 
Equation (\ref{eq:pg2})
yields
$$
n-3\leq 1+\frac{n-c_1}{2}+2\quad\mbox{implying}
$$
$$
n\leq 12-c_1
$$
Since $c_1\geq 5$, we obtain $n\leq 7$. Just like in the previous case
$\beta^{-1}\zeta_n$ can not have one $3$-cycle and one cycle of length
at most $4$, as these could contain at most $3$ back points. We are
left with $z(\beta^{-1}\zeta_n)=1$ and  Equation (\ref{eq:pg2})
yields
$$
n-3\leq 1+\frac{n-c_1}{2}+1\quad\mbox{implying}
$$
$$
n\leq 10-c_1
$$
Since $c_1\geq 5$, we obtain $n\leq 5$. This, together with $n\geq c_1$
forces $n=c_1=5$ and $\beta=(12345)$, a partition of genus $0$, not $2$. 
This contradiction proves that we can only have $c_1=3$ or $c_1=4.$
For $c=3$ (\ref{eq:c1}) Equation gives $n\leq 15$, for $c=3$ Equation
(\ref{eq:c1}) gives $n\leq 12$. 

{\bf\noindent Case 3:} All cycles of $\beta$ are $2$-cycles. In this
case Theorem~\ref{thm:finite} implies $n\leq 18$.
\end{proof}   

We have found all primitive partitions using a computer search. They are
shown in Table~\ref{tab:primitives}. The last line indicates the number
of parts as a function of $n$.

\begin{table}[h]
\begin{tabular}{r||c|c|c|c|}
$n$ & Transpositions only & one $3$-cycle & two $3$-cycles & one $4$-cycle\\ 
  \hline
  \hline
  $6$  & $0$   & $0$   & $1$  & $0$\\
  $7$  & $0$   & $14$  & $0$  & $0$\\
  $8$  & $21$  & $0$   & $20$ & $6$\\
  $9$  & $0$   & $141$ & $0$  & $0$\\
  $10$ & $168$ & $0$   & $65$ & $15$\\
  $11$ & $0$   & $407$ & $0$  & $0$\\
  $12$ & $483$ & $0$   & $52$ & $9$\\
  $13$ & $0$   & $455$ & $0$  & $0$\\
  $14$ & $651$ & $0$   & $0$  & $0$\\
  $15$ & $0$   & $0$   & $0$  & $0$\\
  $16$ & $420$ & $0$   & $0$  & $0$\\
  $17$ & $0$   & $0$   & $0$  & $0$\\
  $18$ & $105$ & $0$   & $0$  & $0$\\   
\end{tabular} 
\caption{Numbers of primitive partitions of genus $2$ }
\label{tab:primitives}  
\end{table}  

\subsection{Reduced matchings of genus $2$}

If a primitive partition contains only $2$-cycles then all points have
type $1$ and the number of parallel classes is the same as the number of
$2$-cycles, that is $n/2$. By Theorem~\ref{thm:reduced} the generating
function of all reduced partitions associated to such primitive
partitions is
\begin{align*}
R_M(x,y)&=21 \cdot x^8y^4 \cdot \frac{1}{(1-x^2y)^5}
 +168 \cdot x^{10}y^5    \cdot \frac{1}{(1-x^2y)^6}
 +483 \cdot x^{12}y^{6}  \cdot \frac{1}{(1-x^2y)^7}\\
 &+651 \cdot x^{14}y^7    \cdot \frac{1}{(1-x^2y)^8}
 +420 \cdot x^{16}y^8    \cdot \frac{1}{(1-x^2y)^9}
 +105 \cdot x^{18}y^9    \cdot \frac{1}{(1-x^2y)^{10}}.\\
\end{align*}
This expression may be simplified to

\begin{equation}
\label{eq:redm}
R_M(x,y)=21\cdot (x^2y)^4\cdot \frac{1+3\cdot x^2y+(x^2y)^2}{(1-x^2y)^{10}}.  
\end{equation}  
Note that this is the generating function of all {\em reduced matchings
  of genus $2$}, that is, the generating function of all reduced
partitions of genus $2$ in which all parts have size $2$. All partitions
of genus $2$ with this property have already been counted and this gives
us a means to verify the numbers we found with computer in this case.

In analogy to Theorem~\ref{thm:fromreduced}, it is not hard to show
that the generating function $\sum_{n\geq 0} m_2(n)\cdot t^n$ for all
matchings of genus $2$ with $n$ edges is 
$R_M(C(t),t)$, where
$$
C(t)=\frac{1-\sqrt{1-4t}}{2t}
$$
is a generating function of the Catalan numbers (the coefficient of
$t^n$ counts the number of noncrossing partitions of $2n$ such that each
part has two elements). Evaluating the Taylor series of $R_M(C(t),t)$
gives
$$
R_M(C(t),t)= t^4+483\cdot t^5+6468\cdot t^6+66066\cdot t^7+570570\cdot
t^8+4390386\cdot t^9+31039008\cdot t^{10}+\cdots  
$$
The coefficients are listed as sequence A006298 in \cite{OEIS}, as
counting ``genus $2$ rooted maps with $1$ face with n points'', the
main reference being the work of Walsh and Lehman~\cite{Walsh-Lehman}.  

\subsection{The contribution of the other primitive partitions}

If a primitive partition of genus $2$ of $\{1,\ldots,n\}$ contains only
one $3$-cycle then $3$ points have type $2$ and $n-3$ points have
type $1$. The number of parallel classes is $3+(n-3)/2=(n+3)/2$. 
By Theorem~\ref{thm:reduced} the generating
function of all reduced partitions associated to such primitive
partitions is
\begin{align*}
R_{\triangle} (x,y)&=
14\cdot x^7y^3\cdot \frac{4+3\cdot (1+x^2y)}{7\cdot (1-x^2y)^6}+
141\cdot x^9y^4\cdot \frac{6+3\cdot (1+x^2y)}{9\cdot (1-x^2y)^7}\\
&+407\cdot x^{11}y^5\cdot \frac{8+3\cdot (1+x^2y)}{11\cdot (1-x^2y)^8}+
455\cdot x^{13}y^6\cdot \frac{10+3\cdot (1+x^2y)}{13\cdot (1-x^2y)^9}.
\end{align*}
This expression may be simplified to
\begin{equation}
\label{eq:onet}  
R_{\triangle} (x,y)=\frac{7\cdot x^7y^3(2+13 x^2y+ 13
  (x^2y)^2+2(x^2y)^3)}{(1-x^2y)^{10}}. 
\end{equation}

If a primitive partition of genus $2$ of $\{1,\ldots,n\}$ contains two
$3$-cycles then $6$ points have type $2$ and $n-6$ points have
type $1$. The number of parallel classes is $6+(n-6)/2=(n+6)/2$. 
By Theorem~\ref{thm:reduced} the generating
function of all reduced partitions associated to such primitive
partitions is
\begin{align*}
R_{\davidsstar}(x,y)&=
x^6y^2\cdot \frac{6(1+x^2y)}{6(1-x^2y)^7}
+20\cdot x^8y^3\cdot \frac{2+6(1+x^2y)}{8(1-x^2y)^8}\\
&+65\cdot x^{10}y^4\cdot \frac{4+6(1+x^2y)}{10(1-x^2y)^9}
+52\cdot x^{12}y^5\cdot \frac{6+6(1+x^2y)}{12(1-x^2y)^{10}}
\end{align*}
This expression may be simplified to
\begin{equation}
\label{eq:twot}  
R_{\davidsstar} (x,y)=\frac{x^6y^2(1+18 x^2y+55
  (x^2y)^2+30(x^2y)^3+(x^2y)^4)}{(1-x^2y)^{10}}. 
\end{equation}

Finally, if a primitive partition of genus $2$ of $\{1,\ldots,n\}$
contains one $4$-cycle then $4$ points have type $2$ and $n-4$ points have
type $1$. The number of parallel classes is $4+(n-4)/2=(n+8)/2$. 
By Theorem~\ref{thm:reduced} the generating
function of all reduced partitions associated to such primitive
partitions is
$$
R_{\Box}(x,y)=
6\cdot  x^8y^3 \cdot\frac{4+4(1+x^2y)}{8(1-x^2y)^7}
+15\cdot  x^{10}y^4 \cdot\frac{6+4(1+x^2y)}{10(1-x^2y)^8}
+9\cdot  x^{12}y^4 \cdot\frac{8+4(1+x^2y)}{12(1-x^2y)^9}.
$$
This expression may be simplified to 
\begin{equation}
\label{eq:oneb}  
R_{\Box} (x,y)=\frac{6 x^8y^3(1+x^2y)}{(1-x^2y)^{9}}.
\end{equation}

\subsection{Semiprimitive partitions that are not primitive}

A semiprimitive partition that is not primitive has at least one
parallel pair of directed edges whose removal leads to a cycle of length
at least $4$. Continuing the removal of parallel pairs of directed
edges, the primitive partition reached must also have a cycle of length
at least $4$. In genus $2$ this is only possible when the primitive
partition has a single $4$-cycle, and each semiprimitive partition
arises from cutting the unique $4$-cycle into two $3$-cycles. An example
of such a semiprimitive partition is shown in
Figure~\ref{fig:semiprimitive}, removing its pair of parallel edges
results in the primitive partition shown in Figure~\ref{fig:primitive}.   

When counting semiprimitive partitions, it is important to the number of
equivalent partitions modulo cyclic relabeling is usually different for
the semiprimitive partition and the associated primitive partition.
For example, the primitive partition shown in Figure~\ref{fig:primitive}
has $4$ distinct partitions in its equivalence class
($\zeta_8^4\alpha_2\zeta_8^{-4}=\alpha_2$), whereas the semiprimitive
partition shown in Figure~\ref{fig:semiprimitive} has $5$ distinct
partitions in its equivalence class
($\zeta_{10}^5\alpha_3\zeta_{10}^{-5}=\alpha_3$). Writing a program that
searches for all semiprimitive partitions of genus $2$ that are not
primitive is more complicated then identifying primitive partitions,
luckily the semiprimitive partitions can also be identified ``by
hand''. We spare the reader the details of the tedious work, we
summarize our findings in Table~\ref{tab:semiprimitive}. 

\begin{table}[h]
\begin{center}
\begin{tabular}{|l|c|l|c|l|}
\hline
$n$ & Primitive partition & cyclic & Semiprimitive
  partition & cyclic\\
& & copies & & copies\\
  \hline
  \hline
$10$ & $(1,3,5,7) (2,8) (4,6)$ & $4$ & $(1,3,5)(6,8,10)(2,9)(4,7)$ &
  $5$\\
     &                         &     & $(1,3,9)(4,6,8)(2,10)(5,7)$ &
  $5$\\
     & $(1,3,5,7) (2,6) (4,8)$ & $2$ & $(1,3,5)(6,8,10)(2,7)(4,9)$ &
  $5$\\
  \hline
$12$ & $(1,4,7,9) (2,5) (3,6)(8,10)$ & $10$ &
  $(1,4,7)(8,10,12)(2,5)(3,6)(9,11)$ & 
  $12$\\  
    &                                &     &
  $(1,4,11)(5,8,10)(2,6)(3,7)(9,12)$ & 
  $12$\\
    & $(1,4,6,9) (2,7) (3,8)(5,10)$ & $5$ &
  $(1,4,6)(7,10,12)(2,8)(3,9)(5,11)$ & 
  $6$\\
     &                               &    &
  $(1,3,6)(7,9,12)(2,8)(4,10)(5,11)$ & 
  $6$\\
\hline  
$14$ & $(1,4,7,10) (2,5) (3,6)(8,11) (9,12)$ & $6$ &
  $(1,4,7)(8,11,14)(2,5)(3,6)(9,12)(10,14)$ & 
  $7$\\
     &  &  &
  $(1,4,12)(5,8,11)(2,6)(3,7)(9,13)(10,14)$ & 
  $7$\\
     & $(1,4,7,10) (2,8) (3,9)(5,11) (6,12)$ & $3$ &
  $(1,4,7)(8,11,14)(2,9)(3,10)(5,12) (6,13)$ & 
$7$\\
\hline
\end{tabular}
\end{center}
\caption{Semiprimitive partitions of genus $2$}
\label{tab:semiprimitive}
\end{table}  

The number $n$
refers to the size of the underlying set of the semiprimitive
partition. The underlying set of the associated primitive partition has
two less elements. We listed one representative from each cyclic relabeling
equivalence class, and indicated the number of equivalent
(semi-)primitive partitions up to cyclic relabelings. On the left hand
side we see that there are $4+2=6$ primitive partitions containing a
$4$-cycle on $8$ elements, $10+5=15$ on $10$ elements and $6+3=9$ on
$12$ elements. In most cases we get $2$ inequivalent semiprimitive
partitions by cutting the $4$-cycle of a primitive partition along one
of its diagonals, in the case when only one semiprimitive partition is
indicated, cutting along the other diagonal yields an equivalent
partition up to cyclic relabeling. By adding up the numbers associated
to the same value of $n$ in the last column, we see that there are $15$
semiprimitive (but not primitive) partitions on $10$ elements, $36$ such
partitions on $12$ elements and $21$ such partitions on $14$ elements.

For such a semiprimitive partition on $n$ elements there are $4$
points of type $2$ and $n-4$ points of type $1$. The number of
parallel classes is $5+(n-6)/2=(n+4)/2$. By Theorem~\ref{thm:reduced}
the generating function of all reduced partitions associated to such
semiprimitive partitions is
\begin{align*}
R_{\boxbslash}(x,y)=&
15\cdot x^{10}y^4 \cdot \frac{6+4(1+x^2y)}{10(1-x^2y)^8}
+36\cdot x^{12}y^5 \cdot \frac{8+4(1+x^2y)}{12(1-x^2y)^9}\\
&+21\cdot x^{14}y^6 \cdot \frac{10+4(1+x^2y)}{14(1-x^2y)^{10}}
\end{align*}
This expression may be simplified to
\begin{equation}
  \label{eq:sp}
R_{\boxbslash}(x,y)=3 x^{10} y^4\cdot \frac{5+4x^2y}{(1-x^2y)^{10}}.  
\end{equation}  

\subsection{Adding up the contributions}

Adding up the generating functions given in (\ref{eq:redm}),
(\ref{eq:onet}), (\ref{eq:twot}), (\ref{eq:oneb}) and (\ref{eq:sp}) we
obtain the following result.

\begin{theorem}
\label{thm:reducedg2}
The generating function $R(x,y)=\sum_{n,k} r(n,k) x^ny^k$ of the numbers
$r(n,k)$ of all reduced genus $2$ partitions of $n$ elements with $k$
parts is given by 
$$
R(x,y)=\frac{x^6y^2\cdot r(x,y)}{(1-x^2 y)^{10}}
$$
Here
\begin{align*}
  r(x,y)&= 1+14 xy+24 x^2 y+21 x^2y^2+91 x^3 y^2\\
  &+55 x^4 y^2+63 x^4y^3\\
  &+91 x^5 y^3+21x^6 y^4+24 x^6 y^3+14 x^7 y^4+x^8 y^4.\\ 
\end{align*}  
\end{theorem}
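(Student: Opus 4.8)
The plan is to obtain $R(x,y)$ as the sum of the five generating functions already computed in the preceding subsections and then to simplify. The decomposition is justified by completeness of the classification. By (\ref{eq:ravgb}) we have $R(x,y)=\sum_\beta R_{\overline\beta}(x,y)$, the sum ranging over all semiprimitive partitions $\beta$ of genus $2$. Every such $\beta$ is either primitive or semiprimitive but not primitive. The cycle-length theorem shows that a primitive $\beta$ has either only $2$-cycles, exactly one $3$-cycle, exactly two $3$-cycles, or exactly one $4$-cycle; and the analysis in the ``Semiprimitive partitions that are not primitive'' subsection shows that a non-primitive semiprimitive $\beta$ of genus $2$ must arise by cutting the unique $4$-cycle of a one-$4$-cycle primitive partition into two $3$-cycles. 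These five disjoint families exhaust the semiprimitive partitions of genus $2$, and summing $R_{\overline\beta}$ over each family gives precisely $R_M$, $R_\triangle$, $R_{\davidsstar}$, $R_\Box$, and $R_{\boxbslash}$ from (\ref{eq:redm}), (\ref{eq:onet}), (\ref{eq:twot}), (\ref{eq:oneb}), and (\ref{eq:sp}). Hence $R(x,y)=R_M+R_\triangle+R_{\davidsstar}+R_\Box+R_{\boxbslash}$.

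First I would place all five summands over the common denominator $(1-x^2y)^{10}$. Four of them already carry this denominator; only $R_\Box$ in (\ref{eq:oneb}) has the lower power $(1-x^2y)^9$, so I would rewrite it as $6x^8y^3(1+x^2y)(1-x^2y)/(1-x^2y)^{10}=(6x^8y^3-6x^{12}y^5)/(1-x^2y)^{10}$. The problem then reduces to summing five explicit polynomial numerators.

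Next I would collect the combined numerator monomial by monomial, adding across the five contributions the coefficient of each $x^ay^b$ (these range from $x^6y^2$ up to $x^{14}y^6$), and then factor out $x^6y^2$. Matching the resulting polynomial against $x^6y^2\cdot r(x,y)$, with $r(x,y)$ as stated, completes the proof.

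The sole real obstacle is bookkeeping precision rather than any conceptual difficulty: several families feed into the same intermediate monomials---for example both the two-$3$-cycle primitive family and the non-primitive semiprimitive family contribute to the coefficients of $x^{10}y^4$ and of $x^{12}y^5$---so a single transcription error in a coefficient of (\ref{eq:twot}) or (\ref{eq:sp}) would corrupt exactly those entries of $r(x,y)$. To guard against this I would verify the sum by an independent expansion of $x^6y^2\,r(x,y)$ and confirm term-by-term agreement with the collected numerator, rather than relying on a single pass of the addition.
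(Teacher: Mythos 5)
Your plan is, in outline, identical to the paper's proof, which consists of the single sentence that the result follows by ``adding up the generating functions given in (\ref{eq:redm}), (\ref{eq:onet}), (\ref{eq:twot}), (\ref{eq:oneb}) and (\ref{eq:sp}).'' But the concrete step on which your whole argument rests --- collecting the five printed numerators over $(1-x^2y)^{10}$ and matching the result against $x^6y^2\, r(x,y)$ --- fails as stated. Writing $u=x^2y$: the literal sum of the five equations has numerator coefficient $55+15=70$ at $x^{10}y^4$ and $30-6+12=36$ at $x^{12}y^5$, while the theorem asserts $55$ and $24$; the excess is precisely the numerator $15x^{10}y^4+12x^{12}y^5$ of (\ref{eq:sp}). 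The source of the inconsistency is (\ref{eq:twot}): it is not a correct simplification of the four-term sum displayed just above it, which in fact equals
\begin{equation*}
\frac{x^6y^2\left(1+18u+40u^2+18u^3+u^4\right)}{(1-u)^{10}},
\end{equation*}
whereas the printed (\ref{eq:twot}) equals this expression plus $R_{\boxbslash}(x,y)$. Consequently, one must either correct (\ref{eq:twot}) and then add all five equations, or keep (\ref{eq:twot}) as printed and omit (\ref{eq:sp}) so that the semiprimitive contribution is not counted twice. Your proposal, which takes all five printed equations at face value, has no mechanism for deciding which of them is at fault.

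The theorem itself is correct, and this can be checked independently of the misprint: summing the contributions $R_{\overline{\beta}}$ of Theorem~\ref{thm:reduced} over all semiprimitive families gives, for instance,
\begin{equation*}
[x^{10}y^4]\, R(x,y) = 35+175+65+45+15+15 = 350
\end{equation*}
(the two-$3$-cycle primitives on $6$, $8$, $10$ points, the $4$-cycle primitives on $8$, $10$ points, and the semiprimitives on $10$ points), which agrees with the theorem's $55+240+55=350$ but not with the $365$ your five-fold sum would produce. So your own verification safeguard would detect a mismatch exactly at the two coefficients you flagged as delicate, and the proof would stall there; completing it requires going back to the unsimplified displayed sums (equivalently, to Theorem~\ref{thm:reduced} together with the tables of primitive and semiprimitive partitions) and recomputing the two-$3$-cycle contribution rather than quoting (\ref{eq:twot}). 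The paper's own one-line proof suffers from the same defect, so this is a gap you inherit from the paper rather than introduce, but as written your argument does not close it.
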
  

Using the method described in Section~\ref{sec:extract}, we computed a
closed form formula for the generating function $P(x,y)$ of all genus
$2$ partitions. It seems possible to perform this calculation by hand,
but the use of a computer algebra system can greatly reduce this lengthy
procedure. We relied on the help of Maple, and obtained the following
formula for the generating function $P(x,y)=\sum_{n,k} p(n,k)\cdot
x^ny^k$ of the numbers $p(n,k)$ of all genus $2$ partitions of $n$
elements with $k$: 

\begin{align*}
P(x,y) &=
\frac{x^{10}}{8}\cdot \frac{57 y^6-40 y^5- 90 y^4+72 y^3+ y^2}{\Delta(x,y)^{11}}
+\frac{x^9}{2}
\cdot \frac{-22 y^5+ 131 y^4-30 y^3-y^2}{\Delta(x,y)^{11}}\\
&+\frac{3x^8}{4}\cdot (y^4+4 y^3+y^2)\cdot 
\left(\frac{1}{\Delta(x,y)^9}+\frac{1}{\Delta(x,y)^{11}}\right)\\
&+\frac{x^7}{2}\cdot (6 y^3- y^2)\cdot
\left(\frac{3}{\Delta(x,y)^9}+\frac{1}{\Delta(x,y)^{11}}\right)\\
&+ \frac{x^6y^2}{8}\cdot
\left(\frac{1}{\Delta(x,y)^7} +\frac{6}{\Delta(x,y)^9}
+\frac{1}{\Delta(x,y)^{11}}\right) 
\end{align*}  
Here $\Delta(x,y)$ is the algebraic expression given in (\ref{eq:discr}).
Using the fact that $\Delta(x,y)^2=(-1+x+xy)^2-4x^2y$ we may replace
$1/\Delta(x,y)^9$ with $((-1+x+xy)^2-4x^2y)/\Delta(x,y)^{11}$ and
$1/\Delta(x,y)^7$ with $((-1+x+xy)^2-4x^2y)^2/\Delta(x,y)^{11}$ in the
formula above. Thus we obtain the following result.

\begin{theorem}
\label{thm:genus2gf}
The generating function $P(x,y)=\sum_{n,k} p(n,k)\cdot  x^ny^k$ of the numbers
$p(n,k)$ of all genus $2$ partitions of $n$ elements with $k$
parts is given by
$$
P(x,y)=\frac{x^6y^2\cdot p(x,y)}{(x^2-2x^2y-2x+x^2y^2-2xy+1)^{11/2}} 
$$
Here 
\begin{align*}
  p(x,y)&=x^4\cdot (8y^4-4y^3-15y^2+10y+1)+
          x^3\cdot (-4y^3+39y^2-10y-4)\\
          &+x^2\cdot (-15y^2-10y+6)
          +x\cdot (10y-4)+1
\end{align*}
\end{theorem}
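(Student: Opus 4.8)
The plan is to start from the explicit formula for $R(x,y)$ furnished by Theorem~\ref{thm:reducedg2} and push it through the coefficient-extraction machinery of Section~\ref{sec:extract}. Writing $R(x,y)=\frac{x^6y^2\cdot r(x,y)}{(1-x^2y)^{10}}$ and expanding the polynomial $r(x,y)$ monomial by monomial, I would first express $R(x,y)$ as an explicit finite linear combination of the basic rational functions $r_{i_1,i_2,10}(x,y)=\frac{x^{i_1}y^{i_2}}{(1-x^2y)^{10}}$ of \eqref{eq:rijk}. Each monomial $c\,x^ay^b$ of $r(x,y)$ contributes the single term $c\cdot r_{6+a,\,2+b,\,10}(x,y)$, so the third index is pinned at $i_3=10$ while $i_1=6+a$ ranges from $6$ up to $14$.

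By Theorem~\ref{thm:fromreduced}, $P(x,y)$ is obtained from $R(x,y)$ by substituting $(D(x,y)-1)/y$ for its first argument and multiplying by $\Delta(x,y)^{-1}$; this is exactly the operation \eqref{eq:pijk} that sends $r_{i_1,i_2,i_3}$ to $p_{i_1,i_2,i_3}$. I would therefore apply Proposition~\ref{prop:pijk} termwise, sorting the sum into the proposition's three cases. The monomials $55x^4y^2$ and $63x^4y^3$ give $i_1=i_3=10$, so the first line applies and produces clean $\Delta(x,y)^{-11}$ contributions. The monomials $1,\,14xy,\,24x^2y,\,21x^2y^2,\,91x^3y^2$ have $i_1<10$, so the second line introduces powers of $\bigl(1-(xy+x-1)\Delta(x,y)^{-1}\bigr)/2$ (from Lemma~\ref{lem:denominator}). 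The monomials $91x^5y^3,\,24x^6y^3,\,21x^6y^4,\,14x^7y^4,\,x^8y^4$ have $i_1>10$, so the third line introduces powers of $\bigl(-(xy+x-1)-\Delta(x,y)\bigr)/(2xy)$ coming from \eqref{eq:Dy}.

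Expanding these binomial factors and collecting like powers of $x$ and $y$ regroups the contributions into the intermediate expression displayed just before the theorem statement, in which $P(x,y)$ appears as a linear combination of $\Delta(x,y)^{-7}$, $\Delta(x,y)^{-9}$, and $\Delta(x,y)^{-11}$ with Laurent-polynomial coefficients. The concluding step is to reduce to the common denominator $\Delta(x,y)^{11}$: using $\Delta(x,y)^2=(x+xy-1)^2-4x^2y$ from \eqref{eq:discr} I would replace $\Delta(x,y)^{-9}$ by $\bigl((x+xy-1)^2-4x^2y\bigr)\,\Delta(x,y)^{-11}$ and $\Delta(x,y)^{-7}$ by $\bigl((x+xy-1)^2-4x^2y\bigr)^2\,\Delta(x,y)^{-11}$, then collect the numerator, factor out $x^6y^2$, and read off $p(x,y)$.

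The principal obstacle is purely computational rather than conceptual: although each step is mechanical, multiplying out the binomial factors from the second and third cases of Proposition~\ref{prop:pijk}, expanding the powers of $\bigl((x+xy-1)^2-4x^2y\bigr)$, and collecting the resulting bivariate polynomial is extremely lengthy and error-prone. This is precisely where a computer algebra system is indispensable; a hand computation, while possible in principle, would be unreasonably long. A natural internal check, which I would carry out, is to expand the final $P(x,y)$ as a power series in $x$ and compare the coefficients $p(n,k)$ against the values obtained by directly enumerating genus $2$ partitions of small sets, as reported in the paper.
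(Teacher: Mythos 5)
Your proposal follows exactly the paper's own route: expand $R(x,y)$ from Theorem~\ref{thm:reducedg2} into the terms $r_{i_1,i_2,10}(x,y)$, apply Proposition~\ref{prop:pijk} termwise with the same three-case split according to $i_1<10$, $i_1=10$, $i_1>10$, collect the result as a combination of $\Delta(x,y)^{-7}$, $\Delta(x,y)^{-9}$, $\Delta(x,y)^{-11}$, and then reduce everything to the single power $\Delta(x,y)^{-11}$ using $\Delta(x,y)^2=(x+xy-1)^2-4x^2y$, relying on Maple for the bookkeeping and on the same Taylor-coefficient comparison against direct enumeration as a check. This matches the paper's proof in both structure and detail, so it is correct as an outline of that argument.
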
  

Using (\ref{eq:Gessel2}) it is easy to extract the coefficient of
$x^ny^k$ from the formula stated in Theorem~\ref{thm:genus2gf}.
\begin{theorem}
\label{thm:pnk}  
The number $p(n,k)$ of genus $2$ partitions of $n$ elements with $k$
parts is given by 
\begin{align*}
  p(n,k)&=
  8\cdot\gamma(n-10, k-6)-4\cdot\gamma(n-10, k-5)-15\cdot\gamma(n-10, k-4)\\
  &+10\cdot\gamma(n-10, k-3)+\gamma(n-10, k-2)\\
  &-4\cdot\gamma(n-9, k-5)+39\cdot \gamma(n-9, k-4)-10\cdot\gamma(n-9,
  k-3)-4\cdot\gamma(n-9, k-2)\\
  &-15\cdot\gamma(n-8, k-4)-10\cdot\gamma(n-8, k-3)+6\cdot\gamma(n-8,
  k-2)\\
  &-4\cdot\gamma(n-7, k-2)+10\cdot\gamma(n-7, k-3)+\gamma(n-6, k-2)
\end{align*}  
Here
$$
\gamma(n,k)=\frac{\binom{n+10}{5}\binom{n+5}{k}\binom{n+5}{n-k}}{\binom{10}{5}}.
$$
\end{theorem}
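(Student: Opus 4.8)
The plan is to extract the coefficient of $x^ny^k$ directly from the closed formula of Theorem~\ref{thm:genus2gf}, using Gessel's expansion (\ref{eq:Gessel}) exactly as prepared in Section~\ref{sec:extract}. The entire argument is a coefficient computation; all of the analytic content is already packaged in (\ref{eq:Gessel}) and (\ref{eq:Gessel2}).

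First I would identify the denominator. By (\ref{eq:discr}) we have $\Delta(x,y)^2=(x+xy-1)^2-4x^2y$, and expanding the right-hand side gives $1-2x(1+y)+x^2(1-y)^2=x^2-2x^2y-2x+x^2y^2-2xy+1$. Hence the denominator appearing in Theorem~\ref{thm:genus2gf} is $\Delta(x,y)^{11}$, so that
$$
P(x,y)=\frac{x^6y^2\cdot p(x,y)}{\Delta(x,y)^{11}}.
$$

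Next I would specialize (\ref{eq:Gessel}) to $\alpha=11/2$, that is, $m=5$. Then (\ref{eq:Gessel2}) tells us that the coefficient of $x^ny^k$ in $\Delta(x,y)^{-11}$ is
$$
\frac{\binom{n+10}{5}\binom{n+5}{k}\binom{n+5}{n-k}}{\binom{10}{5}}=\gamma(n,k),
$$
which is precisely the quantity $\gamma(n,k)$ defined in the statement.

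The last step is pure bookkeeping. Multiplying $x^6y^2$ into $p(x,y)$ writes $P(x,y)$ as a linear combination of terms of the form $c_{a,b}\,x^ay^b\,\Delta(x,y)^{-11}$: concretely the $x^{10}$-part carries $8y^6-4y^5-15y^4+10y^3+y^2$, the $x^9$-part carries $-4y^5+39y^4-10y^3-4y^2$, the $x^8$-part carries $-15y^4-10y^3+6y^2$, the $x^7$-part carries $10y^3-4y^2$, and there remains the single term $x^6y^2$. Since $[x^ny^k]\,x^ay^bF(x,y)=[x^{n-a}y^{k-b}]F(x,y)$, each term $c_{a,b}x^ay^b\Delta(x,y)^{-11}$ contributes $c_{a,b}\,\gamma(n-a,k-b)$, and summing these contributions reproduces exactly the asserted formula for $p(n,k)$. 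The only possible obstacle is transcription: one must keep the shifts $n-a$, $k-b$ and the signs of the coefficients $c_{a,b}$ straight across the five groups of monomials. There is no genuine mathematical difficulty here, since the entire substance of the claim rests on the already-established identity (\ref{eq:Gessel2}).
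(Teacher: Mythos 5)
Your proposal is correct and follows exactly the paper's own route: the paper proves Theorem~\ref{thm:pnk} by noting that the denominator in Theorem~\ref{thm:genus2gf} is $\Delta(x,y)^{11}$ and invoking (\ref{eq:Gessel2}) with $m=5$ to read off the coefficient $\gamma(n,k)$ of $x^ny^k$ in $\Delta(x,y)^{-11}$, then shifting indices term by term through $x^6y^2\cdot p(x,y)$. Your bookkeeping of the monomials and their coefficients matches the stated formula, so there is nothing to add.
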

Using Maple, it is also possible to define $P(x,y)$ directly, by 
combining Theorem~\ref{thm:reducedg2} with
Theorem~\ref{thm:fromreduced}, and then use the computer algebra system
to directly compute its Taylor series. We had Maple compute the Taylor
series two ways: first by defining $P(x,y)$ by combining
Theorem~\ref{thm:reducedg2} with Theorem~\ref{thm:fromreduced} and then
by using the formula stated in Theorem~\ref{thm:genus2gf}. We obtained
the same coefficients, which also agree with the numbers obtained using
the formula  given in Theorem~\ref{thm:pnk}. 
The resulting numbers of partitions of genus $2$
may be found in Table~\ref{tab:pg2}, up to $n=12$.
 
\begin{table}[h]
\begin{center}
\begin{tabular}{|r||r|r|r|r|r|r|r|}
\hline
\backslashbox{$n$}{$k$} & $2$ & $3$ & $4$& $5$ & $6$ & $7$ &8\\  
\hline
\hline
$6$ & $1$ &&&&&&\\
$7$  &$7$ & $21$ &&&&&\\
$8$  &$28$ & $210$ & $161$&&&&\\
$9$  &$84$ & $1134$ & $2184$& $777$ &&&\\
$10$ &$210$ & $4410$ & $15330$& $13713$ & $2835$&&\\
$11$ &$462$ & $13860$ & $75075$& $121275$ & $63063$& $8547$&\\
$12$ &$924$ & $37422$ & $289905$ & $729960$ & $685608$ & $233772$ & $22407$\\
\hline
\end{tabular}
\end{center}
\caption{Numbers $p(n,k)$ of partitions of genus $2$ of $n$ elements with $k$ parts}
\label{tab:pg2}
\end{table}
These numbers agree with the numbers published in~\cite[Page, 48 Table 3.2]{Yip}
except for the values of $p(11,4)$ and $p(11,5)$, where
M. Yip's computer search has found $p(11,4)=75675$ and
$p(11,5)=110880$. Our exhaustive search for partitions of genus $2$ has
found the same numbers as published in our table.

\section*{Acknowledgments}
The second author wishes to express his heartfelt thanks to Labri, Universit\'e
Bordeaux I, for hosting him as a visiting researcher in Spring 2017,
when this research was started. This work was partially supported by a grant from the Simons Foundation
(\#245153 to G\'abor Hetyei).

\end{document}